\documentclass[11pt, english,a4paper]{amsart}

\usepackage[utf8]{inputenc}
\usepackage[T1]{fontenc}
\usepackage{color}
\usepackage{lmodern}
\usepackage{amsthm}
\usepackage[headings]{fullpage}
\usepackage{amssymb}
\usepackage{enumerate}
\usepackage{kantlipsum} 
\usepackage{tikz-cd}
\usepackage{stmaryrd}
\usepackage{tikz-cd}
\usepackage{hyperref}
\usepackage{mathrsfs}
\usepackage{bbold}

\usepackage[backend=biber, style=alphabetic, sorting=nyt]{biblatex}
\renewbibmacro{in:}{}

\calclayout

\newtheorem{theo}{Theorem}
\newtheorem{coro}[theo]{Corollary}
\newtheorem{lemm}[theo]{Lemma}

\newtheorem{prop}[theo]{Proposition}
\newtheorem{defi}[theo]{Definition}
\newtheorem{rema}[theo]{Remark}

\let\cal\mathcal

\let\hat\widehat
\let\tilde\widetilde
\let\phi\varphi

\let\epsilon\varepsilon

\def\Q{{\bf Q}} 
\def\Z{{\bf Z}}
\def\C{{\bf C}}
\def\N{{\bf N}}

\def\A{{\bf A}}

\def\B{{\bf B}}
\def\O{{\cal O}}
\def\G{{\cal G}}

\def\D{{\bf D}}

\def\id{{\mathrm{id}}}

\def\Mat{{\mathrm{Mat}}}

\def\coker{{\mathrm{coker}}}

\def\Zp{{\Z_p}}
\def\Qp{{\Q_p}}
\def\Cp{{\C_p}}
\def\OCp{{\O_{\Cp}}}

\def\At{{\tilde{\bf{A}}}}
\def\Atplus{{\tilde{\bf{A}}^+}}
\def\Bt{{\tilde{\bf{B}}}}
\def\Btplus{{\tilde{\bf{B}}^+}}

\def\Gal{{\mathrm{Gal}}}

\def\GL{{\mathrm{GL}}}

\def\cycl{{\mathrm{cycl}}}

\newcommand{\cont}{\mathrm{cont}}

\newcommand{\im}{\mathrm{im}\>}
\newcommand{\tgamma}{\tilde{\gamma}}
\newcommand{\ttau}{\tilde{\tau}}

\newcommand{\cE}{\mathcal{E}}

\newcommand{\Dt}{\tilde{\D}}

\author{Anand Chitrao}
\address{Ulsan National Institute of Science and Technology \\
Ulsan}
\email{anand@unist.ac.kr; anandchitrao@gmail.com}
\urladdr{https://sites.google.com/view/anandchitrao}

\author{Aditya Karnataki}
\address{Chennai Mathematical Institute\\
Chennai}
\email{adityack@cmi.ac.in; karnatakiaditya@gmail.com}
\urladdr{https://adityakarnataki.github.io/}

\author{Jishnu Ray}
\address{Harish-Chandra Research Institute \\
Prayagraj}
\email{jishnuray@hri.res.in; jishnuray1992@gmail.com}
\urladdr{https://sites.google.com/site/rayjishnu1992}

\date{\today}

\title{Explicit isomorphisms for a Herr-type complex over a metabelian extension}

\begin{document}

{\renewcommand\thefootnote{}\footnotetext{{\it Keywords:} Families of Galois representations, Galois cohomology, $(\varphi, \tau)$-modules}}
{\renewcommand\thefootnote{}\footnotetext{{\it 2020 Mathematics Subject Classification:} 11F80, 11S25, 14F30}}

\begin{abstract}
    Let $S$ be a Banach algebra over $\Qp$ whose residue fields are finite extensions of $\Qp$. Given an arithmetic family $V$ of Galois representations, i.e., a finite free $S$-module $V$ with a continuous action of the absolute Galois group of a $p$-adic number field, we construct a complex associated to $V$ over false-Tate extensions and construct explicit isomorphisms between its cohomology and the Galois cohomology. This recovers earlier results by Tavares Ribeiro when $S$ is a finite extension of $\Qp$.  
\end{abstract}

\maketitle

\section{Introduction}
\subsection{Notations:} Let $p$ be an odd prime. Let $K$ be a finite extension of $\Qp$, with ring of integers $\O_K$ and residue field $k$ and let $\pi$ be a uniformizer of $K$. We let $\overline{K}$ be an algebraic closure of $K$ and let $\C_p$ be the $p$-adic completion of $\overline{K}$ with ring of integers $\OCp$. Let $v_p$ be the $p$-adic valuation on $\Cp$ such that $v_p(p)=1$. { Let $v_{\C_{p}^{\flat}}$ be the valuation on $\C_{p}^{\flat}$ obtained using the isomorphism of monoids $ \C_{p}^{\flat} \simeq \underset{x \mapsto x^p}{\varprojlim}\Cp$ by $v_{\C_{p}^{\flat}}(x) = v_p(x^{\#})$}. Let $(\pi_n)$ be a sequence of elements of $\overline{K}$, such that $\pi_0 =\pi$ and $\pi_{n+1}^p = \pi_n$. We let $K_n = K(\pi_n)$ and $K_\infty = \bigcup_{n \geq 1}K_n$. Let also $\epsilon_1$ be a primitive $p$-th root of unity and $(\epsilon_n)_{n \in \N}$ be a compatible sequence of $p^n$-th roots of unity, which means that $\epsilon_{n+1}^p=\epsilon_n$ and let $K_{\mathrm{cycl}} = \bigcup_{n \geq 0}K(\epsilon_n)$ be the cyclotomic extension of $K$. Let $L:=K_\infty \cdot K_{\mathrm{cycl}}$ be the Galois closure of $K_\infty/K$, and let 
\[
G_\infty = \Gal(L/K), \quad H_\infty = \G_L = \Gal(\overline{K}/L), \quad \Gamma_K = \Gal(L/K_\infty).
\] Note that we can identify $\Gamma_K$ with $\Gal(K_{\mathrm{cycl}}/(K_\infty \cap K_{\mathrm{cycl}}))$ and so with an open subgroup of $\Z_p^\times$. 

For $g \in \G_K$ and for $n \in \N$, there exists a unique element $c_n(g) \in \Z/p^n\Z$ such that $g(\pi_n) = \epsilon_n^{c_n(g)}\pi_n$. Since $c_{n+1}(g) \equiv c_n(g) \mod p^n$, the sequence $(c_n(g))$ defines an element $c(g)$ of $\Zp$. The map $g \mapsto c(g)$ is actually a (continuous) $1$-cocycle of $\G_K$ to {$\Zp(1)$} (the rank $1$ module over $\Zp$ with $\G_K$ acting by the $p$-adic cyclotomic character $\chi_{\mathrm{cycl}}$), such that $c^{-1}(0) = \Gal(\overline{K}/K_{\infty})$, and satisfies for $g,h \in \G_K$~:
$$c(gh) = c(g)+\chi_{\mathrm{cycl}}(g)c(h).$$

This means that if $\Zp \rtimes \Z_p^{\times}$ is the semi-direct product of $\Zp$ with $\Z_p^{\times}$, where $\Z_p^{\times}$ acts on $\Zp$ by multiplication, then the map $g \in \G_K \mapsto (c(g),\chi_{\mathrm{cycl}}(g)) \in \Zp \rtimes \Z_p^{\times}$ is a morphism of groups with kernel $H_{\infty}$. The cocycle $c$ factors through $H_{\infty}$, and so defines a cocycle that we will still denote by $c~: G_\infty \to \Zp$, which is called Kummer's cocycle attached to $K_\infty/K$.

We let $\gamma$ be a topological generator of $\Gal(L / K_{\infty})$. We let $\tau$ be a topological generator of $\Gal(L/K_{\mathrm{cycl}})$ such that $c(\tau)=1$ (this is exactly the element corresponding to $(1,1)$ \textit{via} the isomorphism $g \in G_{\infty} \mapsto (c(g),\chi_{\mathrm{cycl}}(g)) \in \Zp \rtimes \Z_p^\times$). The relation between $\tau$ and $\Gamma_K$ is given by
\begin{eqnarray}\label{The monodromy identity}
    g\tau g^{-1} = \tau^{\chi_{\mathrm{cycl}}(g)} \text{ for all } g \in \Gamma_K.
\end{eqnarray}

We also let $H_K = \Gal(\overline{K}/K_{\mathrm{cycl}})$ and $H_{\tau,K} = \Gal(\overline{K}/K_\infty)$. If $A$ is an algebra endowed with an action of $\G_K$, we let $A_K = A^{H_K}$ and $A_{\tau,K} = A^{H_{\tau,K}}$.

We fix a continuous section to the canonical projection $\G_K \to G_{\infty}$ as follows. Choose an arbitrary lift $\tau' \in H_K$ of $\tau$. Then, the procyclic subgroup generated by $\tau'$ must have a factor isomorphic to $\Zp$ surjecting onto the copy of $\Zp$ generated by $\tau$ in $G_{\infty}$. Let $\ttau$ be the element in this $\Zp$ factor mapping to $\tau$. So we get a continuous group theoretic section to the canonical projection $H_K \to \langle \tau \rangle$. Next, choose a continuous section to the projection $H_{\tau, K} \to \langle \gamma \rangle$. Defining $\tilde{\gamma^a \tau^b} = \tilde{\gamma^a}\tilde{\tau^b}$ yields a continuous section to the surjection $\G_K \to G_{\infty}$. We note that for any $a \in \Zp$, $\tilde{\tau^a} = \ttau^a$.

{ If $\chi_n$ is a sequence of elements in $\Z_{> 0}$ converging to $\chi(\gamma)$ in $\Z^{\times}_p$, then for a topological $\Gal(L/K_{\cycl})$-module, the linear operators $1 + \tau + \cdots + \tau^{\chi_n - 1}$ converge to a linear operator $\dfrac{\tau^{\chi(\gamma)} - 1}{\tau - 1}$, which we denote by $\delta$. This operator is invertible and its definition is independent of the choice of $\chi_n$.}

The following diagram is a concise way of remembering these extensions and the Galois groups:
\[
    \begin{tikzcd}
        & \overline{K} & \\
        & L = K_\infty \cdot K_{\mathrm{cycl}} \ar[u, "H_\infty", no head, swap]& \\
        K_\infty \ar[ur, "\langle \gamma \rangle", no head]\ar[uur,"H_{\tau, K}",bend left=20,looseness= 1, no head] & & K_{\mathrm{cycl}} \ar[ul, "\langle \tau \rangle", no head, swap] \ar[uul, "H_K", bend right = 20, looseness = 1, no head, swap] \\
        & K \ar[ul, no head] \ar[ur, "\langle \gamma \rangle", no head] \ar[uu, "G_\infty", no head]&
    \end{tikzcd}
\]

Here are some conventions we adopt throughout this paper. If $G$ is a group and $M$ is a $G$-module, the class of a cocycle $c : G^i \to M$ in $H^i(G, M)$ is denoted by $[c]$. If $M_i$ for $i \in \{1, 2, \ldots, I\}$ and $N_j$ for $j \in \{1, 2, \ldots, J\}$ are modules with maps $a_{i, j} : N_j \to M_i$, then the map $\oplus_{j = 1}^{J} N_j \to \oplus_{i = 1}^{I} M_i$ will be denoted by the matrix $(a_{i, j})$ and applying this map should be thought of as multiplying the $I \times J$ matrix $(a_{i, j})$ with a $J \times 1$ column vector coming from the domain to get an $I \times 1$ column vector in the codomain.

\subsection{$(\varphi,\Gamma)$-modules} 
A central theme in $p$-adic Hodge theory is to classify $p$-adic representations of $\G_K$. To this end, Fontaine \cite{Fon90} introduced the category of $(\varphi, \Gamma)$-modules. Moreover, he constructed an equivalence $V \rightarrow D(V)$ between the category of $p$-adic representations of $\G_K$ and the category of \'{e}tale $(\varphi, \Gamma)$-modules 
over a local field $\B_K$, equipped with semi-linear, commuting actions of a Frobenius operator $\varphi$ and that of $\Gamma$, which is an open subgroup of $\Z_p^{\times}$.

Subsequently, $(\varphi, \Gamma)$-modules over variants of the ring $\B_K$ were constructed by Cherbonnier-Colmez \cite{CC98} and Kedlaya \cite{Ked04}. The category of \'etale $(\varphi, \Gamma)$-modules over any one of these rings is equivalent to the category of $p$-adic Galois representations. For an introduction, see the articles of Berger \cite{Ber04}, \cite{Ber11}. 
In \cite{herr1998cohomologie}, Herr defines a three-term complex of  $(\varphi, \Gamma)$-modules which computes the cohomology of the  Galois representation when the corresponding  $(\varphi, \Gamma)$-module is étale. 

Let $S$ be a $\Q_p$-Banach algebra such that for every $x$ in the maximal spectrum of $S$, $S/\mathfrak{m}_x$ is a finite extension of $\Q_p$. Let $V$ be a family of representations of $\G_K$ over $S$ as in section~\ref{Families of phi tau modules}. Associated to $V$ is a family of étale $(\varphi, \Gamma)$-modules $\D(V)$ (thanks to the work of Berger--Colmez~\cite{BC08} and Kedlaya--Liu~\cite{KL10}). 
Pottharst \cite{Pot13} showed that the same Herr's complex in the setting of families computes the Galois cohomology. 

The strategy of Pottharst is as follows. He first shows that the cohomology of $V$ as a representation of $H_K $ can be computed using the two-term complex
$\mathcal{C}_D^\bullet : \D(V) \xrightarrow{\varphi - 1} \D(V).
$
It is well-known that the cohomology of a $\Gamma$-module $M$ is computed using the two-term complex
$M \xrightarrow{\gamma - 1} M,$
where $\gamma$ is a generator of $\Gamma$.  One can then consider the $\gamma - 1$ map from the complex $\mathcal{C}_D^\bullet$ to itself. Using the Hochschild-Serre spectral sequence, he showed that the resulting total complex computes the Galois cohomology. We note that $H^2(\Gamma, V^{H_K}) = 0$. Therefore $H^1(\G_K, V)$ is sandwiched between $H^1(\Gamma, V^{H_K})$ and $H^1(H_K, V)^{\Gamma}$. Using this, one can produce an explicit isomorphism between $H^1(\G_K, V)$ and $H^1(\mathrm{Tot}(\mathcal{C}_D^\bullet \xrightarrow{\gamma - 1} \mathcal{C_D^\bullet}))$. The isomorphism between $H^2(\G_K, V)$ and $H^1(\Gamma, H^1(H_K, V))$ comes out for free from the Hochschild-Serre spectral sequence since $H^i(\Gamma, \_)$ and $H^i(H_K, \_)$ are zero for $i \geq 2$. It is easy to see that $H^1(\Gamma, H^1(H_K, V))$ is isomorphic to $H^2(\mathrm{Tot}(\mathcal{C}_D^\bullet \xrightarrow{\gamma - 1} \mathcal{C_D^\bullet}))$.

\subsection{$(\varphi,\tau)$-modules}

By the theory of Fontaine-Wintenberger, one notes that the absolute Galois group $H_{\tau, K}$ of the Kummer tower $K_{\infty}$ is isomorphic to that of a characteristic $p$ field. Using this observation, one may classify representations of $H_{\tau, K}$ using $\varphi$-modules. A naïve approach will be to hope to classify representations of $\G_K$ by first restricting to $H_{\tau, K}$, passing to the $\varphi$-module and then considering the  $\G_K/H_{\tau, K}$-action. This does not work since $H_{\tau, K}$ is not a normal subgroup of $\G_K$. One indeed considers the $\varphi$-module associated to the restriction of the Galois representation to $H_{\tau, K}$. The datum of the remaining $\G_K/H_{\tau, K}$-action can be obtained by studying the $G_{\infty}$-action on a base change of this $\varphi$-module. This gives rise to the notion of $(\varphi, \tau)$-modules. These were first studied by Tavares Ribeiro \cite{tavares2008phi} and Caruso~\cite{Car13} using slightly different approaches. (See section~\ref{(phi, tau)-section} for details.) In the particular case of semi-stable representations, these $(\varphi, \tau)$-modules
coincide with the notion of Breuil-Kisin modules and can thus be used to study
Galois deformation rings as in \cite{kisin2008potentially}, to classify semi-stable (integral) Galois representations as in \cite{Liu10}, and to study integral models of Shimura varieties as
in \cite{kisin2010integral}.

As in the case of classical $(\varphi, \Gamma)$-modules, one may ask if, for a representation $V$ of $\G_K$ defined over a finite extension of $\Q_p$, can one compute $H^i(\G_K, V)$ from a complex made out of the corresponding $(\varphi, \tau)$-module. Indeed, it was shown by Tavares Ribeiro in \cite{Rib11} that there is a four-term complex made out of a base change of the $(\varphi, \tau)$-module associated to $V$ which computes $H^i(\G_K, V)$. Subsequently, other papers \cite{Zha25, GZ24} also wrote down other three-term complexes made out of the corresponding $(\varphi, \tau)$-module all of which compute $H^i(\G_K, V)$. We note that \cite{GZ24} wrote down a three-term complex which computes the Galois cohomology for representations of $\G_K$ defined over finite extensions of $\Qp$ of which we were not aware while writing this article. It might be worthwhile to explore the consequences of their results in our relative setting of families, but we have not done so. In our paper, we use completely different techniques to work with representations defined over arbitrary $\Qp$-affinoid algebras as our focus is also on the explicit maps realizing the isomorphism of Galois cohomology with the cohomology of the complex stated in Theorem~\ref{thm:mainIntro}.

In this paper, we work with families $V$ of representations of $\G_K$, i.e., representations defined over general $\Qp$-affinoid algebras $S$. Our goal is to find a complex made out of the family of $(\varphi, \tau)$-modules associated to $V$ by \cite{KP23} which computes $H^i(\G_K, V)$. We indeed write down a complex and produce explicit maps between its cohomology groups and $H^i(\G_K, V)$. Furthermore, our complex specializes to that of Tavares Ribeiro when $S$ is a finite extension of $\Qp$.

\begin{theo}[see Theorem \ref{Overconvergent theorem}]\label{thm:mainIntro}
       
       Let $V$ be a family of representations of $\G_K$ and $\D^{\dagger, r}_{\tau, K}(V)$ the associated $(\varphi, \tau)$-module as in section~\ref{Families of phi tau modules} (Theorem~\ref{theo ana families}). For $r \geq r_0$, let $\D^{r}_L = S\hat{\otimes}\Bt^{\dagger, r}_L\otimes_{S \hat{\otimes}\B_{\tau, K}^{\dagger, r}} D_{\tau, K}^{\dagger, r}(V)$ (for a definition of these period rings, see section~\ref{Period rings}). Then, the cohomology of the complex
        {\small
        \[
            0 \to \D^{r}_L \xrightarrow{d_0 = \begin{pmatrix}\varphi - 1 \\ \gamma - 1 \\ \tau - 1\end{pmatrix}} \D^{pr}_L \oplus \D^{r}_L \oplus \D^{r}_L \xrightarrow{d_1 = \begin{pmatrix}\gamma - 1 & 1 - \varphi & 0 \\ \tau - 1 & 0 & 1 - \varphi \\ 0 & \tau - 1 & 1 - \delta^{-1}\gamma\end{pmatrix}} \D^{pr}_L \oplus \D^{pr}_L \oplus \D^{r}_L \xrightarrow{d_2 = (\tau - 1, 1 - \delta^{-1}\gamma, \varphi - 1)} \D^{pr}_L \to 0
        \]
        }
        is isomorphic to the Galois cohomology of $V$, where $\delta = \frac{\tau^{\chi(\gamma)} - 1}{\tau - 1}$.
    \end{theo}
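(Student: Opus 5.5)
The plan is to follow Pottharst's strategy, now with two layers of group cohomology coming from the metabelian group $G_\infty=\langle\tau\rangle\rtimes\langle\gamma\rangle$, and to reduce everything to Hochschild--Serre plus a $\varphi$-computation for $H_\infty$.

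\emph{Step 1: the $H_\infty$ part.} Show that the two-term complex $\D^r_L\xrightarrow{\varphi-1}\D^{pr}_L$ computes $H^i(H_\infty,V)$ for $i=0,1$ and has no higher cohomology. Concretely, I would first treat the $\varphi$-cohomology over $S\hat\otimes\Bt^{\dagger,r}_L$, using that $V\otimes S\hat\otimes\Bt^{\dagger}\cong \D^r_L\otimes\Bt^{\dagger}$ (the comparison from Theorem~\ref{theo ana families} after base change). I would then use the Artin--Schreier type exact sequence $0\to S\to S\hat\otimes\Bt^{\dagger,r}\xrightarrow{\varphi-1}S\hat\otimes\Bt^{\dagger,pr}\to0$, together with almost-étale descent / vanishing of $H^i(H_\infty,S\hat\otimes\Bt^{\dagger,r})$ for $i\ge1$, to identify $(\D^r_L)^{\varphi=1}=V^{H_\infty}$ and $\D^{pr}_L/(\varphi-1)\cong H^1(H_\infty,V)$. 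The explicit map sends $x$ to the cocycle $h\mapsto (h-1)y$, where $y\in V\otimes\Bt^{\dagger,r}$ solves $(\varphi-1)y=x$. The family setting enters only through the $S\hat\otimes$ versions of these rings, which I would take from section~\ref{Period rings}. This yields a quasi-isomorphism between $\cal C^\bullet=[\D^r_L\xrightarrow{\varphi-1}\D^{pr}_L]$ and $R\Gamma(H_\infty,V)$, compatible with the $G_\infty$-action.

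\emph{Step 2: the $G_\infty$ part.} For a $G_\infty$-module $M$, cohomology of $\langle\tau\rangle\cong\Zp$ is computed by $M\xrightarrow{\tau-1}M$. Since $\gamma\tau=\tau^{\chi(\gamma)}\gamma$, the compatible action of $\gamma$ on this complex is $\gamma$ on degree $0$ and $\delta^{-1}\gamma$ on degree $1$. Indeed, $(\tau-1)\gamma=\gamma(\tau^{\chi(\gamma)^{-1}}-1)$, and one checks $\delta^{-1}\gamma(\tau-1)=(\tau-1)\gamma$. Hence $R\Gamma(G_\infty,M)$ is the total complex of the square
\[
M\xrightarrow{\tau-1}M,\qquad M\xrightarrow{\gamma-1}M,\qquad M\xrightarrow{\delta^{-1}\gamma-1}M,
\]
where the first arrow is horizontal and the other two are the vertical maps on its two terms. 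This is exactly Tavares Ribeiro's computation, and I would prove it by Hochschild--Serre for $\langle\tau\rangle\triangleleft G_\infty$, with explicit cocycles $(g\mapsto\ldots)$ built from the section $\tgamma^a\ttau^b$ fixed in the introduction. Applying this termwise to $\cal C^\bullet$ gives a triple complex, a cube with $\varphi$, $\tau$, and $\gamma$ directions. Its totalization, after checking signs, is the displayed complex with differentials $d_0,d_1,d_2$; one verifies $d_1d_0=0$ and $d_2d_1=0$ using the commutation relations.

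\emph{Step 3: comparison.} Using Hochschild--Serre for $H_\infty\triangleleft\G_K$ and the spectral sequences of the cube filtered by the $\varphi$-degree, both sides are computed by spectral sequences with $E_2=H^a(G_\infty,H^b(H_\infty,V))$. It then remains to produce a morphism of complexes inducing the isomorphism; I would do this explicitly in each degree as Pottharst does, via cocycle formulas. In degree $1$, $H^1(\G_K,V)$ sits in the inflation-restriction sequence, and the cocycle attached to $(x,y,z)$ is written on $\tgamma^a\ttau^b$. In degrees $2$ and $3$, the sequence degenerates enough because $G_\infty$ has cohomological dimension $2$ and $H_\infty$ has dimension $\le1$ for these coefficients.

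\emph{Main obstacle.} The main obstacle is Step 1 in families: showing that $H^i(H_\infty,\,\cdot\,)$ of the overconvergent $S$-rings vanishes in positive degree and that $\varphi-1$ is surjective, uniformly in $r$, without a field of coefficients. I would try the Kedlaya--Liu / Pottharst approach: reduce to the perfect ring $\Bt^{\dagger,r}$, where Tate--Sen style normalized traces are not needed. I would use the exactness of $S\hat\otimes-$ for orthonormalizable Banach spaces, writing $S$ as a quotient of an orthonormalizable Banach module (or taking Schauder bases), so that vanishing statements for $\Qp$-coefficients transfer.
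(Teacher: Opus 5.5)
\textbf{Verdict: genuine gap.} Your Steps~1 and~2 match the paper's Propositions~\ref{Hinfty cohomology} and~\ref{Explicit Ginfty cohomology}: the Artin--Schreier sequence with $H_\infty$-acyclicity of $S\hat{\otimes}\Bt^{\dagger,r}$ (via $S\simeq\ell^0_\infty(I,\Qp)$), and Hochschild--Serre for $\langle\tau\rangle\subseteq G_\infty$ with $\gamma$ acting as $\delta^{-1}\gamma$ on $H^1(\langle\tau\rangle,-)$. Your totalization of the cube is also the paper's. The gap is Step~3, which is where the substance of the theorem lies.

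\textbf{Why Step~3 fails as written.} Isomorphic $E_2$-pages say nothing about the abutments unless the isomorphism comes from a map of filtered complexes, or is at least shown to commute with the differentials and to lift to the abutments. (Incidentally, to get $E_2^{a,b}=H^a(G_\infty,H^b(H_\infty,V))$ on the complex side you must take $\varphi$-cohomology first, i.e.\ filter by the $G_\infty$-degree, not the $\varphi$-degree.) Your claim that the spectral sequence ``degenerates enough'' because $\mathrm{cd}_p\,G_\infty\le 2$ and $\mathrm{cd}_p\,H_\infty\le1$ is false. The transgression $d_2\colon H^1(H_\infty,V)^{G_\infty}\to H^2(G_\infty,V^{H_\infty})$ survives and need not vanish. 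Consequently you only get
$0\to H^1(G_\infty,V^{H_\infty})\to H^1(\G_K,V)\to\ker d_2\to0$ and $0\to\coker(d_2)\to H^2(\G_K,V)\to H^1(G_\infty,H^1(H_\infty,V))\to0$.
On the complex side the same shapes appear, with $\delta_3$ of Proposition~\ref{Substitute for 6 term HS} in place of $d_2$. This is exactly why you cannot proceed ``as Pottharst does'': there $\Gamma$ has cohomological dimension~$1$, so $E_2^{2,0}=0$ and $H^2$ comes for free (see the remark after Theorem~\ref{thm:mainIntro}). It also means the real obstacle is here, not in Step~1.

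\textbf{What is missing.} You need actual comparison maps in degrees $1$ and $2$, together with a proof that they respect these extensions; in particular, that the transgression corresponds to $\delta_3$. The paper does this as follows.
\begin{enumerate}
\item It constructs $h_1$ (Proposition~\ref{Definition of h1}, Lemma~\ref{Triple is in ker d1}).
\item With considerably more work it constructs $h_2$: it normalizes a $2$-cocycle via Lemmas~\ref{cocycle value is 0 if second coordinate in H}--\ref{lem:c(h,g)}, shows the correction term $z$ is $H_\infty$-fixed, and proves independence of all choices.
\item It checks that every square of~\eqref{eq:comparing} commutes; the transgression-versus-$\delta_3$ square is the delicate one.
\item It concludes with the five lemma.
\end{enumerate}
Your one-line description of degree~$1$ and your silence on degree~$2$ leave all of this open.

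\textbf{An alternative you could pursue.} You could instead make your ``$G_\infty$-compatible quasi-isomorphism'' rigorous:
\begin{enumerate}
\item Resolve $V$ by $0\to V\to(S\hat{\otimes}\Bt^{\dagger,r})\otimes_SV\to(S\hat{\otimes}\Bt^{\dagger,pr})\otimes_SV\to0$, whose last two terms are $H_\infty$-acyclic.
\item Pass to $G_\infty$ by inflation.
\item Compare $C^\bullet_{\mathrm{cont}}(G_\infty,M)$ with the three-term complex by a chain map natural in $M$.
\end{enumerate}
A columnwise quasi-isomorphism of double complexes would then give the theorem without cocycle bookkeeping. But step~3 of this route is real work. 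The evaluation maps of Proposition~\ref{Explicit Ginfty cohomology} are compatible with the differentials only on cocycles: for a $1$-cochain $c$ one needs $c(\tau^{\chi(\gamma)^{-1}})=\frac{\tau^{\chi(\gamma)^{-1}}-1}{\tau-1}c(\tau)$. So you would have to build a genuine chain map, e.g.\ from a length-two free resolution of $\Zp$ over the completed group ring of $G_\infty$, and handle the accompanying continuity issues.
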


First, we make a few remarks. 
\begin{rema}
    The proof of Tavares Ribeiro (in the setting where $S$ is a finite extension of $\Qp$) starts with an embedding of the representation $V$ into a representation $\mathrm{Ind}\> V$ which has no cohomologies in positive degree. He then uses a dimension-shifting argument with the $(\varphi, \tau)$-modules associated with $V$ and $\mathrm{Ind}\> V$. This approach seems difficult in our setting since it seems difficult to adapt it to the theory of infinite dimensional families of representations of $\G_K$. Let alone infinite dimensional representations of $\G_K$, the construction of families of $(\varphi, \tau)$-modules corresponding to finite dimensional families of representations is non-trivial (see \cite[Theorem 48]{KP23}). 

\vspace{.2cm}

    Contrary to Pottharst's approach in the setting of families of $(\varphi, \Gamma)$-modules, the group $G_{\infty}$ has cohomolgies in degree $2$. This means that we have to work with $H^2(G_{\infty}, V^{H_{\infty}})$ even to prove that $H^1$ of the complex above is isomorphic to $H^1(\G_K, V)$. Moreover, the isomorphism for $H^2(\G_K, V)$ does not come out for free from the Hochschild-Serre spectral sequence, since again $H^2(G_{\infty}, V^{H_{\infty}})$ is possibly non-zero.

\vspace{.2cm}

\end{rema}

\noindent{\textit{Idea of the proof}:} 
    Analogous to Pottharst's method, we show that the cohomology of a family of representations of $\G_K$, when restricted to $H_{\infty}$, can be computed using the sequence $\D^{r} \xrightarrow{\varphi - 1} \D^{pr}$ (see Proposition~\ref{Hinfty cohomology}). Next, we show that if $M$ is a $G_{\infty}$-module, then the cohomology of $M$ can be computed using the three-term complex $M \xrightarrow{f_M} M \oplus M \xrightarrow{g_M}M$ (see Proposition~\ref{Explicit Ginfty cohomology}). 
    These explicit maps are essential in the proof of Theorem~\ref{thm:mainIntro}. Finally, we put these two together and consider the following complex
    \[
        \begin{tikzcd}
            \D^{pr} \ar[r, "f_{\D^{pr}}"] & \D^{pr}\oplus \D^{pr} \ar[r, "g_{\D^{pr}}"] & \D^{pr} \\
            \D^{r} \ar[r, "f_{\D^{r}}"]\ar[u, "\varphi - 1"] & \D^{r} \oplus \D^{r} \ar[r, "g_{\D^{r}}"] \ar[u, "(\varphi - 1)\oplus (\varphi - 1)"] & \D^{r} \ar[u, "\varphi - 1"].
        \end{tikzcd}
    \]
    Taking the total complex now yields the complex in Theorem~\ref{thm:mainIntro}.

    It remains to show that the cohomology of this complex is isomorphic to $H^i(\G_K, V)$. The case of $i = 0$ is easy and can be done separately. Next we treat the $i = 1$ and $2$ cases. The degeneration of the Hochschild-Serre spectral sequence yields the following exact sequence (see \cite[Pg. 71]{DHW12})
    \begin{eqnarray*}
        H^1(G_{\infty}, V^{H_{\infty}}) \hookrightarrow H^1(\G_K, V) \xrightarrow{} H^1(H_{\infty}, V)^{G_{\infty}} \xrightarrow{} H^2(G_{\infty}, V^{H_{\infty}})
        \xrightarrow{} H^2(\G_K, V) \twoheadrightarrow H^1(G_{\infty}, H^1(H_{\infty}, V)).
    \end{eqnarray*}
    We construct the following analogous exact sequence for $(\varphi, \tau)$-modules (see Proposition~\ref{Substitute for 6 term HS})
    \begin{eqnarray*}
         \dfrac{\ker g_{(\D^{r})^{\varphi = 1}}}{\im f_{(\D^{r})^{\varphi = 1}}} \hookrightarrow \dfrac{\ker d_1}{\im d_0} \to \left(\dfrac{\D^{pr}}{(\varphi - 1)\D^{r}}\right)^{\tau = 1, \gamma = 1} \to \dfrac{(\D^{r})^{\varphi = 1}}{(\tau - 1, 1 - \delta^{-1}\gamma)} \to \dfrac{\ker d_2}{\im d_1} \twoheadrightarrow \dfrac{\ker g_{(\D^{pr}/(\varphi - 1)\D^{r})}}{\im f_{(\D^{pr}/(\varphi - 1)\D^{r})}}.
    \end{eqnarray*}
    Then, using Propositions~\ref{Hinfty cohomology}, \ref{Explicit Ginfty cohomology}, we see that the first, third, fourth and sixth terms in the two sequences displayed above are isomorphic. The technical heart of this paper is to construct a homomorphism between the second terms as well as a homomorphism between the fifth terms in these two exact sequences which make the resulting diagram commute. Applying the five lemma then shows that these homomorphisms are indeed isomorphisms.
    
\vspace{.2cm}

    \textit{Technicalities:}
    We emphasize that it takes some effort even to define a map $h_2 : H^2(\G_K, V) \to \dfrac{\ker d_2}{\im d_1}$. Given a class $\alpha \in H^2(\G_K, V)$, we choose lifts $x, y$ to $\D^{pr}$ of the two coordinates of the image of $\alpha$ under the composition of $H^2(\G_K, V) \to H^1(G_{\infty}, H^1(H_{\infty}, V)) \simeq \dfrac{\ker g_{(\D^{pr}/(\varphi - 1)\D^{r})}}{\im f_{(\D^{pr}/(\varphi - 1)\D^{r})}}$. We show that these lifts determine a representative $c$ of $\alpha$ almost completely (see Lemmas~\ref{Existence of a c}, \ref{Second coordinate only matters mod H}, and \ref{lem:c(h,g)}). This is sufficient for us to prove

     \begin{prop}
        Let $s^{(pr)}$ be a section to the map $\varphi - 1 : (S \hat{\otimes}_{\Qp} \Bt^{\dagger, r}) \otimes_S V \to (S \hat{\otimes}_{\Qp} \Bt^{\dagger, pr}) \otimes_S V$ fixed at the end of Section~\ref{Cohomology of Hinfty modules}. Then, for $c : \G_K \times \G_K \to V$ obtained using Lemma~\ref{Existence of a c}, the element $z = c(\tgamma, \tgamma^{-1}\ttau\tgamma) - c(\ttau, \tgamma) - (\ttau - 1)s^{(pr)}x + \tgamma s^{(pr)}\left(\frac{\tau^{\chi(\gamma)^{-1}} - 1}{\tau - 1}y\right) - s^{(pr)}y \in (S \hat{\otimes}_{\Qp} \Bt^{\dagger, r}) \otimes_S V$ belongs to $\D^{r}$, i.e., it is $H_{\infty}$-fixed.
        Furthermore, the map
        \begin{eqnarray*}
            h_2 : H^2(\G_K, V) & \to & \frac{\ker d_2}{\im d_1} \\
            \alpha & \mapsto & \Big(-x, -y, -z\Big) \mod \im d_1
        \end{eqnarray*}
        is independent of the choices of $x$, $y$ and $c$, and is an $S$-linear homomorphism.
    \end{prop}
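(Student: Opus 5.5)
The plan has three steps: show that $z$ is $H_\infty$-invariant, show that $(-x,-y,-z)$ lies in $\ker d_2$, and show that its class modulo $\im d_1$ is independent of all choices. Linearity then follows from the construction.

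\textbf{Invariance of $z$.} Take $h \in H_\infty$ and compute $(h-1)z$ using the cocycle identity
\[
(g_1 c)(g_2,g_3) - c(g_1g_2,g_3) + c(g_1,g_2g_3) - c(g_1,g_2) = 0,
\]
which we write as $g_1 c(g_2,g_3) - c(g_1g_2,g_3) + c(g_1,g_2g_3) - c(g_1,g_2)=0$. We apply it to triples mixing $h$ with $\tgamma$, $\ttau$ and $\tgamma^{-1}\ttau\tgamma$. This moves $h$ past $c(\tgamma,\tgamma^{-1}\ttau\tgamma)$ and $c(\ttau,\tgamma)$, at the cost of terms of the form $c(h', g)$ and $c(g,h')$ with $h' \in H_\infty$. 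Here we use that $H_\infty$ is normal, so for instance $h\ttau = \ttau h'$ with $h' = \ttau^{-1}h\ttau \in H_\infty$.

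Lemmas~\ref{Existence of a c}, \ref{Second coordinate only matters mod H} and \ref{lem:c(h,g)} pin down these terms in terms of $s^{(pr)}x$ and $s^{(pr)}y$. Concretely, $c(h,\cdot)$ restricted to $H_\infty$ (or $c(\cdot,h)$) is expressed via the $H_\infty$-cocycles $h\mapsto (h-1)s^{(pr)}x$ and $h\mapsto (h-1)s^{(pr)}y$ attached to $x,y$ through Proposition~\ref{Hinfty cohomology}. The correction terms in $z$ are designed exactly to cancel these contributions:
\begin{itemize}
\item $-(\ttau-1)s^{(pr)}x$ absorbs the $x$-contribution;
\item $\tgamma s^{(pr)}\bigl(\frac{\tau^{\chi(\gamma)^{-1}}-1}{\tau-1}y\bigr)$ absorbs the contribution from $\tgamma^{-1}\ttau\tgamma = \ttau^{\chi(\gamma)^{-1}}$ modulo $H_\infty$, via \eqref{The monodromy identity};
\item $-s^{(pr)}y$ absorbs the remaining term.
\end{itemize}
After the cancellation, $(h-1)z=0$. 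This bookkeeping is the main obstacle. My first attempt would be to reduce to the case where $c$ restricted to $H_\infty\times\G_K$ has the normalized form given by Lemma~\ref{lem:c(h,g)}, and then expand term by term.

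\textbf{Membership in $\ker d_2$.} We need
\[
(\tau-1)(-x) + (1-\delta^{-1}\gamma)(-y) + (\varphi-1)(-z) = 0 .
\]
Since $\varphi$ commutes with $\G_K$ and $(\varphi-1)s^{(pr)} = \id$, applying $\varphi-1$ to $z$ kills the cocycle terms up to the evaluation of $(\varphi-1)c$. The cocycle $c$ from Lemma~\ref{Existence of a c} satisfies $(\varphi-1)c = \partial(\text{something explicit})$ in terms of $x,y$, or vanishes on the relevant pairs. From this, $(\varphi-1)z$ reduces to
\[
-(\tau-1)x + \tgamma\,\tfrac{\tau^{\chi(\gamma)^{-1}}-1}{\tau-1}y - y .
\]
Using $\gamma\frac{\tau^{\chi(\gamma)^{-1}}-1}{\tau-1} = \delta^{-1}\gamma$, which follows from \eqref{The monodromy identity}, this gives the required relation. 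The condition $g(\bar x,\bar y)=0$ modulo $(\varphi-1)$ is used implicitly here.

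\textbf{Independence of choices and linearity.} Changing $c$ by a coboundary $\partial b$ that is compatible with the lemmas changes $z$ by
\[
(\tgamma-1)b(\tgamma^{-1}\ttau\tgamma) - \ldots ,
\]
which can be written as a component of $d_1$ applied to an element built from $b(\ttau)$, $b(\tgamma)$ and $(\varphi-1)$-corrections. Lemma~\ref{Second coordinate only matters mod H} guarantees that only the classes of these values modulo $H_\infty$-invariance matter.

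Changing the lifts $x\mapsto x+(\varphi-1)a$ and $y\mapsto y+(\varphi-1)b$ with $a,b\in\D^r$ changes $s^{(pr)}x$ by $a$ plus an element of $\ker(\varphi-1)$ (by uniqueness up to $\ker(\varphi-1)$), and similarly for $y$. Then $(x,y,z)$ changes by $d_1(a,b,\ast)$, and the $\ker(\varphi-1)$ ambiguity is absorbed into the freedom in choosing $c$.

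Changing $\alpha$ within its class is handled by the same coboundary argument. $S$-linearity follows because every choice can be made additively: sums of lifts and sums of cocycles are valid choices, and $s^{(pr)}$ is $S$-linear.
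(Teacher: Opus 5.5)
\textbf{Verdict: genuine gap.} Your overall plan can work, but the step that the class is independent of the choice of $c$ is missing its key idea, and the change-of-lifts step is not actually computed.

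\textbf{What is fine.} Your check that $(-x,-y,-z)\in\ker d_2$ is correct, and the paper never spells it out. Note that $c$ is $V$-valued, so $(\varphi-1)c=0$ outright, and $\gamma\frac{\tau^{\chi(\gamma)^{-1}}-1}{\tau-1}=\delta^{-1}\gamma$.

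Your $H_\infty$-invariance step names the right tools but is only announced. The paper makes it a short computation. For arbitrary $g_1,g_2$ it sets $z_{g_1,g_2}=c(g_1,g_2)+s^{(pr)}A(g_1)-s^{(pr)}A(g_1g_2)+g_1s^{(pr)}A(g_2)$, where $A(\tgamma^a\ttau^bh)=\frac{\gamma^a-1}{\gamma-1}x+\gamma^a\frac{\tau^b-1}{\tau-1}y$. It then checks $(h-1)z_{g_1,g_2}=0$ using $(h-1)s^{(pr)}A(g)=-c(h,g)$ (Lemma~\ref{lem:c(h,g)}), two cocycle identities and Lemma~\ref{Second coordinate only matters mod H}. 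Finally it observes that $z=z_{\tgamma,\tgamma^{-1}\ttau\tgamma}-z_{\ttau,\tgamma}$.

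\textbf{The gap: independence of $c$.} Suppose $c_2=c_1+\partial f$ (your $\partial b$) are both admissible for the same $x,y$. Then
$z_2-z_1=-(\ttau-1)f(\tgamma)+\tgamma f(\tgamma^{-1}\ttau\tgamma)-f(\ttau)$.

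\begin{enumerate}
\item The values $f(\tgamma),f(\ttau)\in V$ are in general \emph{not} $H_\infty$-invariant. So they are not in $\D^r$ and cannot be fed into $d_1$.
\item Since $\tgamma^{-1}\ttau\tgamma=\ttau^{\chi(\gamma)^{-1}}h_0$ with $h_0\in H_\infty$, the term $f(h_0)$ also appears.
\item Admissibility only forces $f|_{H_\infty}$ to be a $1$-cocycle, and its class in $H^1(H_\infty,V)^{G_\infty}$ can be nonzero. Lemma~\ref{Second coordinate only matters mod H} says nothing about this.
\end{enumerate}

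The missing step is as follows. Write $f(h)=(h-1)W$ with $W=s^{(pr)}u+v_0$, where $u\in\D^{pr}$ represents $\eta^{(pr)}([f|_{H_\infty}])$ and $v_0\in V$. Then $e(g):=(g-1)W-f(g)$ is $H_\infty$-invariant and factors through $G_\infty$. The elements $y_u:=e(\tgamma)$ and $z_u:=e(\ttau)$ lie in $\D^r$ and satisfy $(\varphi-1)y_u=(\gamma-1)u$ and $(\varphi-1)z_u=(\tau-1)u$. One finds $z_2-z_1=(\tau-1)y_u+(1-\delta^{-1}\gamma)z_u$, so $(-x,-y,-z)$ changes by $-d_1(u,y_u,z_u)$.

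The first slot $u$ is nonzero in general. The change is $(0,0,-t)$ with $t$ a representative of $\delta_3(u)$. This $t$ need not lie in $(\tau-1)(\D^r)^{\varphi=1}+(1-\delta^{-1}\gamma)(\D^r)^{\varphi=1}$, so no argument producing $d_1(0,\ast,\ast)$ from $f(\tgamma),f(\ttau)$ can succeed.

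The paper packages exactly this differently. The class $[\partial f]\in H^2(G_\infty,V^{H_\infty})$ dies under $\mathrm{inf}$, hence is $\mathrm{tr}$ of some class. Commutativity of the third square of \eqref{eq:comparing} together with $\delta_4\circ\delta_3=0$ then kills the difference.

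\textbf{Change of lifts.} The correct outcome is that $(-x,-y,-z)$ changes by $d_1(0,a,b)$, not by $d_1(a,b,\ast)$: the first slot of $d_1$ feeds $(\gamma-1)$ and $(\tau-1)$ into the first two components. This only follows after you exhibit an explicit admissible $c_2=c_1+\partial f$ for the new lifts and compute $z_2-z_1$. The paper builds $f(\tgamma^m\ttau^n)$ from $s^{(pr)}(\varphi-1)B-B$, where $B=\frac{\gamma^m-1}{\gamma-1}a+\gamma^m\frac{\tau^n-1}{\tau-1}b$. Saying the ambiguity is ``absorbed into the choice of $c$'' does not by itself tell you how $z$ moves.
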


    \begin{rema}
        If we specialize Theorem~\ref{thm:mainIntro} to the case where $S$ is a finite extension of $\Qp$, we get another proof of the result of Tavares Ribeiro, \cite[Theorem 0.2]{Rib11}.
    \end{rema}

    The theory of cohomology of $(\varphi, \Gamma)$-modules via explicit complexes was established in the case of the cyclotomic tower by Herr~\cite{herr1998cohomologie}. Ruochuan Liu established fundamental properties of these complexes and the cohomology in \cite{Liu08}. Kedlaya-Pottharst-Xiao generalized these to the case of families in \cite{KPX01}. All of these results have found numerous applications in related areas of number theory. In a similar manner, it is thus desirable to establish foundational properties of the cohomology of families of $(\varphi, \tau)$-modules. These would then have similar applications in number theory; Theorem \ref{thm:mainIntro} should be considered as a first step in this direction.

\section{Preliminaries}
In this section, we collect some preliminary definitions and results that will be used throughout the paper.
\subsection{Period rings}\label{Period rings}
Recall the classical rings from, e.g., \cite{Ber04}
\[
\At = W(\C_p^{\flat}),  \quad\Atplus = W(\O_{\Cp}^{\flat}), \quad \Bt = \At[1/p] \quad \textrm{and } \Btplus = \Atplus[1/p].
\]
These rings are equipped with a Frobenius operator $\phi$, and with a $\G_\Qp$-action commuting with $\varphi$ which lifts the action of $\G_{\Qp}$ on $\C_p^{\flat}$. We endow these rings with the weak topology, for instance, this is the topology on $\At$ for which the monoid isomorphism $\At \simeq (\C_p^{\flat})^{\N}$ is a homeomorphism with the valuation topology on $\C_p^{\flat}$.

For $r > 0$, we define the subset of overconvergent elements of radius $r$ of $\Bt$, by
\[
    \Bt^{\dagger,r} := \left\{x = \sum_{n \gg -\infty}p^n[x_n] \textrm{ such that } \lim\limits_{n \to +\infty}v_{\C_p^{\flat}}(x_n)+\frac{pr}{p-1}n =+\infty \right\}
\]
and we let $\Bt^\dagger = \bigcup_{r > 0}\Bt^{\dagger,r}$ be the subset of all overconvergent elements of $\Bt$. For properties of these rings, see \cite{Ber04}. We provide $\Bt^{\dagger}$ with the subspace topology coming from $\Bt$.

In this paper, we need the ``$\tau$''-versions of these rings. See \cite[Section $1$]{KP23} for the definitions and properties of $\A_{\tau, K}$, $\B_{\tau, K}$, $\B_{\tau, K}^{\dagger, r}$ and $\B_{\tau, K}^{\dagger}$. In particular, note that $\B_{\tau, K} := \A_{\tau, K}[1/p]$, $\B_{\tau, K}^{\dagger, r} = \B_{\tau, K} \cap \Bt^{\dagger, r}$ and $\B_{\tau, K}^{\dagger} = \cup_{r > 0}\B_{\tau, K}^{\dagger, r}$. For a ring $R$, following \cite{Pot13}, the superscript $(\>, r)$ denotes $R$ or $R^{r}$. We put the subscript $L$ in order to define the $H_{\infty}$-fixed points of rings such as $\Bt_L := (\Bt)^{H_{\infty}}, \Bt^{\dagger}_L := (\Bt^{\dagger})^{H_{\infty}}$. These two rings get the subspace topology from $\Bt$ and $\Bt^{\dagger}$, respectively.
The Frobenius on $\Bt$ defines, by restriction, endomorphisms of $\A_{\tau,K}$ and $\B_{\tau,K}$.

To define the functor which associates a $(\varphi, \tau)$-module with a Galois representation, we need some more fields. Let $\widehat\B_{\tau, K}^{\mathrm{ur}}$ and $\widehat\B_{\tau, K}^{\mathrm{ur}, \dagger, r}$ be the $p$-adic
completions of the maximal unramified extensions of $\B_{\tau, K}$ and $\B_{\tau, K}^{\dagger, r}$, respectively. The former is denoted by $\widehat\cE^{\mathrm{ur}}$ in \cite[Section $2.1$]{GL20} (see also Section $2.5$ of loc. cit. for the overconvergent period rings).

\subsection{More on the theory of $(\varphi, \tau)$-modules}\label{(phi, tau)-section}
Caruso has defined $(\varphi, \tau)$-modules in \cite{Car13} for an odd prime $p$. His definition of $(\varphi, \tau)$-modules was modified by Gao and Liu in \cite[Definition 2.1.5]{GL20} to include the prime $p = 2$. These two definitions are the same if $p \neq 2$. Even if we assume that $p$ is odd, we work with the latter definition.

A $\phi$-module $D$ on $\B_{\tau,K}$ is a $\B_{\tau,K}$-vector space of dimension $d$, equipped with a $\varphi$-semilinear operator $\varphi_D$, such that the induced map 
$1 \otimes \varphi_D : \B_{\tau, K} \otimes_{\varphi, \B_{\tau, K}}D \to D$ is an isomorphism, and we say that it is étale if there exists a basis of $D$ in which $\Mat(\phi_D) \in \GL_d(\A_{\tau,K})$. Then, we may define $(\varphi, \tau)$-modules as follows.

\begin{defi}
\label{def phitau}
A $(\phi,\tau)$-module on $(\B_{\tau,K},\Bt_L)$ is a triple $(D,\phi_D,G)$, where
\begin{enumerate}
\item $(D,\phi_D)$ is a $\phi$-module on $\B_{\tau,K}$,
\item $G$ is \textit{the datum of} a continuous (for the weak topology) semilinear $G_\infty$-action on $M:=\Bt_L \otimes_{\B_{\tau,K}}D$ such that $G_{\infty}$ commutes with $\phi_M:=\phi_{\Bt_L}\otimes \phi_D$, i.e., for all $g \in G_\infty$, we have $g\phi_M = \phi_Mg$,
\item regarding $D$ as a sub-$\B_{\tau,K}$-module of $M$, $D \subset M^{H_{\tau,K}}$.
\end{enumerate}
We say that a $(\phi,\tau)$-module is étale if its underlying $\phi$-module over $\B_{\tau,K}$ is \'etale. Let the category of \'etale $(\varphi, \tau)$-modules over $(\B_{\tau, K}, \Bt_L)$ be denoted by $\mathrm{Mod}_{(\varphi, \tau)}^{\mathrm{\'et}}$.
\end{defi}

Let $\mathrm{Rep}_{\Qp}(\G_K)$ be the category of $\Qp$-linear, finite dimensional representations of $\G_K$ and suppose $V$ is in $\mathrm{Rep}_{\Qp}(\G_K)$. Then $(\widehat{\cE^{\mathrm{ur}}} \otimes_{\Qp} V)^{H_{\tau, K}}$ is a finite dimensional vector space over $\B_{\tau, K}$ with a $\varphi$-action induced from $\varphi \otimes \id$ on $\widehat{\cE^{\mathrm{ur}}}\otimes_{\Qp}V \subseteq \Bt \otimes_{\Qp}V$. The association $V \mapsto (\widehat{\cE^{\mathrm{ur}}} \otimes_{\Qp} V)^{H_{\tau, K}}$ defines a functor denoted by $D_{\tau, K}: \mathrm{Rep}_{\Qp}(\G_K) \to \mathrm{Mod}_{(\varphi, \tau)}^{\'et}$. Then, the following result was first proved by Caruso (see \cite[Th\'eor\`eme 1]{Car13}) and later extended to include $p = 2$ by Gao and Liu (see \cite[Proposition $2.17$]{GL20}).

\begin{theo}
    The functor $D_{\tau, K}: \mathrm{Rep}_{\Qp}(\G_K) \to \mathrm{Mod}_{(\varphi, \tau)}^{\'et}$ is an equivalence of categories.
\end{theo}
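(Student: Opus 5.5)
The plan is to reduce to the classical Fontaine equivalence for the Kummer tower and then show that the extra datum $G$ in Definition~\ref{def phitau} is exactly what is needed to recover the action of the full group $\G_K$.

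\textbf{Step 1: the equivalence over $H_{\tau,K}$.} By Fontaine--Wintenberger, $H_{\tau,K}$ is the absolute Galois group of the field of norms of $K_\infty/K$. That field is the residue field of $\B_{\tau,K}$, and $\widehat{\cE^{\mathrm{ur}}}$ plays the role of the ring $\B$ of the classical theory. Fontaine's argument \cite{Fon90} then applies verbatim and gives a quasi-inverse pair of functors between $\mathrm{Rep}_{\Qp}(H_{\tau,K})$ and étale $\varphi$-modules over $\B_{\tau,K}$:
\[
V \mapsto (\widehat{\cE^{\mathrm{ur}}}\otimes_{\Qp} V)^{H_{\tau,K}}, \qquad D \mapsto (\widehat{\cE^{\mathrm{ur}}}\otimes_{\B_{\tau,K}} D)^{\varphi=1}.
\]
It also gives the comparison isomorphism $\widehat{\cE^{\mathrm{ur}}}\otimes_{\B_{\tau,K}} D \simeq \widehat{\cE^{\mathrm{ur}}}\otimes_{\Qp} V$, which is compatible with $\varphi$ and with $H_{\tau,K}$. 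Base changing this isomorphism to $\Bt$ shows that it is also compatible with the natural $\G_K$-action on $\Bt\otimes_{\Qp}V$.

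\textbf{Step 2: $D_{\tau,K}(V)$ is a $(\varphi,\tau)$-module.} Put $D=D_{\tau,K}(V)$ and $M=\Bt_L\otimes_{\B_{\tau,K}} D$. The key claim is
\[
M \simeq (\Bt\otimes_{\Qp}V)^{H_\infty}.
\]
The inclusion $M\subseteq(\Bt\otimes_{\Qp} V)^{H_\infty}$ follows from Step 1. For equality, pick a basis of $D$. Then $(\Bt\otimes_{\Qp} V)^{H_\infty}$ is the $\Bt_L$-span of this basis, because $\Bt\otimes_{\Qp}V\simeq \Bt\otimes_{\B_{\tau,K}}D$ has an $H_\infty$-fixed basis and $\Bt^{H_\infty}=\Bt_L$ by definition.

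Via this identification, $G_\infty=\G_K/H_\infty$ acts semilinearly on $M$. The action commutes with $\varphi$ because $\G_K$ commutes with $\varphi$ on $\Bt\otimes V$. Continuity for the weak topology is inherited from the continuity of $\G_K$ on $\Bt$. Finally, $D\subset M^{H_{\tau,K}}$ holds by construction.

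\textbf{Step 3: quasi-inverse and full faithfulness.}
\begin{itemize}
\item Given an étale $(D,\varphi_D,G)$, set $V=(\widehat{\cE^{\mathrm{ur}}}\otimes D)^{\varphi=1}$; this is a representation of $H_{\tau,K}$.
\item Let $\G_K$ act on $\Bt\otimes_{\Bt_L}M$ diagonally, through the given action on $\Bt$ and through $\G_K\to G_\infty$ on $M$. This action commutes with $\varphi$.
\item Using $\Bt\otimes_{\Bt_L}M=\Bt\otimes_{\B_{\tau,K}}D\simeq \Bt\otimes_{\Qp}V$ and $\Bt^{\varphi=1}=\Qp$, we get $(\Bt\otimes_{\Bt_L} M)^{\varphi=1}=V$.
\item Hence $V$ is $\G_K$-stable, and the restriction of this $\G_K$-action to $H_{\tau,K}$ agrees with the original one. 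This uses condition (3) of Definition~\ref{def phitau}.
\end{itemize}
The two constructions are mutually inverse by Steps 1 and 2. For full faithfulness, a morphism of representations is the same as an $H_{\tau,K}$-equivariant map, i.e.\ a $\varphi$-morphism $D\to D'$ (Step 1), that is moreover $\G_K$-equivariant. The latter condition is equivalent to the induced map $M\to M'$ being $G_\infty$-equivariant.

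\textbf{Main obstacle.} I expect the hardest part to be the descent in Step 2, namely identifying $(\Bt\otimes V)^{H_\infty}$ with $\Bt_L\otimes_{\B_{\tau,K}}D$, together with continuity of the induced $G_\infty$-action. If the direct basis argument turns out to be delicate because the map $\B_{\tau,K}\to\Bt$ is not faithfully flat in the naive sense, I would instead first prove $H^1(H_\infty,\GL_d(\Bt))=1$. This is a Hilbert 90 statement: reduce modulo $p$ to $\GL_d(\C_p^\flat)$, where it is the classical result, and then lift by successive approximation.
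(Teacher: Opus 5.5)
The paper does not prove this statement itself; it cites Caruso \cite[Th\'eor\`eme 1]{Car13} and Gao--Liu \cite[Proposition 2.17]{GL20}. Your argument is correct and is essentially the standard proof in Gao--Liu's formulation with the full $G_\infty$-action: Fontaine's equivalence for $H_{\tau,K}$ via the field of norms, then descent along $H_\infty$ using an $H_\infty$-fixed basis of $D$ to get $M\simeq(\Bt\otimes_{\Qp}V)^{H_\infty}$, and conversely $V=(\Bt\otimes_{\Bt_L}M)^{\varphi=1}$ with the diagonal $\G_K$-action. Two small remarks: your ``main obstacle'' is not actually one, since your basis argument already settles the descent without any Hilbert 90 fallback, and the one check you leave implicit is continuity of the reconstructed $\G_K$-action on $V$ in Step 3, which follows routinely from continuity of $G$ on $M$ and of $\G_K$ on $\Bt$.
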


There is another version of the theory of $(\varphi, \tau)$-modules, namely the overconvergent ones. An \'etale $(\varphi, \tau)$-module over $(\B_{\tau, K}^{\dagger}, \Bt_{L}^{\dagger})$ is defined similarly, and the category of these modules is denoted by $\mathrm{Mod}_{(\varphi, \tau)}^{\dagger,  \'et}$. For $r > 0$, define $D_{\tau, K}^{\dagger, r}(V) = (\widehat{\B}_{\tau, K}^{\mathrm{ur}, \dagger, r} \otimes_{\Qp} V)^{H_{\tau, K}}$ and $D_{\tau, K}^{\dagger}(V) = \varinjlim_{r > 0} D_{\tau, K}^{\dagger, r}(V)$. We say that the $(\varphi, \tau)$-module $D_{\tau, K}(V)$ is overconvergent, if $D_{\tau, K}(V) \simeq \B_{\tau, K} \otimes_{\B_{\tau, K}^{\dagger}} D_{\tau, K}^{\dagger}(V)$. Then Gao-Liu \cite[Theorem $2.5.2$]{GL20} and Gao-Poyeton \cite[Theorem $1.1.2$]{GP21} prove the following:

\begin{theo}
    The functor $D_{\tau, K}^{\dagger} : \mathrm{Rep}_{\Qp}(\G_K) \to \mathrm{Mod}_{(\varphi, \tau)}^{\dagger, \'et}$ is an equivalence of categories.
\end{theo}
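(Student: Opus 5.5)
The plan is to reduce the statement to two inputs: the equivalence $D_{\tau,K}$ of the preceding theorem, and a descent statement for étale $\varphi$-modules from $\B_{\tau,K}$ to $\B_{\tau,K}^{\dagger}$. The only place where genuine $p$-adic Hodge theory enters is the compatibility of the $G_\infty$-action with overconvergence.

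\emph{Step 1: descent for the $\varphi$-module.} Restrict $V$ to $H_{\tau,K}$. Since $\B_{\tau,K}^{\dagger}$ is a henselian discretely valued field with the same residue field as its completion $\B_{\tau,K}$, finite étale extensions of $\B_{\tau,K}$ descend to $\B_{\tau,K}^\dagger$. The $p$-adic completion of the maximal unramified extension of $\B_{\tau,K}^{\dagger}$ is $\widehat\B^{\mathrm{ur}}_{\tau,K}$. The standard argument (as in Cherbonnier--Colmez, or Kedlaya's comparison of étale $\varphi$-modules over $\cE^\dagger$ and $\cE$) then gives the following facts.
\begin{enumerate}
\item $D^\dagger_{\tau,K}(V)$ is an étale $\varphi$-module over $\B^\dagger_{\tau,K}$ of rank $\dim V$.
\item The map $\B_{\tau,K}\otimes D^\dagger_{\tau,K}(V)\to D_{\tau,K}(V)$ is an isomorphism.
\item There is a comparison isomorphism $\widehat\B^{\mathrm{ur},\dagger}_{\tau,K}\otimes D^\dagger_{\tau,K}(V)\simeq \widehat\B^{\mathrm{ur},\dagger}_{\tau,K}\otimes_{\Qp}V$.
\end{enumerate}
Concretely, one shows that $H^1(H_{\tau,K},\GL_d(\widehat\B^{\mathrm{ur},\dagger}_{\tau,K}))\to H^1(H_{\tau,K},\GL_d(\widehat\B^{\mathrm{ur}}_{\tau,K}))$ is a bijection on continuous cohomology. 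The key input is that a matrix in $\GL_d(\A_{\tau,K})$ is $\varphi$-conjugate to an overconvergent one: one solves $X^{-1}P\varphi(X)\in\GL_d(\A^\dagger)$ by successive approximation modulo powers of $p$. In particular every $V$ has an overconvergent $\varphi$-module.

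\emph{Step 2: the $\tau$-structure.} Tensoring the comparison in (3) up to $\Bt^\dagger$ gives
\[
\Bt^\dagger\otimes_{\B^\dagger_{\tau,K}}D^\dagger_{\tau,K}(V)\simeq\Bt^\dagger\otimes_{\Qp}V,
\]
compatibly with $\varphi$. Taking $H_\infty$-invariants, and using Hilbert 90 for $H_\infty$ acting on $\Bt^\dagger$ (that is, $(\Bt^\dagger)^{H_\infty}=\Bt^\dagger_L$ and $\Bt^\dagger\otimes V$ is trivializable), we get
\[
\Bt_L^\dagger\otimes D^\dagger_{\tau,K}(V)=(\Bt^\dagger\otimes V)^{H_\infty}.
\]
The right-hand side visibly carries a continuous semilinear $G_\infty$-action commuting with $\varphi$, with $D^\dagger_{\tau,K}(V)$ fixed by $H_{\tau,K}$. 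So $D^\dagger_{\tau,K}$ lands in $\mathrm{Mod}^{\dagger,\text{ét}}_{(\varphi,\tau)}$, and base change to $(\B_{\tau,K},\Bt_L)$ recovers $D_{\tau,K}(V)$ with its $G_\infty$-action.

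\emph{Step 3: equivalence.}
\begin{itemize}
\item \emph{Full faithfulness.} Morphisms of overconvergent objects inject into morphisms of their base changes, which by the preceding theorem are $\mathrm{Hom}_{\G_K}(V,V')$. Conversely, any $\G_K$-map $V\to V'$ induces a map on the $D^\dagger$'s by functoriality.
\item \emph{Essential surjectivity.} Given an étale $(D^\dagger,G)$, base change gives an étale $(\varphi,\tau)$-module. This is $D_{\tau,K}(V)$ for some $V$, and we take
\[
V=\bigl(\Bt^\dagger\otimes_{\Bt^\dagger_L}(\Bt^\dagger_L\otimes D^\dagger)\bigr)^{\varphi=1},
\]
with the $\G_K$-action defined through $G_\infty$ and the Galois action on $\Bt^\dagger$. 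By Step 1 applied to the étale $\varphi$-module $D^\dagger$, $D^\dagger$ coincides with $D^\dagger_{\tau,K}(V)$ inside $D_{\tau,K}(V)$: both are the $\varphi$-stable overconvergent lattice, which is unique by full faithfulness of $\varphi$-modules over $\B^\dagger_{\tau,K}$.
\end{itemize}

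\emph{Main obstacle.} The hard step is the descent in Step 1, namely the overconvergence of étale $\varphi$-modules over $\B_{\tau,K}$ and the uniqueness of the overconvergent lattice. The estimates depend on controlling $v_{\C_p^\flat}$ of the Kummer-tower uniformizer and on a Tate-type trace/normalization for the field of norms of $K_\infty/K$. I would first try to transport the Cherbonnier--Colmez successive-approximation argument verbatim, since it uses only the $\varphi$-structure and the field of norms, not $\Gamma$.

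A secondary difficulty is checking that the $G_\infty$-action on $(\Bt^\dagger\otimes V)^{H_\infty}$ is continuous for the weak topology. This should follow from continuity on $\Bt^\dagger\otimes V$.
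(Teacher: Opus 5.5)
The paper does not prove this statement. It quotes it from Gao--Liu (Theorem 2.5.2) and Gao--Poyeton (Theorem 1.1.2), and its whole content is that $D_{\tau,K}(V)$ is overconvergent. Your Steps 2 and 3 are routine once $D^\dagger_{\tau,K}(V)$ is known to have full rank. Step 1, which is supposed to supply exactly that, does not work.

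\textbf{Why Step 1 fails.} You restrict $V$ to $H_{\tau,K}\simeq\G_{k((u))}$ and use only $\varphi$ and the field of norms. Such an argument would show that \emph{every} $p$-adic representation of $H_{\tau,K}$ is overconvergent, and that is false. Write elements of $\B_{\tau,K}$ as $\sum_n a_nu^n$ with $\varphi(u)=u^p$, and let $\sigma$ be the Frobenius on coefficients. Extensions of $\Qp$ by $\Qp$ as $H_{\tau,K}$-representations are classified by $\B_{\tau,K}/(\varphi-1)\B_{\tau,K}$.
\begin{itemize}
\item For $m\geq 1$ prime to $p$, the functional $\lambda_m(\sum_n a_nu^n)=\sum_{k\geq0}\sigma^{-k}(a_{-p^km})$ vanishes on $(\varphi-1)\B_{\tau,K}$.
\item For $c\in\B^\dagger_{\tau,K}$ one has $v_p(\lambda_m(c))\geq\epsilon m-C$, with $\epsilon>0$ and $C$ depending on $c$.
\item Hence the class of $\sum_{i\geq1}p^iu^{-N_i}$, with distinct $N_i$ prime to $p$ and $N_i/i\to\infty$, has no overconvergent representative. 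The corresponding representation is therefore not overconvergent.
\end{itemize}
So the map on $H^1(H_{\tau,K},\GL_d(\cdot))$ that you claim is bijective is not surjective. Your successive approximation for $X^{-1}P\varphi(X)$ succeeds modulo each $p^n$, but the radius of overconvergence degenerates with $n$, so the limit need not be overconvergent. Henselianity of $\B^\dagger_{\tau,K}$ only gives full faithfulness and the finite-image case.

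\textbf{The underlying misconception.} You read Cherbonnier--Colmez as using only $\varphi$ and the field of norms. Their argument has two stages.
\begin{itemize}
\item A descent from $\Bt^\dagger\otimes V$ to $\Bt^\dagger_K=(\Bt^\dagger)^{H_K}$. This uses only $H_K$; its $H_\infty$-analogue is of the same nature as Lemma~\ref{cohomology of BL} and your Step 2.
\item A decompletion from $\Bt^\dagger_K$ to $\B^\dagger_K$ by the Tate--Sen method. This relies on normalized traces \emph{and} on the invertibility of $\gamma-1$ on their kernels, i.e.\ on the $\Gamma_K$-action.
\end{itemize}
Since $K_\infty/K$ is not Galois, no group acts on $K_\infty$ over $K$. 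The second stage therefore has no direct analogue, whatever trace maps you build on the field of norms. This is exactly why overconvergence of $(\varphi,\tau)$-modules needed new input. Gao--Liu use loose crystalline lifts. Gao--Poyeton use locally analytic vectors for the $p$-adic Lie group $G_\infty$ acting on $\Bt_L$, together with the cyclotomic Cherbonnier--Colmez theorem.

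\textbf{What is missing.} You need to use that $V$ is a representation of $\G_K$, through the $G_\infty$-action (in particular $\tau$) on $\Bt_L\otimes D_{\tau,K}(V)$, to force the descent. No argument that only sees $V|_{H_{\tau,K}}$ can succeed.
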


\subsection{Families of $(\varphi, \tau)$-modules}\label{Families of phi tau modules}
In this section, we recall some results regarding modules over $\Qp$-Banach algebras with a continuous $\G_K$-action.

Let $S$ be a $\Qp$-Banach algebra such that for every maximal ideal $\mathfrak{m}$ of $S$, $S/\mathfrak{m}$ is a finite extension of $\Qp$. Examples of such Banach algebras are $\Qp$-affinoid algebras. Let $\cal{X}$ be the set of maximal ideals of $S$. We write $\mathfrak{m}_x$ for the maximal ideal of $S$ corresponding to a point $x \in \cal{X}$.

Recall that $\pi = [\epsilon] - 1$, where $\epsilon \in \C_p^{\flat}$ is a fixed sequence of compatible, primitive $p^n$-th roots of unity. For each $r > 0$, $\Bt^{\dagger, r}$ is a locally convex vector space over $\Qp$ with the family of lattices $\pi^n\At^{\dagger, r}$ for integers $n \geq 0$. Therefore, we may define the completed tensor product $S \hat{\otimes}_{\Qp} \Bt^{\dagger, r}$ as the completion of the usual tensor product with respect to the given topology on $S$ and the above topology on $\Bt^{\dagger, r}$. We similarly define $S \hat{\otimes}_{\Qp} \B^{\dagger, r}_{\tau, K}$ and $S \hat{\otimes}_{\Qp} \Bt^{\dagger, r}_{L}$. Following \cite[Pg. 949]{KL10} we define $S\hat{\otimes}_{\Qp} \B^{\dagger}_{\tau, K} = \cup_{r > 0} S \hat{\otimes}_{\Qp}\B^{\dagger, r}_{\tau, K}$ and $S\hat{\otimes}_{\Qp} \Bt^{\dagger}_{L} = \cup_{r > 0} S \hat{\otimes}_{\Qp}\Bt^{\dagger, r}_{L}$.

A family of $p$-adic representations of $\G_K$ is an $S$-module $V$ free of finite rank $d$, endowed with a continuous linear action of $\G_K$.  For a point $x \in \cal{X}$, the specialization $S/\mathfrak{m}_x \otimes_{S} V$ is denoted by $V_x$.

A family of $\varphi$-modules over $S \hat{\otimes}_{\Qp} \B_{\tau, K}^{\dagger}$ is a locally free $S \hat{\otimes}_{\Qp} \B_{\tau, K}^{\dagger}$-module $D$ of finite rank with a Frobenius semilinear operator $\varphi_D$ such that the induced map $1 \otimes \varphi_D : S \hat{\otimes}_{\Qp} \B_{\tau, K}^{\dagger} \otimes_{\varphi, S \hat{\otimes}_{\Qp} \B_{\tau, K}^{\dagger}} D \to D$ is an isomorphism.

\begin{defi}\label{Overconvergent phi tau modules}
    A family of $(\varphi, \tau)$-modules over $(S \hat{\otimes}_{\Qp} \B^{\dagger}_{\tau, K}, S \hat{\otimes}_{\Qp} \Bt^{\dagger}_L)$ is a triple $(D, \varphi_D, G)$, where
    \begin{enumerate}
    \item $(D,\phi_D)$ is a $\phi$-module over $S \hat{\otimes}_{\Qp} \B^{\dagger}_{\tau, K}$,
    \item $G$ is \textit{the datum of} a continuous semilinear $G_\infty$-action on $M := (S \hat{\otimes}_{\Qp}\Bt_L^{\dagger}) \otimes_{S \hat{\otimes}_{\Qp}\B_{\tau,K}^{\dagger}}D$ such that $G_{\infty}$ commutes with $\phi_M:=\phi_{S \hat{\otimes}_{\Qp}\Bt_L^{\dagger}}\otimes \phi_D$, i.e., for all $g \in G_\infty$, we have $g\phi_M = \phi_Mg$,
    \item regarding $D$ as a sub-$S \hat{\otimes}_{\Qp} \B_{\tau,K}^{\dagger}$-module of $M$, $D \subset M^{H_{\tau,K}}$.
\end{enumerate}
\end{defi}

Since the ring $S \hat{\otimes}_{\Qp} \Bt^{\dagger, r}_{\tau, K}$ is not stable under $\varphi$, we define a family of $\varphi$-modules over $S \hat{\otimes}_{\Qp} \Bt^{\dagger, r}_{\tau, K}$ as is defined in the last paragraph on \cite[Page 1592]{Pot13}. A family of $(\varphi, \tau)$-modules over $(S \hat{\otimes}_{\Qp} \Bt^{\dagger, r}_{\tau, K}, S \hat{\otimes}_{\Qp} \Bt^{\dagger, r}_L)$ is defined similar to Definition~\ref{Overconvergent phi tau modules}.

The theorem proved by the second author and L\'{e}o Poyeton in \cite[Theorem 49]{KP23} is the following.
\begin{theo}
\label{theo ana families}
Let $V$ be a family of representations of $\G_K$ of rank $d$. Then there exists $r_0 > 0$ such that for any $r \geq r_0$, there exists a family of $(\varphi, \tau)$-modules $D_{\tau,K}^{\dagger,r}(V)$ such that
\begin{enumerate}
\item $D_{\tau,K}^{\dagger,r}(V)$ is an $S\hat{\otimes}_{\Qp}\B^{\dagger,r}_{\tau, K}$-module locally free of rank $d$,
\item the map $(S\hat{\otimes}_{\Qp}\Bt^{\dagger,r})\otimes_{S\hat{\otimes}_{\Qp}\B_{\tau,K}^{\dagger,r}}D_{\tau,K}^{\dagger,r}(V) \rightarrow (S\hat{\otimes}_{\Qp}\Bt^{\dagger,r})\otimes_S V$ is an isomorphism,
\item if $x \in \cal{X}$, the map $S/\mathfrak{m}_x\otimes_SD_{\tau,K}^{\dagger,r}(V) \rightarrow D_{\tau,K}^{\dagger,r}(V_x)$ is an isomorphism.
\end{enumerate}
\end{theo}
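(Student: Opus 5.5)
The plan is to build $D_{\tau,K}^{\dagger,r}(V)$ in two stages. First I would produce an overconvergent object over the big ring $S\hat{\otimes}_{\Qp}\Bt^{\dagger,r}_L$ using the existing theory of families of $(\varphi,\Gamma)$-modules. Then I would descend from $S\hat{\otimes}_{\Qp}\Bt^{\dagger,r}_L$ to $S\hat{\otimes}_{\Qp}\B^{\dagger,r}_{\tau,K}$ by isolating a canonical subspace cut out by the $G_\infty$-action. Finally I would check the rank, the base-change isomorphism (2), and compatibility with specialization (3) fibrewise, and globalize using the fact that $V$ is free.

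\emph{Step 1 (overconvergence over $L$).} By Berger--Colmez \cite{BC08} (and Kedlaya--Liu \cite{KL10}), after enlarging $r_0$ there is a family $\D^{\dagger,r}_K(V)$ of $(\varphi,\Gamma)$-modules over $S\hat{\otimes}_{\Qp}\B^{\dagger,r}_K$. It is locally free of rank $d$ and satisfies
\[
(S\hat{\otimes}_{\Qp}\Bt^{\dagger,r})\otimes \D^{\dagger,r}_K(V)\simeq (S\hat{\otimes}_{\Qp}\Bt^{\dagger,r})\otimes_S V .
\]
Taking $H_\infty$-invariants gives a module
\[
M^r := (S\hat{\otimes}_{\Qp}\Bt^{\dagger,r}_L)\otimes \D^{\dagger,r}_K(V)=\bigl((S\hat{\otimes}_{\Qp}\Bt^{\dagger,r})\otimes_S V\bigr)^{H_\infty}.
\]
It is locally free of rank $d$ and carries a continuous semilinear $G_\infty$-action commuting with $\varphi$. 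This is the ambient $M$ of Definition~\ref{Overconvergent phi tau modules}.

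\emph{Step 2 (descent to the Kummer tower).} Since $H_{\tau,K}$ is not normal in $\G_K$, I cannot simply take invariants of a normal subgroup. Instead I would follow the locally analytic vector approach of Gao--Poyeton \cite{GP21}, adapted to families. The ring-theoretic input is the description of $\B^{\dagger}_{\tau,K}$, up to the operators $\varphi^{-n}$, as the $\gamma$-invariant, $\tau$-pro-analytic vectors of $\Bt^{\dagger}_L$. I would prove the $S\hat{\otimes}$-version by the same Tate--Sen type estimates (normalized traces, with uniform bounds on $r$); these estimates commute with $S\hat{\otimes}_{\Qp}(-)$ because they are given by continuous linear operators. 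I would then define $D_{\tau,K}^{\dagger,r}(V)$ as the corresponding subspace of $M^r$: the $\gamma$-fixed, $\tau$-pro-analytic vectors, followed by the appropriate $\varphi$-twist to return to radius $r$.

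\emph{Step 3 (rank, base change, and specialization).} The fibrewise case is the theorem of Gao--Liu and Gao--Poyeton. For each $x\in\cal X$, Step 2 applied to $V_x$ recovers $D^{\dagger,r}_{\tau,K}(V_x)$, and the construction commutes with $S/\mathfrak m_x\otimes_S-$ because the projectors used are continuous and $S$-linear. This gives (3). For (1) and (2), the natural map
\[
(S\hat{\otimes}_{\Qp}\Bt^{\dagger,r})\otimes D^{\dagger,r}_{\tau,K}(V)\to (S\hat{\otimes}_{\Qp}\Bt^{\dagger,r})\otimes_S V
\]
becomes an isomorphism after specialization at every $x$. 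I would first show that $D^{\dagger,r}_{\tau,K}(V)$ is finitely generated. Local freeness of rank $d$ would then follow from the fibres having constant rank $d$, via a Nakayama argument over the relevant Banach rings. Next I would check that the determinant of the comparison map, computed in bases, is a unit; this is possible because units of $S\hat{\otimes}\Bt^{\dagger,r}$ can be detected pointwise after shrinking the radius.

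\emph{Main obstacle.} I expect the main difficulty to be Step 2 in families, namely obtaining uniform-in-$S$ control of the $\tau$-analytic vectors. Concretely, I need to show that the descended module is large enough, i.e.\ that it spans $M^r$, with an $r_0$ independent of the point $x$. I would first try to reduce to a Galois-stable lattice $T\subset V$ that is trivial modulo $p^{k}$ on an open subgroup. I would then run a Tate--Sen successive-approximation argument, in the manner of Berger--Colmez, to find a basis of $M^r$ on which $\tau$ acts with matrix close enough to the identity to be locally analytic with a uniform radius. Once such a basis exists, the $\gamma$-invariant part is extracted by a convergent averaging in the $\gamma$-direction.
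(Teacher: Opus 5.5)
The paper does not prove this statement; it quotes it from \cite[Theorem 49]{KP23}, so there is no internal argument to compare with. Judged on its own, your architecture is reasonable: reach $L$ via the Berger--Colmez family $\D^{\dagger,r}_K(V)$, then descend to the Kummer tower through $\gamma$-invariant, $\tau$-analytic vectors, as Gao--Poyeton do for a single representation. But the step carrying all the content is not carried out, and the mechanism you propose for it fails.

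What must be shown is that $M^r$ is generated by vectors that are \emph{simultaneously} $\gamma$-invariant and $\tau$-(pro-)analytic, with a radius independent of $x\in\mathcal X$. You place the difficulty in the $\tau$-direction, but that direction is free. $\D^{\dagger,r}_K(V)$ consists of $H_K$-invariants, and $H_K$ maps onto $\langle\tau\rangle=\Gal(L/K_{\mathrm{cycl}})$, so this module is already fixed by $\tau$. The real difficulty is to achieve $\gamma$-invariance without destroying $\tau$-analyticity, and ``convergent averaging in the $\gamma$-direction'' does not exist:
\begin{itemize}
\item $\langle\gamma\rangle\simeq\Zp$ has no $\Qp$-valued Haar measure, and the averages $p^{-n}\sum_{i<p^n}\gamma^i$ have unbounded denominators.
\item Tate-type normalized traces are unavailable for $L/K_\infty$. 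Since $\hat K_\infty$ is perfectoid, this extension is almost \'etale and $p^{-n}\mathrm{Tr}$ is unbounded.
\item You can trivialize the $\gamma$-action by successive approximation at the perfect level, but the correcting matrices then lie in $S\hat\otimes\Bt^{\dagger,r}_L$ with no control on their $\tau$-analyticity.
\end{itemize}
A genuine structural input on the locally analytic vectors of the period rings, made uniform in $S$, is needed here. Afterwards you must still descend from $\bigcup_n\varphi^{-n}(\cdots)$ to $S\hat\otimes\B^{\dagger,r}_{\tau,K}$ with a single $r_0$, which ``the appropriate $\varphi$-twist'' does not address.

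Step 3 has independent problems.
\begin{itemize}
\item \emph{Specialization.} You justify it by ``continuous $S$-linear projectors'', but neither $(\cdot)^{\gamma=1}$ nor the functor of $\tau$-pro-analytic vectors is cut out by such a projector; that would again be an averaging operator. Both functors are only left exact, so they need not commute with $S/\mathfrak m_x\otimes_S-$. For invariants the obstruction lies in $H^1(\langle\gamma\rangle,\mathfrak m_xM^r)$.
\item \emph{Local freeness.} ``Finite generation plus constant fibre rank plus Nakayama'' is unavailable. $S\hat\otimes\B^{\dagger,r}_{\tau,K}$ is neither Noetherian nor local. Also, $S/\mathfrak m_x\otimes_S D$ is a module over the ring $(S/\mathfrak m_x)\otimes_{\Qp}\B^{\dagger,r}_{\tau,K}$, not a fibre at a maximal ideal.
\item \emph{Pointwise units.} Pointwise invertibility in $S\hat\otimes\Bt^{\dagger,r}$ does not give invertibility at a fixed radius. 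The radius needed at $x$ can be unbounded as $x$ varies, while the theorem demands one $r_0$.
\end{itemize}

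In Berger--Colmez's treatment of families of $(\varphi,\Gamma)$-modules the logic runs the other way. One first produces an explicit basis with constants uniform in $S$, using a $\G_K$-stable lattice and a finite extension of $K$. Then (2) is immediate, local freeness comes from finite Galois descent (a direct summand of a free module), and (3) follows by specializing that basis and using the characterization of $D^{\dagger,r}_{\tau,K}(V_x)$. Your outline needs an argument of that kind, adapted to the non-Galois Kummer tower, before it becomes a proof.
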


From this point on, all completed tensor products will be over $\Qp$. Using the existence result above, we make the following definition.

\begin{defi} \label{Definition of families of phi tau modules and their base change}
For a family $V$ of representations of $\G_K$, we define
    \begin{itemize}
        \item $D_{\tau, K}^{\dagger}(V) := S \hat{\otimes} \B_{\tau, K}^{\dagger} \otimes_{S \hat{\otimes}\B_{\tau, K}^{\dagger, r_0}} D_{\tau, K}^{\dagger, r_0}(V)$,

        \item $\Dt^{\dagger, r}_L(V) := S\hat{\otimes}\Bt^{\dagger, r}_L\otimes_{S \hat{\otimes}\B_{\tau, K}^{\dagger, r_0}} D_{\tau, K}^{\dagger, r_0}(V)$ for any $r \geq r_0$,

        \item $\Dt_{L}^{\dagger}(V) := S \hat{\otimes} \tilde{\B}_{L}^{\dagger} \otimes_{S \hat{\otimes} \B_{\tau, K}^{\dagger}} D_{\tau, K}^{\dagger}(V)$.
    \end{itemize}
\end{defi}

\begin{lemm}\label{Hinfty fixed points}
Let $r \geq r_0$. Then,
    \begin{enumerate}
        \item the canonical injection
        \[
            S \hat{\otimes} (\Bt^{\dagger, r})^{H_{\infty}} \hookrightarrow (S \hat{\otimes} \Bt^{\dagger, r})^{H_{\infty}}
        \]
        is an isomorphism.
        
        \item we have
        \[
            \Dt_{L}^{\dagger, r}(V) \simeq \left(S \hat{\otimes} \tilde{\B}^{\dagger, r} \otimes_{S \hat{\otimes} \B_{\tau, K}^{\dagger, r}} D_{\tau, K}^{\dagger, r}(V)\right)^{H_{\infty}}.
        \]
    \end{enumerate}
\end{lemm}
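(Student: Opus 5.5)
For part (1), I would use that $S$ is a $\Qp$-Banach space and fix an orthonormalizable structure. Concretely, I would choose a (Schauder-type) orthogonal basis $(e_i)_{i\in I}$ of $S$, or more robustly reduce to the case where $S$ is potentially orthonormalizable. Then every element of $S\hat{\otimes}\Bt^{\dagger,r}$ can be written uniquely as a convergent sum $\sum_i e_i\otimes b_i$ with $b_i\to 0$ in the locally convex topology of $\Bt^{\dagger,r}$, meaning $b_i$ eventually lies in every lattice $\pi^n\At^{\dagger,r}$.

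The key steps are as follows.
\begin{enumerate}
\item Since $H_\infty$ acts trivially on $S$ and continuously and $\Qp$-linearly on $\Bt^{\dagger,r}$, an element $\sum_i e_i\otimes b_i$ is fixed by $h\in H_\infty$ if and only if $\sum_i e_i\otimes(hb_i-b_i)=0$.
\item By uniqueness of the expansion, this holds if and only if $hb_i=b_i$ for all $i$.
\item Hence the fixed vector is a convergent sum of elements $e_i\otimes b_i$ with $b_i\in(\Bt^{\dagger,r})^{H_\infty}$.
\item Since $(\Bt^{\dagger,r})^{H_\infty}$ is closed and carries the subspace topology, the condition $b_i\to 0$ holds there too. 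So the element lies in $S\hat{\otimes}(\Bt^{\dagger,r})^{H_\infty}$, which gives surjectivity.
\end{enumerate}
If $S$ is not orthonormalizable, I would instead use that every Banach space over $\Qp$ embeds isometrically, up to equivalence of norms, into one of the form $\ell^\infty$-like $c_0(I)$ after replacing the norm. Alternatively, I would write the completed tensor product as an inverse limit over $n$ of $S\hat{\otimes}(\At^{\dagger,r}/\pi^n)$-type quotients and take invariants termwise, using left exactness of invariants and the fact that the $H_\infty$-invariants of $\At^{\dagger,r}/\pi^n\At^{\dagger,r}$ have no obstruction at the level needed. I expect this topological bookkeeping to be the main obstacle: the exactness of $(-)^{H_\infty}$ with the completed tensor product, where $\varpi^n$-torsion and $H^1$ issues could in principle interfere. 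The orthonormal basis route avoids this, and I would try it first.

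For part (2), I would use Theorem~\ref{theo ana families}(1): $D^{\dagger,r}_{\tau,K}(V)$ is locally free of finite rank over $S\hat{\otimes}\B^{\dagger,r}_{\tau,K}$, and $H_\infty\subset H_{\tau,K}$ fixes it. Then
\[
S\hat{\otimes}\Bt^{\dagger,r}\otimes D^{\dagger,r}_{\tau,K}(V)
\]
is a direct summand of a finite free module $(S\hat{\otimes}\Bt^{\dagger,r})^m$ with $H_\infty$ acting coordinatewise on the coefficients. The splitting is $H_\infty$-equivariant because it is defined over $S\hat{\otimes}\B^{\dagger,r}_{\tau,K}$, which $H_\infty$ fixes. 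Taking invariants commutes with finite direct sums and summands, so part (1) gives
\[
\bigl(S\hat{\otimes}\Bt^{\dagger,r}\otimes D^{\dagger,r}_{\tau,K}(V)\bigr)^{H_\infty}=(S\hat{\otimes}\Bt^{\dagger,r}_L)\otimes D^{\dagger,r}_{\tau,K}(V).
\]
Finally, I would compare this with $\Dt^{\dagger,r}_L(V)$, defined via base change from $r_0$, using $D^{\dagger,r}_{\tau,K}(V)=S\hat{\otimes}\B^{\dagger,r}_{\tau,K}\otimes D^{\dagger,r_0}_{\tau,K}(V)$. This is a compatibility I would take from the construction in \cite{KP23}.
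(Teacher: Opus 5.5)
Your proposal is correct and is essentially the paper's proof. For (1) the paper likewise writes $S\simeq\ell^0_\infty(I,\Qp)$, identifies $S\hat{\otimes}\Bt^{\dagger,r}$ with $\ell^0_\infty(I,\Bt^{\dagger,r})$, and takes $H_\infty$-invariants coordinatewise, exactly as in your steps 1--4 (your hedge is unnecessary, since every $\Qp$-Banach space is potentially orthonormalizable). For (2) the paper also reduces to the free case using projectivity of $D^{\dagger,r}_{\tau,K}(V)$; it does this via a diagram chase on $0\to D\to (S\hat{\otimes}\B^{\dagger,r}_{\tau,K})^n\to D'\to 0$ instead of your $H_\infty$-equivariant idempotent, which is a slightly cleaner way of saying the same thing.
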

\begin{proof}
    Since $S$ is a $\Qp$-Banach space, there exists a set $I$ such that $S$ is topologically isomorphic to $\ell^{0}_{\infty}(I, \Qp)$, the $\Qp$-Banach space of functions $I \to \Qp$ which converge to $0$ in the filter of complements of finite subsets of $I$ (see, e.g., \cite[Proposition I.1.5]{Col10}).

    By \cite[Proposition I.1.8]{Col10}, we see that the canonical map $\ell^{0}_{\infty}(I, \Qp) \otimes_{\Qp} \Bt^{\dagger, r} \to \ell^{0}_{\infty}(I, \Bt^{\dagger, r})$ induces an isomorphism $\ell^{0}_{\infty}(I, \Qp) \hat{\otimes}_{\Qp} \Bt^{\dagger, r} \simeq \ell^{0}_{\infty}(I, \Bt^{\dagger, r})$. Since the action of $\G_K$ on $S$ is trivial, we see that $(S \hat{\otimes}_{\Qp} \Bt^{\dagger, r})^{H_{\infty}} \simeq \ell^{0}_{\infty}(I, (\Bt^{\dagger, r})^{H_{\infty}})$. Finally, $\ell^{0}_{\infty}(I, (\Bt^{\dagger, r})^{H_{\infty}}) \simeq \ell^{0}_{\infty}(I, \Qp) \hat{\otimes}_{\Qp}(\Bt^{\dagger, r})^{H_{\infty}} \simeq S \hat{\otimes}_{\Qp}(\Bt^{\dagger, r})^{H_{\infty}}$. This proves
    \[
        S \hat{\otimes} (\Bt^{\dagger, r})^{H_{\infty}} \simeq \left(S \hat{\otimes} \Bt^{\dagger, r}\right)^{H_{\infty}}.
    \]

    For brevity's sake, let $D = D_{\tau, K}^{\dagger, r}(V)$ and $\D_L = \Dt_L^{\dagger, r}(V)$. Since $D$ is projective by construction, there exists $D'$ such that the sequence
    \[
        0 \to D \to (S \hat{\otimes} \B_{\tau, K}^{\dagger, r})^n \to D' \to 0
    \]
    is exact. Using this sequence, we get the following diagram with exact rows (the tensor products with $D$ or $D'$ are over $S \hat{\otimes} \B_{\tau, K}^{\dagger, r}$)
   
    \[
        \begin{tikzcd}
            & (S \hat{\otimes} \Bt^{\dagger, r})^{H_{\infty}} \otimes D \ar[r] \ar[d] & \left((S \hat{\otimes} \Bt^{\dagger, r})^{H_{\infty}}\right)^{n} \ar[r] \ar[d] & (S \hat{\otimes} \Bt^{\dagger, r})^{H_{\infty}} \otimes_{} D' \ar[r] \ar[d] & 0 \\
            0 \ar[r] & \left((S \hat{\otimes} \Bt^{\dagger, r}) \otimes D\right)^{H_{\infty}} \ar[r] & \left((S \hat{\otimes} \Bt^{\dagger, r})^n\right)^{H_{\infty}} \ar[r] & \left((S \hat{\otimes} \Bt^{\dagger, r}) \otimes D'\right)^{H_{\infty}}.
        \end{tikzcd}
    \]
    Since the middle vertical map is an isomorphism and since the right vertical map is an injection, we see that the left vertical map is a surjection. The injectivity of the left vertical map is easy to see. Therefore using $(1)$, we see that
    \[
        \left((S \hat{\otimes} \Bt^{\dagger, r}) \otimes_{S \hat{\otimes} \B_{\tau, K}^{\dagger, r}} D\right)^{H_{\infty}} \simeq (S \hat{\otimes} \Bt^{\dagger, r}_L) \otimes_{S \hat{\otimes} \B_{\tau, K}^{\dagger, r}} D \simeq \D_L.\qedhere
    \]
\end{proof}

\section{Cohomology of $G_{\infty}$-modules and $H_{\infty}$-modules}
In this section, we first write down a complex that allows us to compute the cohomology of a $G_{\infty}$-module. Then we write down a complex that computes the cohomology of a family of $\G_K$-representations restricted to $H_{\infty}$. Finally,  we compute the cohomology of a family of $\G_K$-representations in terms of the cohomology of related modules for the groups $G_{\infty}$ and $H_{\infty}$ using the Hochschild-Serre spectral sequence.

\subsection{Cohomology of $G_{\infty}$-modules}

Recall that if $G$ is a procyclic group and $M$ is a continuous $G$-module, then $H^i(G, M)$ can be computed using the complex
\[
    0 \to M \xrightarrow{g - 1} M \to 0,
\]
where $g$ is a topological generator of $G$.

The group $G_{\infty}$ is not procyclic, but is topologically generated by two elements $\gamma$ and $\tau$. We write down a three-term complex that computes the cohomology of a $G_{\infty}$-module. This is a generalization of the two-term complex displayed above. 
This is needed in the proof of Theorem~\ref{thm:mainIntro}.

\begin{prop}\label{Explicit Ginfty cohomology}
        The cohomology of a $G_{\infty}$-module $M$ is isomorphic to that of the complex
        \[
            \mathcal{C}^{\bullet}(G_{\infty}, M) :=  M \xrightarrow{f_M = {\tiny \begin{pmatrix}\gamma - 1 \\ \tau - 1 \end{pmatrix}}} M \oplus M \xrightarrow{g_M = (\tau - 1, 1 - \delta^{-1}\gamma)} M .
        \]
        Moreover, the explicit isomorphisms are given as follows.
        \begin{enumerate}
            \item The isomorphism $H^0(G_{\infty}, M) \xrightarrow{\sim} \ker f_M$ is the identity map.

            \item The isomorphism $H^1(G_{\infty}, M) \xrightarrow{\sim} \dfrac{\ker g_M}{\im f_M}$ is induced by $c \mapsto (c(\gamma), c(\tau))$.

            \item The isomorphism $H^2(G_{\infty}, M) \xrightarrow{\sim} \coker~g_M$ is induced by $c \mapsto c(\gamma, \gamma^{-1}\tau\gamma) - c(\tau, \gamma)$. 
        \end{enumerate}
\end{prop}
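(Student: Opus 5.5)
The plan is to use the Hochschild--Serre spectral sequence for the normal subgroup $\langle \tau \rangle \simeq \Zp$ of $G_\infty$. The quotient $G_\infty/\langle\tau\rangle \simeq \langle\gamma\rangle$ is procyclic. Both groups have cohomological dimension one for the modules in question, so the spectral sequence degenerates into short exact sequences. This identifies $H^i(G_\infty,M)$ with the cohomology of the total complex of the double complex
\[
\begin{tikzcd}
M \ar[r,"\tau-1"] & M \\
M \ar[r,"\tau-1"]\ar[u,"\gamma-1"] & M \ar[u,"\delta^{-1}\gamma-1"]
\end{tikzcd}
\]
Here the right column computes $H^1(\langle\gamma\rangle, H^1(\langle\tau\rangle,M))$. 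The twist by $\delta^{-1}$ comes from the identity \eqref{The monodromy identity}. If $c$ is a $1$-cocycle of $\langle\tau\rangle$, then the conjugated action of $\gamma$ on $H^1(\langle\tau\rangle,M)$ is $(\gamma\cdot c)(\tau) = \gamma c(\gamma^{-1}\tau\gamma) = \gamma c(\tau^{\chi(\gamma)^{-1}})$. This equals $\gamma\,\delta'\,c(\tau)$ with $\delta' = \frac{\tau^{\chi(\gamma)^{-1}}-1}{\tau-1}$. Using $\gamma\tau\gamma^{-1}=\tau^{\chi(\gamma)}$, one rewrites $\gamma\delta' = \delta^{-1}\gamma$, which produces exactly $g_M=(\tau-1,1-\delta^{-1}\gamma)$ up to sign.

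Rather than rely only on spectral sequence bookkeeping, I will check directly that the stated maps are well defined and bijective, since the explicit formulas are what the paper needs.

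\emph{Degree $0$.} The statement is trivial, because $\gamma,\tau$ topologically generate $G_\infty$.

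\emph{Degree $1$.} Let $c$ be a continuous cocycle. Applying the cocycle relation to $\gamma\tau^{\chi(\gamma)^{-1}} = \tau\gamma$ gives $c(\gamma)+\gamma\delta' c(\tau) = c(\tau)+\tau c(\gamma)$, which is $g_M(c(\gamma),c(\tau))=0$. Coboundaries map into $\im f_M$.
\begin{itemize}
\item Injectivity: if $(c(\gamma),c(\tau)) = f_M(m)$, then $c-dm$ vanishes on generators. By continuity and the cocycle relation it vanishes identically.
\item Surjectivity: given $(x,y)\in\ker g_M$, define $c(\tau^b)=\frac{\tau^b-1}{\tau-1}y$ and $c(\gamma^a)=\frac{\gamma^a-1}{\gamma-1}x$, the latter extended by continuity in $a\in\Zp$. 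Then set $c(\gamma^a\tau^b)=c(\gamma^a)+\gamma^a c(\tau^b)$. The relation $g_M(x,y)=0$ is exactly what is needed to verify the cocycle identity on the semidirect product.
\end{itemize}
Continuity of these operator expressions in $a,b$ will be taken from the continuity of the action, as in the definition of $\delta$.

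\emph{Degree $2$.} This is the main obstacle. First I will check that $c\mapsto c(\gamma,\gamma^{-1}\tau\gamma)-c(\tau,\gamma)$ sends coboundaries into $\im g_M$. For $c=d\beta$ one computes
\[
\gamma\beta(\gamma^{-1}\tau\gamma)-\beta(\tau\gamma)+\beta(\gamma)-\tau\beta(\gamma)+\beta(\tau\gamma)-\beta(\tau).
\]
Using $\beta(\tau^{\chi^{-1}})$ versus $\delta'\beta(\tau)$ is not valid for a general cochain, so I will first normalize $\beta$ modulo coboundaries. This reduces to the case $\beta$ a cocycle on $\langle\tau\rangle$ plus an arbitrary term, and the expression becomes $-g_M(\beta(\gamma),\beta(\tau))$ up to elements already in the image.

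To get bijectivity, I will avoid heavy cocycle manipulation and compare with the degenerate spectral sequence. Hochschild--Serre gives $H^2(G_\infty,M)\simeq H^1(\langle\gamma\rangle,H^1(\langle\tau\rangle,M)) = M/((\tau-1)M + (1-\delta^{-1}\gamma)M)=\coker g_M$. Concretely, the edge map sends $c$ to the class of the $1$-cocycle $\gamma\mapsto [\tau\mapsto c(\gamma,\gamma^{-1}\tau\gamma)-c(\tau,\gamma)]$. This arises from restricting $c$, after modifying it by a coboundary so it vanishes on $\langle\tau\rangle\times\langle\tau\rangle$ (possible since $H^2(\Zp,M)=0$), and evaluating the transgression. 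The hardest part is verifying that this explicit formula matches the spectral sequence map up to sign. My plan is a dimension-shifting check instead. I will embed $M$ into the coinduced module $\mathrm{Ind}\,M=\mathcal{C}(G_\infty,M)$, which is acyclic for both the group cohomology and the complex $\mathcal{C}^\bullet$. The latter acyclicity I will check by writing $G_\infty\simeq\Zp\rtimes\Zp$ as spaces. Then both sides are computed from $H^1$ of the cokernel, via connecting maps. The degree-$1$ isomorphism, together with compatibility of the formula in (3) with connecting homomorphisms, yields the degree-$2$ statement by the five lemma.
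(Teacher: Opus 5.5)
Your proposal is correct, and it takes a genuinely different route from the paper. In degree $1$ the paper uses the inflation--restriction sequence for $\langle\tau\rangle\subset G_\infty$. It builds the parallel short exact sequence $0\to M^{\tau=1}/(\gamma-1)\to\ker g_M/\im f_M\to (M/(\tau-1))^{\delta^{-1}\gamma=1}\to 0$ and concludes by a diagram chase. In degree $2$ it simply quotes Hochschild--Serre, $H^2(G_\infty,M)\simeq H^1(\langle\gamma\rangle,H^1(\langle\tau\rangle,M))$, taking the cocycle-level description of this isomorphism from \cite[Section 10.3]{DHW12}.

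You instead do degree $1$ by hand on the semidirect product, which has the advantage of producing an explicit inverse. The set of $q\in\langle\gamma\rangle$ for which $q\cdot c_N-c_N=d(c_Q(q))$ holds is a closed subgroup, so checking $q=\gamma$ suffices, and there the condition is exactly $g_M(x,y)=0$. In degree $2$ you dimension-shift through $\mathrm{Ind}\,M=\mathcal{C}(G_\infty,M)$. This shows the explicit formula is an isomorphism without identifying it with the spectral-sequence edge map, which is precisely the step the paper outsources to DHW.

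The price is twofold. First, you need acyclicity of $\mathcal{C}^\bullet(\mathrm{Ind}\,M)$: degree $1$ follows from your degree-$1$ result plus Shapiro, and degree $2$ from $\mathrm{Ind}\,M/(\tau-1)\simeq H^1(\langle\tau\rangle,\mathrm{Ind}\,M)=0$. Second, you need one compatibility computation. Let $\tilde c$ lift a $Q$-valued cocycle with $\tilde c|_{\langle\tau\rangle}$ a cocycle; such a lift exists since $H^2(\langle\tau\rangle,M)=0$. Then $d\tilde c(\gamma,\gamma^{-1}\tau\gamma)-d\tilde c(\tau,\gamma)=-g_{\mathrm{Ind}\,M}(\tilde c(\gamma),\tilde c(\tau))$. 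So the square with the two connecting maps commutes up to a harmless sign. Both connecting maps are already isomorphisms, so no five lemma is needed.

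One point should be stated correctly. You cannot ``normalize $\beta$ modulo coboundaries''. For a fixed coboundary $c=d\beta$, the cochain $\beta$ is determined up to a $1$-cocycle $\epsilon$, and every such $\epsilon$ satisfies $\epsilon(\tau^{\chi(\gamma)^{-1}})=\delta'\epsilon(\tau)$. Hence the defect $\beta(\tau^{\chi(\gamma)^{-1}})-\delta'\beta(\tau)$ is an invariant of $c$.

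In fact the formula in (3) is not well defined on arbitrary cocycles. Take $M=\Qp$ with $\tau$ acting trivially and $\gamma$ acting by $\chi(\gamma)$; then $g_M=0$. A locally constant $\beta$ with $\beta(1)=\beta(\gamma)=\beta(\tau)=0$ and $\beta(\tau^{\chi(\gamma)^{-1}})=1$ gives $d\beta\mapsto\chi(\gamma)\neq 0$ in $\coker g_M$.

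So (3) must be read on cocycles vanishing on $\langle\tau\rangle\times\langle\tau\rangle$. This normalization is implicit in the paper through DHW, and it is how the formula is used later via Lemma~\ref{Existence of a c}; your later remark shows you know $c$ needs this. For such $c=d\beta$, $\beta|_{\langle\tau\rangle}$ is automatically a cocycle, and your computation yields $-g_M(\beta(\gamma),\beta(\tau))$. The same normalization is why the lift $\tilde c$ in the connecting-map comparison must be chosen with $\tilde c|_{\langle\tau\rangle}$ a cocycle.
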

    \begin{proof}
        Using equation \eqref{The monodromy identity}, the following computation
        \begin{eqnarray*}
            (1 - \delta^{-1}\gamma)(\tau - 1) + (\tau - 1)(\gamma - 1) & = & (\tau - 1) - \frac{\tau - 1}{\tau^{\chi(\gamma)} - 1} \gamma (\tau - 1) + (\tau - 1)(\gamma - 1) \\
            & = & -\frac{(\tau - 1)}{(\tau^{\chi(\gamma)} - 1)}(\tau^{\chi(\gamma)} - 1)\gamma + (\tau - 1)\gamma \\
            & = & 0
        \end{eqnarray*}
        shows that the displayed diagram is indeed a complex. Next, we show that $\mathcal{C}^{\bullet}(G_{\infty}, M)$ computes $H^i(G_{\infty}, M)$ for all $i \geq 0$. Indeed, we only have to check it for $0 \leq i \leq 2$ as the $p$-cohomological dimension of $G_{\infty}$ is less than or equal to $2$. The claim for $H^0(G_{\infty}, M)$ is clear, since $\gamma$ and $\tau$ generate $G_{\infty}$.

        \vspace{0.2cm}
        \noindent\underline{\textbf{Calculations for $H^1(G_{\infty}, M)$:} }
        For the claim involving $H^1(G_{\infty}, M)$, the Hochschild-Serre spectral sequence for the normal subgroup $\langle \tau \rangle \subseteq G_{\infty}$ says that
        \begin{eqnarray}\label{HS for H1 for a Ginfty module}
            0 \to H^1(\langle \gamma \rangle, M^{\tau = 1}) \xrightarrow{\mathrm{inf}} H^1(G_{\infty}, M) \xrightarrow{\mathrm{res}} \left(H^1(\langle \tau \rangle, M)\right)^{\gamma = 1} \to 0
        \end{eqnarray}
        is exact. It is easy to see that the ``evaluation-at-$\gamma$ map'' yields an isomorphism $H^1(\langle \gamma \rangle, M^{\tau = 1}) \xrightarrow{\sim} M^{\tau = 1}/(\gamma - 1)$. Similarly, we get an isomorphism
        \begin{eqnarray*}
            H^1(\langle \tau \rangle, M) & \xrightarrow{\sim} & M/(\tau - 1) \\
            c & \to & c(\tau).
        \end{eqnarray*}
        However, we must be careful with the $\gamma$-action on both sides. We claim that under the map above, we have $\big(H^1(\langle \tau \rangle, M)\big)^{\gamma = 1} \xrightarrow{\sim} \big(M/(\tau - 1)\big)^{\delta^{-1}\gamma = 1}$. Indeed, suppose $c \in H^1(\langle \tau \rangle, M)$ is fixed by $\gamma$. So
        \begin{eqnarray*}
            c(\tau) & = & (\gamma \cdot c)(\tau) \\
            & = & \gamma \left(c(\gamma^{-1} \tau \gamma)\right) \\
            & = & \gamma \left(c(\tau^{\chi(\gamma)^{-1}})\right) \\
            & = & \gamma\left(\frac{\tau^{\chi(\gamma)^{-1}} - 1}{\tau - 1}c(\tau)\right) \\
            & = & \delta^{-1}\gamma c(\tau).
        \end{eqnarray*}
        The third and the fifth equalities follow from \eqref{The monodromy identity}. The fourth equality follows from the cocycle condition.
        Therefore $\delta^{-1}\gamma(c(\tau)) = c(\tau)$. 

        Next, we claim that the following sequence is exact
        \begin{eqnarray*}
            0 \to M^{\tau = 1}/(\gamma - 1) \xrightarrow{\iota} & \frac{\ker g_M}{\im f_M} & \xrightarrow{\pi} \big(M/(\tau - 1)\big)^{\delta^{-1}\gamma = 1} \to 0 \\
            x \qquad \quad \mapsto & (x, 0) & \\
            & (x, y) & \mapsto \qquad \quad y.
        \end{eqnarray*}
        \begin{enumerate}
            \item Proof of exactness at $M^{\tau = 1}/(\gamma - 1)$: \\
            Given $x \in M^{\tau = 1}$ representing a class in $M^{\tau = 1}/(\gamma - 1)$, we see that $(\tau - 1)x = 0$. This means $(x, 0) \in \ker g_M$. Moreover, if $(x, 0) \in \im f_M$, then there exists $x' \in M$ such that $(\gamma - 1)x' = x$ and $(\tau - 1)x' = 0$. This means that $x$ belongs to $(\gamma - 1)M^{\tau = 1}$.

            \vspace{0.2cm}
            
            \item Proof of exactness at $\frac{\ker g_M}{\im f_M}$: 
            
            \vspace{0.2cm} 
            
            \noindent Given $(x, y) \in \ker g_M$, we see that $(\delta^{-1}\gamma - 1)y = (\tau - 1)x$. Therefore $\delta^{-1}\gamma y = y + (\tau - 1) x$. This shows that the class of $y$ in $M/(\tau - 1)$ belongs to $\big(M/(\tau - 1)\big)^{\delta^{-1}\gamma = 1}$. So the map $\pi$ is well-defined. Next, suppose $\pi(x, y) = 0$. Then there exists $y' \in M$ such that $y = (\tau - 1)y'$. Note that the difference $(x', 0) := (x, y) - ((\gamma - 1)y', (\tau - 1)y')$ belongs to $\ker g_M$ and represents the same class in $\frac{\ker g_M}{\im f_M}$ as $(x, y)$. In particular, $x'$ belongs to $M^{\tau = 1}$ and its class modulo $(\gamma - 1)M^{\tau = 1}$ maps under $\iota$ to the class of $(x, y)$.

            \vspace{0.2cm}
            
            \item Proof of exactness at $\big(M/(\tau - 1)\big)^{\delta^{-1}\gamma = 1}$: \\
            Given $y \in M$ representing a class in $\big(M/(\tau - 1)\big)^{\delta^{-1}\gamma = 1}$, we see that there exists an $x \in M$ such that $\delta^{-1}\gamma y = y + (\tau - 1)x$. Then $(x, y)$ is an element $\ker g_M$. The class of $(x, y)$ in $\frac{\ker g_M}{\im f_M}$ maps to the class of $y$ under $\pi$.
        \end{enumerate}

        Consider the following diagram where the vertical arrows are the natural evaluation maps
        \[
            \begin{tikzcd}
                0 \ar[r] & H^1(\langle \gamma \rangle, M^{\tau = 1}) \ar[r, "\mathrm{inf}"] \ar[d, "\mathrm{ev_{\gamma}}"] & H^1(G_{\infty}, M) \ar[r, "\mathrm{res}"] \ar[d, "{(\mathrm{ev_{\gamma}}, \mathrm{ev_{\tau}})}"] & \big(H^1(\langle \tau \rangle, M)\big)^{\gamma = 1} \ar[r] \ar[d, "\mathrm{ev_{\tau}}"] & 0 \\
                0 \ar[r] & M^{\tau = 1}/(\gamma - 1) \ar[r, "\iota"] & \frac{\ker g_M}{\im f_M} \ar[r, "\pi"] & \big(M/(\tau - 1) \big)^{\delta^{-1}\gamma = 1} \ar[r] & 0.
            \end{tikzcd}
        \]
        It is easy to see that the diagram commutes. Therefore $H^1(G_{\infty}, M) \simeq \frac{\ker g_M}{\im f_M}$ by snake lemma.

        \vspace{0.2cm}
        \noindent\underline{\textbf{Calculations for $H^2(G_{\infty}, M)$:} }
        For the claim involving $H^2(G_{\infty}, M)$, Hochschild-Serre spectral sequence gives
        \[
            H^2(G_{\infty}, M) \simeq H^1\big(\langle \gamma \rangle, H^1(\langle \tau \rangle, M)\big) \simeq \coker~g_M.
        \]
        The first isomorphism can be described using \cite[Section 10.3]{DHW12} as follows. A cocycle $c : G_{\infty}\times G_{\infty} \to M$ representing an element in $H^2(G_{\infty}, M)$ is mapped to  the cocycle in $H^1\big(\langle \gamma \rangle, H^1(\langle \tau \rangle, M)\big)$ by sending
        \[
            \gamma \mapsto \left(\tau \mapsto c(\gamma, \gamma^{-1}\tau \gamma) - c(\tau, \gamma)\right).
        \]
        Since the second isomorphism is the composition of evaluations at $\tau$ and $\gamma$, we see that the ismorphism $H^2(G_{\infty}, M) \xrightarrow{\sim} \coker~g_M$ is given by
        \[
            [c] \mapsto c(\gamma, \gamma^{-1}\tau\gamma) - c(\tau, \gamma). \qedhere
        \]   
    \end{proof}

\subsection{Cohomology of $H_{\infty}$-modules}\label{Cohomology of Hinfty modules}

Next, we are interested in the cohomology of continuous $H_{\infty}$-modules. We need the following preparatory lemmas.

\begin{lemm}\label{cohomology of BL}
    For $r > 0$, we have
    \[
        H^{i}(H_{\infty}, \Bt^{\dagger, r}) = \begin{cases}
            \Bt^{\dagger, r}_{L} & \text{ if } i = 0, \\
            0 & \text{ if } i > 0.
        \end{cases}
    \]
\end{lemm}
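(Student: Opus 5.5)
The $i = 0$ statement is just the definition $\Bt^{\dagger, r}_L := (\Bt^{\dagger, r})^{H_{\infty}}$, so the content is the vanishing of higher cohomology. I would not redo this from scratch. Instead I would reduce it to the analogous statement for the cyclotomic tower, namely $H^i(H_K, \Bt^{\dagger, r}) = 0$ for $i > 0$. That statement is established via Tate--Sen methods (Cherbonnier--Colmez, Berger; it is the input Pottharst uses in \cite{Pot13}), and I would transport it along the extension $L/K_{\cycl}$.

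The first step is the integral, almost-étale statement. Let $\At^{\dagger, r}$ be the integral subring. I would show that $H^i(H_{\infty}, \At^{\dagger, r})$ is killed by $p^{c}$ for a constant $c$ independent of $i$. The route is dévissage modulo $p$ and $\pi$-adic filtrations down to $\O_{\C_p^{\flat}}$ and its overconvergent variant. Here I would use that $L$ is a deeply ramified (perfectoid-after-completion) extension of $K$: it contains $K_{\cycl}$. Hence $\hat{L}^{\flat}$ is perfect and $\C_p^{\flat}/\hat{L}^{\flat}$ is Galois with group $H_{\infty}$. Almost purity (Faltings, Scholze) then gives that $H^i(H_{\infty}, \O_{\C_p^{\flat}})$ is almost zero for $i > 0$. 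Passing to Witt vectors, the same holds for $\Atplus$ and $\At$.

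The second step transports this to the overconvergent rings. The key input is the existence of a normalized trace, or at least an approximate section with bounded denominators, for the ring $\Bt^{\dagger, r}$ relative to $\Bt^{\dagger, r}_L$. Since $L \supset K_{\cycl}$, I would use inflation--restriction for $H_{\infty} \subseteq H_K$ with quotient $\Gal(L/K_{\cycl}) \simeq \Zp$. This compares $H^i(H_{\infty}, \Bt^{\dagger, r})$ to $H^i(H_K, \Bt^{\dagger, r}) = 0$ and to the cohomology of $\langle \tau \rangle$ acting on $\Bt^{\dagger, r}_L$. Concretely, the Hochschild--Serre sequence gives $H^1(H_{\infty}, \Bt^{\dagger, r})^{\tau} $ sitting between $H^1(H_K,\cdot) = 0$ and $H^2(\langle\tau\rangle, \Bt^{\dagger,r}_L) = 0$. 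So the task becomes showing that $\tau - 1$ acting on $H^1(H_{\infty}, \Bt^{\dagger,r})$ is injective with suitable control. Honestly, I expect it is cleaner to argue directly, as follows. Take a continuous cocycle $c$ with values in $\At^{\dagger, r}$. Reduce mod $p$ to get an almost-coboundary in $\O_{\C_p^{\flat}}$. Lift, while controlling the overconvergence condition $v_{\C_p^{\flat}}(x_n) + \frac{pr}{p-1}n \to \infty$ by choosing Teichmüller-coefficient lifts whose valuations are bounded in terms of those of $c$. Then iterate $p$-adically. Inverting $p$ kills the bounded torsion, giving $H^i = 0$ for $\Bt^{\dagger, r}$.

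The main obstacle is exactly this valuation control in the successive-approximation step. Almost vanishing gives coboundaries only up to an arbitrarily small loss $\epsilon$ in valuation at each step, and these losses must not accumulate so as to destroy the radius-$r$ overconvergence condition. My first attempt would be to quote a known result rather than redo it. Namely, \cite[Section 2]{GP21} and \cite{GL20} treat the rings $\Bt^{\dagger, r}_L$ and prove the Tate--Sen-type vanishing $H^1(H_{\infty}, \GL_d(\Bt^{\dagger, r})) = 1$ and its additive analogue. Failing that, I would use the losses $\epsilon_n = \epsilon/2^n$ so that the total loss is bounded by $\epsilon$, and absorb it by proving the statement first for $\At^{\dagger, r'}$ with $r'$ slightly different and then comparing radii.
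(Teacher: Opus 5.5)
There is a genuine gap. You correctly identify where the difficulty lies, but you do not resolve it, and the remedies you list do not work as stated.

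\textbf{Why a purely $p$-adic argument cannot suffice.} Note that $\At^{\dagger,r}\cap p^n\At=p^n\At^{\dagger,r}$ and that $\At^{\dagger,r}\to W_n(\C_p^{\flat})$ is onto. Hence $\At^{\dagger,r}/p^n\At^{\dagger,r}\cong W_n(\C_p^{\flat})$, so anything proved modulo $p^n$, and any passage to $\varprojlim_n$, only sees the $p$-adic completion $\At$. Everything beyond $H^i(H_\infty,\At)=0$ is therefore the overconvergence of the primitive, which is exactly what you leave open.

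\textbf{Why your remedies fail.} The Hochschild--Serre comparison with $H_K$ yields only $H^1(H_\infty,\Bt^{\dagger,r})^{\tau=1}=0$, i.e.\ injectivity of $\tau-1$ on $H^1(H_\infty,\Bt^{\dagger,r})$. That is what you get for free, not what you need, and it presupposes the analogous statement for $H_K$. ``Comparing radii'' is circular. A primitive $b\in\At^{\dagger,r'}$ ($r'>r$) of a cocycle with values in $\At^{\dagger,r}$ can be corrected into $\At^{\dagger,r}$ only if the class of $b$ in $(\At^{\dagger,r'}/\At^{\dagger,r})^{H_\infty}$ lifts to $(\At^{\dagger,r'})^{H_\infty}$. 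That is equivalent to the injectivity of $H^1(H_\infty,\At^{\dagger,r})\to H^1(H_\infty,\At^{\dagger,r'})$, which is what you are trying to prove. The appeal to \cite{GP21}, \cite{GL20} is unchecked and would carry the whole argument. The $\varepsilon/2^n$ bookkeeping alone only gives $\inf_k\bigl(v(b_k)+\frac{pr}{p-1}k\bigr)>-\infty$ for $b=\sum p^k[b_k]$ (with $v=v_{\C_p^{\flat}}$), i.e.\ $b\in\At^{\dagger,r'}$ for $r'>r$. Finally, the bound coming from $\O_{\C_p^{\flat}}$ is in terms of Teichm\"uller lifts $[\varpi^{\varepsilon}]$, not powers of $p$, so inverting $p$ is not what removes it. What removes it is that Teichm\"uller lifts of nonzero elements are units in $\At^{\dagger,r}$: one may rescale, and modulo $p$ one works in the field $\C_p^{\flat}$, where $H^1(H_\infty,\C_p^{\flat})=0$.

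\textbf{The missing ingredient is a uniform estimate.} For $x=\sum p^j[x_j]$ put $w_k(x)=\min_{j\le k}v(x_j)$. Then $w_k(x+y)\ge\min(w_k(x),w_k(y))$, and $w_k$ is $\G_K$-invariant. Almost vanishing of $H^1(H_\infty,\O_{\C_p^{\flat}})$ is where your almost-purity input belongs. It gives: a continuous cocycle $H_\infty\to\C_p^{\flat}$ with values of valuation $\ge m$ is $\partial b$ with $v(b)\ge m-\varepsilon$. Run the $p$-adic successive approximation with losses $\varepsilon_k$, $\sum_k\varepsilon_k\le\varepsilon$, bounding the $k$-th error via $w_k$. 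One gets $v(b_k)\ge\inf_\sigma w_k(c(\sigma))-\varepsilon$. Hence $b\in\At^{\dagger,r}$, with no loss of radius, provided $\inf_\sigma\bigl(v(c(\sigma)_j)+\frac{pr}{p-1}j\bigr)\to+\infty$, i.e.\ provided the coefficients of $c$ decay uniformly in $\sigma$. This holds by compactness of $H_\infty$ when $c$ is continuous for the topology defined by $V(x,r)=\inf_k\bigl(v(x_k)+\frac{pr}{p-1}k\bigr)$. It does not hold for cochains that are merely continuous for the weak topology. Your plan never isolates this point.

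\textbf{Comparison with the paper.} The paper uses neither almost purity nor the cyclotomic tower. It handles $i\ge2$ by $\mathrm{cd}_p(H_\infty)\le1$ (Fontaine--Wintenberger). For $i=1$ it uses d\'evissage of $\At^{\dagger,r}/p^n$ down to $\C_p^{\flat}$, additive Hilbert~90 for $\C_p^{\flat}$ over $\widehat{L}^{\flat}$, and the identification $H^1(H_\infty,\At^{\dagger,r})\simeq\varprojlim_n H^1(H_\infty,\At^{\dagger,r}/p^n)$. By the first observation, the groups on the right do not depend on $r$. So that identification is exactly where overconvergence has to enter, and the paper does not spell it out either. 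Your concern is legitimate, and either route needs an estimate like the one above to be complete.
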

\begin{proof}
    The $i = 0$ part of this lemma follows from the definition of $\Bt_{L}^{\dagger, r}$. So now assume that $i > 0$.
    Since $H_{\infty}$ is compact, we see that
    \begin{eqnarray}\label{Rational and integral Hinfinity-cohomology groups}
        H^i(H_{\infty}, \Bt^{\dagger, r}) \simeq H^i(H_{\infty}, \At^{\dagger, r}) \otimes_{\Zp} \Qp.
    \end{eqnarray}
    The $i > 1$ part follows from \cite[Proposition 2.4.10]{Pon} by noting that $H_{\infty}$ has $p$-cohomological dimension $\leq 1$ (since it is the absolute Galois group of a characteristic $p$ field by Fontaine-Winterberger). 

    Next, we note that $H^1(H_{\infty}, \At^{\dagger, r}) \simeq \varprojlim_n H^1(H_{\infty}, \At^{\dagger, r}/p^n)$ by \cite[Theorem 2.7.5]{Neu} (see also \cite[Proposition 2.1.1]{Pon}). The derived inverse limit there is $0$ because the action of $H_{\infty}$ on $\At^{\dagger, r}$ is only through its action on the teichm\"uller lifts. By induction on $n$, if we show $H^1(H_{\infty}, \C^{\flat}_p) = 0$, then we may conclude $H^1(H_{\infty}, \At^{\dagger, r}/p^n) = 0$. This, in turn, follows from Hilbert-$90$ applied to the extension $\C^{\flat}_p/\C_p^{\flat, H_{\infty}}$. \qedhere

\end{proof}

\begin{lemm}\label{Cocycles commute with tensor S}
    For a real number $r > 0$ and an integer $i \geq 0$, we have the following property concerning $i$-cochains
    \[
        S \hat{\otimes} \C^{i}(H_{\infty}, \Bt^{\dagger, r}) \simeq \C^{i}(H_{\infty}, S \hat{\otimes} \Bt^{\dagger, r}).
    \]
\end{lemm}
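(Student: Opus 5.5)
The plan is to reduce to the case of a space of bounded families indexed by a set, exactly as in the proof of Lemma~\ref{Hinfty fixed points}. Here $\C^{i}(H_{\infty}, -)$ denotes continuous $i$-cochains, i.e.\ continuous maps $H_{\infty}^i \to -$. By \cite[Proposition I.1.5]{Col10}, choose a set $I$ with $S \simeq \ell^0_\infty(I, \Qp)$ topologically. By \cite[Proposition I.1.8]{Col10}, as used before, $S \hat{\otimes} W \simeq \ell^0_\infty(I, W)$ for any of the locally convex spaces $W$ in play. I will apply this twice: with $W = \Bt^{\dagger, r}$, and with $W = \C^i(H_\infty, \Bt^{\dagger, r})$ endowed with the topology of uniform convergence on the compact space $H_\infty^i$. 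The statement then becomes the identification
\[
\ell^0_\infty\big(I, \C^i(H_\infty, \Bt^{\dagger, r})\big) \simeq \C^i\big(H_\infty, \ell^0_\infty(I, \Bt^{\dagger, r})\big),
\]
and this is induced by sending a family $(f_j)_{j \in I}$ to the map $h \mapsto (f_j(h))_{j}$.

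The key steps, in order, are as follows.

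\begin{enumerate}
\item \emph{Well-definedness.} If $f_j \to 0$ uniformly along the filter of cofinite subsets, then for each $h$ the family $(f_j(h))_j$ lies in $\ell^0_\infty(I, \Bt^{\dagger,r})$. Continuity in $h$ follows because the map is a uniform limit of maps that are continuous (finitely supported families), and $\ell^0_\infty$ carries the sup-topology over the lattices $\pi^n \At^{\dagger,r}$.
\item \emph{Injectivity.} This is clear, since the map is evaluation coordinatewise.
\item \emph{Surjectivity.} Given a continuous $F : H_\infty^i \to \ell^0_\infty(I, \Bt^{\dagger,r})$, its coordinates $f_j = \mathrm{pr}_j \circ F$ are continuous. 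It remains to show that $f_j \to 0$ uniformly in $h$. Fix a lattice neighbourhood $U$ of $0$. By compactness of $H_\infty^i$ and continuity of $F$, the image $F(H_\infty^i)$ is compact in $\ell^0_\infty(I,\Bt^{\dagger,r})$, so it is covered by finitely many translates $F(h_k) + U'$ with $U'$ a smaller lattice. Each $F(h_k)$ has only finitely many coordinates outside $U'$, so outside a finite subset of $I$ every coordinate of every $F(h)$ lies in $U' + U' \subseteq U$. This gives the required uniform decay.
\end{enumerate}

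The expected obstacle is topological rather than algebraic. The weak topology on $\Bt^{\dagger, r}$ is not a Banach topology, so one must check that the lattices $\pi^n\At^{\dagger,r}$ behave additively ($U'+U'\subseteq U$ for lattices). One must also check that \cite[Proposition I.1.8]{Col10} applies to the Fréchet-type space $\C^i(H_\infty,\Bt^{\dagger,r})$ with the compact-open topology. I would handle this by working lattice by lattice: first prove the integral statement for $\At^{\dagger,r}$ modulo each lattice (where everything is discrete and compactness makes cochains locally constant with finitely many values), then pass to the inverse limit and invert $p$. This last step is legitimate because $H_\infty^i$ is compact, so cochains are bounded.

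Finally, the isomorphism is visibly compatible with the coboundary maps and with the $S$-module structure, since the coboundaries act coordinatewise. Hence it is an isomorphism of complexes, which is what the later use, computing $H^i(H_\infty, S\hat\otimes\Bt^{\dagger,r})$ via Lemma~\ref{cohomology of BL}, will need.
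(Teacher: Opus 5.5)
Your proof is correct, but it takes a different route from the paper's. You pass to explicit models on both sides: $S\hat{\otimes}W\simeq\ell^0_\infty(I,W)$, for $W=\Bt^{\dagger,r}$ and for $W=\mathrm{Map}_{\cont}(H_\infty^i,\Bt^{\dagger,r})$ with the uniform topology. You then prove $\ell^0_\infty(I,\mathrm{Map}_{\cont}(X,W))\simeq\mathrm{Map}_{\cont}(X,\ell^0_\infty(I,W))$ for compact $X$ by hand. Continuity comes from uniform approximation by finitely supported truncations, and surjectivity holds because compactness forces the coordinates to decay uniformly.

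The paper does not use $\ell^0_\infty$ in this lemma. It writes the completed tensor products as inverse limits of quotients by the lattices $p^n(S^0\otimes\At^{\dagger,r})$ and moves $\mathrm{Map}_{\cont}(H_\infty^i,-)$ inside the limit. It uses compactness and discreteness of the quotients to commute this with tensoring by $S^0$. It then proves surjectivity of $\mathrm{Map}_{\cont}(H_\infty^i,\Bt^{\dagger,r})\to\mathrm{Map}_{\cont}(H_\infty^i,\Bt^{\dagger,r}/p^n\At^{\dagger,r})$ by lifting locally constant maps. This is essentially the ``lattice by lattice'' fallback you sketch at the end, and it compares the two completions directly.

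What your route buys:
\begin{enumerate}
\item the isomorphism is given by an explicit formula, so $S$-linearity and compatibility with coboundaries are immediate;
\item it avoids the bookkeeping with quotients of tensor products.
\end{enumerate}
Its price is the identification $S\hat{\otimes}W\simeq\ell^0_\infty(I,W)$ for the non-Banach space $W=\mathrm{Map}_{\cont}(H_\infty^i,\Bt^{\dagger,r})$. This is where completeness of $W$ for its lattice topology enters. It holds for the uniform topology as soon as it holds for $\Bt^{\dagger,r}$, which is implicitly assumed in Lemma~\ref{Hinfty fixed points}.

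You do not need the precise hypotheses of \cite[Proposition I.1.8]{Col10} for this. With $S^0\simeq\ell^0_\infty(I,\Zp)$ the unit ball, one has $S^0/p^kS^0\simeq\bigoplus_I\Z/p^k\Z$. Hence $S^0\otimes_{\Zp}M\simeq\bigoplus_I M$ for every $p$-power torsion module $M$. So for complete $W$ with open lattices $\Lambda_n$, one gets $S\hat{\otimes}W=\varprojlim_n S^0\otimes_{\Zp}(W/\Lambda_n)=\varprojlim_n\bigoplus_I W/\Lambda_n=\ell^0_\infty(I,W)$. The open lattices correspond to $\ell^0_\infty(I,\Lambda_n)$, which is exactly the sup-topology your continuity and compactness arguments use.
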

\begin{proof}
    Let $S^0$ denote the subset of elements in $S$ with norm less than or equal to $1$. We know that
    \begin{eqnarray*}
        S \hat{\otimes}_{\Qp} \Bt^{\dagger, r} \simeq \varprojlim_{n \geq 0} (S \otimes_{\Qp} \Bt^{\dagger, r})/p^n(S^0 \otimes_{\Zp} \At^{\dagger, r}) \simeq \varprojlim_{n \geq 0} (S^0 \otimes_{\Zp} \Bt^{\dagger, r})/p^n(S^0 \otimes_{\Zp} \At^{\dagger, r}). 
    \end{eqnarray*}
    Therefore, $\mathrm{Map}_{\cont}(H_{\infty}^i, S \hat{\otimes}_{\Qp} \Bt^{\dagger, r}) \simeq \varprojlim_{n \geq 0}\mathrm{Map}_{\cont}(H_{\infty}^i, S^0/p^nS^0 \otimes_{\Zp} \Bt^{\dagger, r}/p^n\At^{\dagger, r})$. Using the compactness of $H_{\infty}^i$ and the discreteness of the coefficient modules, we get
    \begin{eqnarray}\label{Continuous maps tensor product}
        \mathrm{Map}_{\cont}(H_{\infty}^i, S \hat{\otimes}_{\Qp} \Bt^{\dagger, r}) \simeq \varprojlim_{n \geq 0} S^0/p^nS^0 \otimes_{\Zp} \mathrm{Map}_{\cont}(H_{\infty}^i, \Bt^{\dagger, r}/p^n\At^{\dagger, r}).
    \end{eqnarray}
    
    Now, consider the following sequence
    \begin{eqnarray}\label{Continuous maps commute with quotients}
        0 \to \mathrm{Map}_{\cont}(H_{\infty}^i, p^n\At^{\dagger, r}) \to \mathrm{Map}_{\cont}(H_{\infty}^i, \Bt^{\dagger, r}) \to \mathrm{Map}_{\cont}(H_{\infty}^i, \Bt^{\dagger, r}/p^n\At^{\dagger, r}) \to 0.
    \end{eqnarray}
    The exactness at the first two modules is easy to check.
    Here is the argument for the exactness at the last module. Any continuous function $f$ from the compact set $H_{\infty}^i$ to the discrete set $\Bt^{\dagger, r}/p^n\At^{\dagger, r}$ factors through a finite quotient $(H_{\infty}/H)^i$ of $H_{\infty}^i$. We then produce a continuous function $f'$ from $(H_{\infty}/H)^i$ to $\Bt^{\dagger, r}$ by lifting each $f(x)$ arbitrarily for each $x \in (H_{\infty}/H)^i$. Then pre-composing $f'$ with the canonical surjection $H_{\infty}^i \to (H_{\infty}/H)^i$, we get a lift of $f$ valued in $\Bt^{\dagger, r}$.

    Therefore, combining \eqref{Continuous maps tensor product} and \eqref{Continuous maps commute with quotients}, we see that
    \[
        \mathrm{Map}_{\cont}(H_{\infty}^i, S \hat{\otimes}_\Qp \Bt^{\dagger, r}) \simeq \varprojlim_{n \geq 0} S^0/p^nS^0 \otimes_{\Zp} \left(\mathrm{Map}_{\cont}(H_{\infty}^i, \Bt^{\dagger, r})/\mathrm{Map}_{\cont}(H_{\infty}^i, p^n\At^{\dagger, r})\right).
    \]
    The inverse limit on the right is $S \hat{\otimes}_{\Qp}\left(\mathrm{Map}_{\cont}(H_{\infty}^i, \Bt^{\dagger, r})\right)$ is by definition. 
    
    One can check that under the isomorphism
    \[
        S \hat{\otimes}_{\Qp} \mathrm{Map}_{\cont}(H_{\infty}^i, \Bt^{\dagger, r}) \xrightarrow{\sim} \mathrm{Map}_{\cont}(H_{\infty}^i, S \hat{\otimes}_\Qp \Bt^{\dagger, r}),
    \]
    a function $f \in \mathrm{Map}_{\cont}(H_{\infty}^i, \Bt^{\dagger, r})$ is sent to its composition with $\Bt^{\dagger, r} \to \Bt^{\dagger, r} \hat{\otimes}_{\Qp} S$. This shows that the isomorphism commutes with differentials, proving the lemma.
\end{proof}

Now we prove the following proposition, which allows us to compute the $H_{\infty}$-cohomology of a family of representations of $\G_K$.
\begin{prop}\label{Hinfty cohomology}
    Let $V$ be a family of representations of $\G_K$ over $S$. Then, for $r \geq r_0$ we have
    \begin{enumerate}
        \item $\mathrm{H}^{0}(H_{\infty}, V) \simeq \ker\left(\Dt^{\dagger, r}_{L}(V) \xrightarrow{\varphi - 1} \Dt^{\dagger, pr}_{L}(V)\right)$,
        \item $\mathrm{H}^{1}(H_{\infty}, V) \simeq \coker\left(\Dt^{\dagger, r}_{L}(V) \xrightarrow{\varphi - 1} \Dt^{\dagger, pr}_{L}(V)\right)$,
        \item $H^i(H_{\infty}, V) = 0$ if $i \geq 2$.
    \end{enumerate}
    Therefore, the complex
    \[
        0 \to \Dt^{\dagger, r}_L(V) \xrightarrow{\varphi-1} \Dt^{\dagger, pr}_L(V) \to 0
    \]
    computes the cohomology of the $H_{\infty}$-module $V$.
\end{prop}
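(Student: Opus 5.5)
We use the Artin--Schreier type short exact sequence of $H_\infty$-modules
\[
0 \to V \to (S\hat{\otimes}\Bt^{\dagger,r})\otimes_S V \xrightarrow{\varphi-1} (S\hat{\otimes}\Bt^{\dagger,pr})\otimes_S V \to 0,
\]
together with the acyclicity of the middle and right terms. The long exact cohomology sequence, combined with Lemma~\ref{Hinfty fixed points}, will then identify everything with the two-term complex built from $\Dt^{\dagger,r}_L(V)$.

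\textbf{Step 1: the short sequence is exact.} Left exactness should reduce to $(\Bt^{\dagger,r})^{\varphi=1}=\Qp$. Via the $\ell^0_\infty(I,\cdot)$ description of $S\hat\otimes$ used in Lemma~\ref{Hinfty fixed points}, this gives $(S\hat\otimes\Bt^{\dagger,r})^{\varphi=1}=S$, and tensoring with the free module $V$ is harmless. For surjectivity of $\varphi-1:\Bt^{\dagger,r}\to\Bt^{\dagger,pr}$ we cite the classical result (e.g.\ \cite{Ber04}, or Pottharst's Lemma in \cite{Pot13}). We then pass to $S\hat\otimes$ with a bounded continuous section of $\varphi-1$, extended coordinatewise on $\ell^0_\infty(I,\cdot)$. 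This yields the section $s^{(pr)}$ that will be fixed at the end of the section.

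\textbf{Step 2: acyclicity of the middle and right terms.} Since $V$ is free over $S$, Theorem~\ref{theo ana families}(2) lets us rewrite the middle term as $(S\hat\otimes\Bt^{\dagger,r})\otimes_{S\hat\otimes\B^{\dagger,r}_{\tau,K}} D^{\dagger,r}_{\tau,K}(V)$, on which $H_\infty$ acts only through the first factor. The same holds at radius $pr$. Lemma~\ref{cohomology of BL} and Lemma~\ref{Cocycles commute with tensor S} give $H^i(H_\infty,S\hat\otimes\Bt^{\dagger,r})=0$ for $i>0$. To justify commuting cohomology with $S\hat\otimes$, we need that the complex of continuous cochains $\C^\bullet(H_\infty,\Bt^{\dagger,r})$ is exact in positive degrees with strict (bounded) splittings, so that completed tensor with the Banach space $S$ preserves exactness. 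Since $D^{\dagger,r}_{\tau,K}(V)$ is projective, it is a direct summand of a free module. Hence the higher cohomology of the tensor product vanishes, and by Lemma~\ref{Hinfty fixed points}(2) its $H^0$ is $\Dt^{\dagger,r}_L(V)$.

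\textbf{Step 3: long exact sequence.} The long exact sequence gives $H^0(H_\infty,V)=\ker(\varphi-1)$ and $H^1(H_\infty,V)=\coker(\varphi-1:\Dt^{\dagger,r}_L\to\Dt^{\dagger,pr}_L)$. Explicitly, $x$ is sent to the cocycle $h\mapsto (h-1)s^{(pr)}x$. For $i\ge2$ we get $H^i(H_\infty,V)$ squeezed between the zero groups $H^{i-1}$ and $H^i$ of acyclic modules, so it vanishes.

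\textbf{Main obstacle.} We expect the hardest step to be Step 2: higher cohomology vanishing after completed tensor with $S$. The naive argument must control the images of the differentials, not just the cochain modules. We would try first to reduce to $S^0/p^n\otimes_{\Zp}\At^{\dagger,r}$-coefficients. There, vanishing holds by the argument of Lemma~\ref{cohomology of BL} (Hilbert 90 and $\mathrm{cd}_p\le1$), since $S^0/p^n$ is a direct limit of free $\Z/p^n$-modules and cohomology of the profinite group commutes with direct limits of discrete modules. We would then pass to the limit using the vanishing of $\varprojlim^1$, as in the proof of Lemma~\ref{cohomology of BL}.
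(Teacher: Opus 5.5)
Your proposal follows the paper's proof essentially step for step. You complete-tensor the Artin--Schreier sequence $0\to\Qp\to\Bt^{\dagger,r}\xrightarrow{\varphi-1}\Bt^{\dagger,pr}\to0$ with $S$ via $\ell^0_\infty(I,\cdot)$, tensor with the free module $V$, rewrite using Theorem~\ref{theo ana families}(2), get acyclicity of the middle and right terms from Lemma~\ref{cohomology of BL}, Lemma~\ref{Cocycles commute with tensor S} and projectivity of $D^{\dagger,r}_{\tau,K}(V)$, and finish with the long exact sequence, which is available thanks to a continuous section of $\varphi-1$; this is exactly the paper's argument. The strictness issue you flag as the main obstacle (that $S\hat{\otimes}(\cdot)$ preserves exactness of the continuous cochain complex only under topological control of the differentials) is simply passed over in the paper, which just asserts that $S\hat{\otimes}(\cdot)$ is exact, so your proposal is at the same level of completeness as the source on that point, not below it.
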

\begin{proof}
    Consider the following well-known short exact sequence
    \[
        0 \to \Qp \to \Bt^{\dagger, r} \xrightarrow{\varphi - 1} \Bt^{\dagger, pr} \to 0.
    \]
    Since $S$ is a $\Qp$-Banach algebra, taking completed tensor products with $S$ is the same as applying the functor $\ell^{0}_{\infty}(I, \_)$ for some fixed indexing set $I$ depending on $S$. So we immediately see that taking completed tensor products with $S$ is an exact functor. Therefore the following sequence is exact
    \[
        0 \to S \to S \hat{\otimes}_{\Qp} \Bt^{\dagger, r} \xrightarrow{\varphi - 1} S \hat{\otimes}_{\Qp} \Bt^{\dagger, pr} \to 0.
    \]
    Recall that a family $V$ of representations of $\G_K$ is a free $S$-module. It is therefore flat over $S$. So we may tensor the exact sequence above with $V$ to get
    \[
        0 \to V \to (S \hat{\otimes}_{\Qp} \Bt^{\dagger, r}) \otimes_S V \to (S \hat{\otimes}_{\Qp}\Bt^{\dagger, pr}) \otimes_S V \to 0.
    \]
    Rewriting the second and third terms using Theorem~\ref{theo ana families} (2), the sequence above becomes
    \[
        0 \to V \to (S \hat{\otimes}\Bt^{\dagger, r}) \otimes_{S \hat{\otimes}\B^{\dagger, r}_{\tau, K}} \D^{\dagger, r}_{\tau, K}(V) \to (S \hat{\otimes}\Bt^{\dagger, pr}) \otimes_{S \hat{\otimes}\B^{\dagger, pr}_{\tau, K}} \D^{\dagger, pr}_{\tau, K}(V) \to 0.
    \]
    We claim that the lemma follows once it is shown that $\mathrm{H}^{i}(H_\infty, (S \hat{\otimes}\Bt^{\dagger, (r, pr)}) \otimes_{S \hat{\otimes}\B^{\dagger, (r, pr)}_{\tau, K}} \D^{\dagger, (r, pr)}_{\tau, K}(V)) = 0$ for $i \geq 1$. Indeed, taking $H_{\infty}$-fixed points, we get a long exact sequence of cohomology groups because of the existence of an $S$-linear continuous section of the surjection $\varphi - 1 : (S \hat{\otimes}\Bt^{\dagger, r}) \otimes_{S \hat{\otimes}\B^{\dagger, r}_{\tau, K}} \D^{\dagger, r}_{\tau, K}(V) \to (S \hat{\otimes}\Bt^{\dagger, pr}) \otimes_{S \hat{\otimes}\B^{\dagger, pr}_{\tau, K}} \D^{\dagger, pr}_{\tau, K}(V)$ obtained using a continuous $\Qp$-linear section to $\varphi - 1 : \Bt^{\dagger, r} \to \Bt^{\dagger, pr}$.

    First, we use Lemma~\ref{cohomology of BL} to get the following exact sequence 
   
    \[
        0 \to \Bt^{\dagger, (r, pr)}_{L} \to \C^0_{\cont}(H_\infty, \Bt^{\dagger, (r, pr)}) \to \C^1_{\cont}(H_\infty, \Bt^{\dagger, (r, pr)}) \to \cdots.
    \]
    Next, we apply $S \hat{\otimes} \_$, use Lemma~\ref{Cocycles commute with tensor S}, and apply $\_ \otimes \D^{\dagger, (r, pr)}_{\tau, K}(V)$ to get $$\mathrm{H}^{i}(H_\infty, (S \hat{\otimes}\Bt^{\dagger, (r, pr)}) \otimes_{S \hat{\otimes}\B^{\dagger, (r, pr)}_{\tau, K}} \D^{\dagger, (r, pr)}_{\tau, K}(V)) = 0$$ for $i \geq 1$.
\end{proof}

It is worth making the isomorphisms in Proposition~\ref{Hinfty cohomology} explicit for the proof of Theorem~\ref{thm:mainIntro}. So we do it here. We fix, once and for all, a continuous $S$-linear section $s^{(pr)} : (S \hat{\otimes}_{\Qp} \Bt^{\dagger, pr}) \otimes_S V \to (S \hat{\otimes}_{\Qp} \Bt^{\dagger, r}) \otimes_S V$ to $\varphi - 1$.

The first isomorphism in the lemma above is induced by the canonical inclusion $V^{H_{\infty}} \hookrightarrow \left((S \hat{\otimes}\Bt^{\dagger, r}) \otimes_S V\right)^{H_{\infty}}$ followed by the isomorphisms $\left((S \hat{\otimes}\Bt^{\dagger, r}) \otimes_S V\right)^{H_{\infty}} \simeq \left(S \hat{\otimes} \tilde{\B}^{\dagger, r} \otimes_{S \hat{\otimes} \B_{\tau, K}^{\dagger, r}} \D_{\tau, K}^{\dagger, r}(V)\right)^{H_{\infty}} \simeq \Dt_{L}^{\dagger, r}(V)$ obtained using Theorem~\ref{theo ana families} (2) and Lemma~\ref{Hinfty fixed points} (2).

We let $\eta^{(pr)} : \mathrm{H}^{1}(H_{\infty}, V) \xrightarrow{\sim} \coker\left(\Dt^{\dagger, r}_{L}(V) \xrightarrow{\varphi - 1} \Dt^{\dagger, pr}_{L}(V)\right)$ be the second isomorphism.
It is described using the connecting homomorphism in the following way. Let $x \in \Dt^{\dagger, pr}_L(V)$ represent an element in $\frac{\Dt^{\dagger, pr}_L(V)}{(\varphi - 1)\Dt^{\dagger, r}_L(V)}$. Then, under the second isomorphism in the proposition above, the class of $x$ corresponds to the $1$-cocycle given by $\sigma \mapsto (\sigma - 1)s^{(pr)}x$ for $\sigma \in H_{\infty}$. In particular, every class $\alpha \in H^1(H_{\infty}, V)$ is represented by a $1$-cocycle $c$ defined by $c(\sigma) = (\sigma - 1)s^{(pr)}x$ for an $x \in \Dt^{\dagger, pr}_L(V)$ representing the class $\eta^{(pr)}(\alpha)$.

\subsection{The inflation-restriction sequence for $H_{\infty}$ and $G_{\infty}$}\label{Explicit maps in inflation-restriction}

The inflation-restriction exact sequence with explicit maps is a key tool in the proof of Theorem~\ref{thm:mainIntro}. To this end, we adopt the results of Dekimpe, Hartl, and Wauters \cite{DHW12} to our setting.

As usual, let $V$ be a family of representations of $\G_K$. Then, the exact sequence on Page $71$ of loc. cit. applied to the normal subgroup $H_{\infty}$ of $\G_K$ becomes
\begin{equation}\label{DHW exact sequence}
    \begin{aligned}
        0 & \to H^1(G_{\infty}, V^{H_{\infty}}) \xrightarrow{\mathrm{\inf}} H^1(\G_K, V) \xrightarrow{\mathrm{res}} H^1(H_{\infty}, V)^{G_{\infty}} \xrightarrow{\mathrm{tr}} H^2(G_{\infty}, V^{H_{\infty}}) \\
        & \xrightarrow{\mathrm{inf}} H^2(\G_K, V) \xrightarrow{\rho}H^1(G_{\infty}, H^1(H_{\infty}, V)) \to 0.
    \end{aligned}
\end{equation}
Since $G_{\infty}$ is an extension of two groups of $p$-cohomological dimension $1$, it does not support cohomologies in degree $3$ and above. Therefore we have omitted the $H^3(G_{\infty}, V^{H_{\infty}})$ term. Also, since $H^2(H_{\infty}, V) = 0$ by Proposition~\ref{Hinfty cohomology}, we see that the group $H^2(\G_K, V)_1$ of loc. cit. is just $H^2(\G_K, V)$.

Let us describe the maps appearing in \eqref{DHW exact sequence}.  
Here, $\mathrm{inf}$ and $\mathrm{res}$ are the usual inflation and restriction maps. In the following, we describe the transgression map $\mathrm{tr}$ from \cite[Proposition 1.6.6]{Neu} and the map $\rho$ from \cite[Section 10.3]{DHW12}. 

Let $x : H_{\infty} \to V$ be a $1$-cocycle representing a class in $H^1(H_{\infty}, V)^{G_{\infty}}$. Then there exists a \underline{$1$-cochain} $y : \G_K \to V$ such that 
\begin{itemize}
    \item $y\vert_{H_{\infty}} = x$,
    \item $\partial y : \G_K\times\G_K \to V$ defined by $\partial y(\sigma_1, \sigma_2) = y(\sigma_1) - y(\sigma_1\sigma_2) + \sigma_1 y(\sigma_2)$ maps $\G_K \times \G_K$ to $V^{H_{\infty}}$,
    \item the value $\partial y(\sigma_1, \sigma_2)$ depends only on the classes of $\sigma_1$ and $\sigma_2$ modulo $H_{\infty}$.
\end{itemize}
The map $\mathrm{tr}$ sends the class of $x$ to the class of $\partial y$ in $H^2(G_{\infty}, V^{H_{\infty}})$.

Next we describe the map $\rho$. Let $c : \G_K \times \G_K \to V$ be a $2$-cocycle representing an element in $H^2(\G_K, V)$. Then, for every $g \in \G_K$, the map $d_g : H_{\infty} \to V$ defined by $d_g(h) = c(g, g^{-1}hg) - c(h, g)$ is a $1$-cocycle in $H^1(H_{\infty}, V)$. Furthermore, the class of $d_g$ in $H^1(H_{\infty}, V)$ depends only on the class of $g$ modulo $H_{\infty}$. Thus, to the $2$-cocycle $c$, we may associate the $1$-cocycle $d_{\_} : G_{\infty} \to H^1(H_{\infty}, V)$ and define $\rho([c])$ to be the class of $d_{\_}$.

Even though these maps are defined for the classical cohomology groups, one can check that the description of the maps given above goes through for continuous cohomolgy groups mutatis mutandis, making the sequence \eqref{DHW exact sequence} exact.

\section{A four-term complex for $\G_K$-cohomology}
    In this section, we construct a four-term complex that computes the cohomology of families of representations of $\G_K$. 

    Let $V$ be a family of representations of $\G_K$ and for $r \geq r_0$, where $r_0$ is defined in Theorem~\ref{theo ana families}, recall the associated family of $(\varphi, \tau)$-modules $\D^{\dagger, r}_{\tau, K}(V)$ and its base change $\Dt^{\dagger, r}_L(V)$ to $S\hat{\otimes}\Bt^{\dagger, r}_L$ as in Definition~\ref{Definition of families of phi tau modules and their base change}. To simplify the notation, we let $\D^{(r, pr)} = \Dt^{\dagger, (r, pr)}_L(V)$.
    
    Consider the complex
    {\small
    \[
        0 \to \D^{r} \xrightarrow{d_0 = \begin{pmatrix}\varphi - 1 \\ \gamma - 1 \\ \tau - 1\end{pmatrix}} \D^{pr} \oplus \D^{r} \oplus \D^{r} \xrightarrow{d_1 = \begin{pmatrix}\gamma - 1 & 1 - \varphi & 0 \\ \tau - 1 & 0 & 1 - \varphi \\ 0 & \tau - 1 & 1 - \delta^{-1}\gamma\end{pmatrix}} \D^{pr} \oplus \D^{pr} \oplus \D^{r} \xrightarrow{d_2 = (\tau - 1, 1 - \delta^{-1}\gamma, \varphi - 1)} \D^{pr} \to 0.
    \]
    }

    Recall that for a $G_{\infty}$-module $M$, we have defined the maps $f_M, g_M$ in Proposition~\ref{Explicit Ginfty cohomology}. In the proof of the following proposition, we will repeatedly use the fact that $g_M \circ f_M = 0$.
    We first write a six-term exact sequence that is the $(\varphi, \tau)$-module analogue of sequence \eqref{DHW exact sequence}.
    \begin{prop}\label{Substitute for 6 term HS}
        With the notation as above, the following sequence
        \begin{eqnarray*}
            \begin{tikzcd}[column sep = 0.2cm]
            0 \ar[r] & \frac{\ker g_{(\D^{r})^{\varphi = 1}}}{\im f_{(\D^{r})^{\varphi = 1}}} \ar[r, "\delta_1"] & \frac{\ker d_1}{\im d_0} \ar[r, "\delta_2"] & \left(\frac{\D^{pr}}{(\varphi - 1)\D^{r}}\right)^{\tau = 1, \gamma = 1} \ar[r, "\delta_3"] & \frac{(\D^{r})^{\varphi = 1}}{(\tau - 1, 1 - \delta^{-1}\gamma)} \ar[r, "\delta_4"] & \frac{\ker d_2}{\im d_1} \ar[r, "\delta_5"] & \frac{\ker g_{(\D^{pr}/(\varphi - 1)\D^{r})}}{\im f_{(\D^{pr}/(\varphi - 1)\D^{r})}} \ar[r] & 0 \\
            & (y, z) \ar[r, mapsto] & (0, y, z) & x \ar[r, mapsto] & (\tau - 1)y_x + (1 - \delta^{-1}\gamma)z_x & (x, y, z) \ar[r, mapsto] & (x, y) & \\
            & & (x, y, z) \ar[r, mapsto] & x & z \ar[r, mapsto] & (0, 0, z)
            \end{tikzcd}
        \end{eqnarray*}
        is exact, where $y_x, z_x \in \D^{r}$ satisfy $(\gamma - 1)x = (\varphi - 1)y_x$ and $(\tau - 1)x = (\varphi - 1)z_x$.
    \end{prop}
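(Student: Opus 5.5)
We view the complex of Theorem~\ref{thm:mainIntro} as the total complex of the double complex whose rows are $\mathcal{C}^\bullet(G_\infty,\D^r)$ and $\mathcal{C}^\bullet(G_\infty,\D^{pr})$, joined by the vertical map $\varphi-1$. The plan is to read the six-term sequence as the low-degree part of the spectral sequence of this double complex, filtered by the $G_\infty$-degree. We do not invoke the spectral sequence, however; exactness is checked by hand at each of the six spots, using only $g_M\circ f_M=0$ and the fact that $\varphi$ commutes with $\gamma$, $\tau$ and $\delta$.

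\textbf{Well-definedness.} For $\delta_1$: if $(y,z)\in\ker g$ with $y,z$ $\varphi$-fixed, then $d_1(0,y,z)=(0,0,g(y,z))=0$, and $\im f$ maps into $\im d_0$. The map $\delta_2$ is clear, because the first two rows of $d_1$ say that the image is killed by $\gamma-1$ and $\tau-1$ modulo $(\varphi-1)\D^r$. For $\delta_3$: since $\varphi-1$ commutes with $\tau$ and $\delta^{-1}\gamma$, we get
\[
(\varphi-1)\bigl((\tau-1)y_x+(1-\delta^{-1}\gamma)z_x\bigr)=g_{\D^{pr}}\circ f_{\D^{pr}}(x)=0,
\]
so the element lies in $(\D^r)^{\varphi=1}$. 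Changing $y_x,z_x$ by $\varphi$-fixed elements $(a,b)$ changes the output by $g(a,b)$, which is zero in the quotient. Changing $x$ by $(\varphi-1)w$ changes $(y_x,z_x)$ by $f(w)$ plus $\varphi$-fixed elements, and $g f(w)=0$. For $\delta_4$: we have $d_2(0,0,z)=(\varphi-1)z=0$. For $\delta_5$: an element of $\ker d_2$ gives an element of $\ker g$ modulo $(\varphi-1)\D^r$, and $d_1$ of a triple is sent into $\im f$ modulo $(\varphi-1)$.

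\textbf{Exactness, in order.} (i) \emph{Injectivity of $\delta_1$.} If $(0,y,z)=d_0(w)$, then $\varphi w=w$, so $(y,z)=f(w)$ with $w$ $\varphi$-fixed. (ii) \emph{Exactness at $\ker d_1/\im d_0$.} If $x=(\varphi-1)w$, subtract $d_0(w)$. The first two rows then show that $y,z$ are $\varphi$-fixed, and the third row shows $g(y,z)=0$. (iii) \emph{Exactness at the third term.} The composite $\delta_3\circ\delta_2=0$ because we may take $y_x=y$ and $z_x=z$, and the third row of $d_1$ gives exactly $(\tau-1)y+(1-\delta^{-1}\gamma)z=0$. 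Conversely, if $\delta_3(x)=g(a,b)$ with $a,b$ $\varphi$-fixed, then $(x,y_x-a,z_x-b)\in\ker d_1$. (iv) \emph{Exactness at the fourth term.} By construction, $\delta_4\delta_3(x)=(0,0,g(y_x,z_x))=d_1(x,y_x,z_x)\equiv0$. Conversely, suppose $(0,0,z)=d_1(x,y,w)$. Then $(\gamma-1)x=(\varphi-1)y$ and $(\tau-1)x=(\varphi-1)w$, and $z=g(y,w)$, so $z=\delta_3(x)$. (v) \emph{Exactness at $\ker d_2/\im d_1$.} Clearly $\delta_5\delta_4=0$. If $(x,y)\equiv f(u)$ modulo $(\varphi-1)\D^r\oplus(\varphi-1)\D^r$, write
\[
(x,y)=f(u)+\bigl((\varphi-1)a,(\varphi-1)b\bigr).
\]
Subtracting $d_1(u,-a,-b)$ reduces the class to one of the form $(0,0,z')$, and then $d_2$ forces $\varphi z'=z'$. (vi) \emph{Surjectivity of $\delta_5$.} Given $(x,y)$ with $g(x,y)=(\varphi-1)v$ for some $v\in\D^r$, the triple $(x,y,-v)$ lies in $\ker d_2$.

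\textbf{Main obstacle.} The delicate points are the well-definedness of $\delta_3$ and the exactness at $\ker d_2/\im d_1$. Both require $\delta^{-1}$ to commute with $\varphi$; this holds because $\delta$ is a limit of polynomials in $\tau$. Both also require care with the radii, since $y_x,z_x$ must lie in $\D^r$ and not merely in $\D^{pr}$. For the radii, we would note that $\gamma$ and $\tau$ preserve $\D^r$, and that the entries of $d_1$ and $d_2$ land in the stated spaces.
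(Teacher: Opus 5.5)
Your proposal is correct and follows essentially the same route as the paper: a direct element-by-element chase at each of the six spots, using the same lifts and the same subtractions of elements of $\im d_0$ and $\im d_1$. In step (v) you invoke $d_2\circ d_1=0$ where the paper writes out the computation of $(\varphi-1)$ applied to the third coordinate, and your well-definedness check for $\delta_3$ is slightly more complete than the paper's, since you also verify that the output is $\varphi$-fixed and independent of the representative $x$ modulo $(\varphi-1)\D^r$.
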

    \begin{proof}
        It is easy to see that all maps except $\delta_3$ are well-defined. To see that $\delta_3$ is well-defined, let $x \in \D^{pr}$ represent an element in $\left(\frac{\D^{pr}}{(\varphi - 1)\D^{r}}\right)^{\tau = 1, \gamma = 1}$ and let $y_x'$ and $z_x'$ be another such choice. Then, $(\varphi - 1)(y_x - y_x') = (\gamma - 1)x - (\gamma - 1)x = 0$ and $(\varphi - 1)(z_x - z_x') = (\tau - 1)x - (\tau - 1)x = 0$. So, 
        \[
            [(\tau - 1)y_x + (1 - \delta^{-1}\gamma)z_x] - [(\tau - 1)y_x' + (1 - \delta^{-1}\gamma)z_x'] \in (\tau - 1)(\D^{r})^{\varphi = 1} + (1 - \delta^{-1}\gamma)(\D^{r})^{\varphi = 1}.
        \]
        This shows that $\delta_3$ is well-defined.
        
        One can check that the composition of any two successive maps is $0$. We now check exactness at each module:
        \begin{enumerate}
            \item Suppose $\delta_1(y, z) = 0$. This means that there exists $x \in \D^{r}$ such that $(\varphi - 1)x = 0$, $(\gamma - 1)x = y$ and $(\tau - 1)x = z$. So $x \in (\D^{r})^{\varphi = 1}$ and hence $(y, z) \in \im f_{(\D^{r})^{\varphi = 1}}$.
            \vspace{0.2cm}
            \item Suppose $\delta_2(x, y, z) = 0$. This means that there exists $x' \in \D^{r}$ such that $x = (\varphi - 1)x'$. Define $y' = y - (\gamma - 1)x'$ and $z' = z - (\tau - 1)x'$. We note that $(x, y, z) = (0, y', z')$ in the quotient $\frac{\ker d_1}{\im d_0}$. We check that $(y', z') \in \ker g_{(\D^{r})^{\varphi = 1}}$. Indeed, 
            \[
                (\varphi - 1)y' = (\varphi - 1)y - (\gamma - 1)(\varphi - 1)x' = (\varphi - 1)y - (\gamma - 1)x = 0
            \]
            since $(x, y, z) \in \ker d_1$. This shows that $y' \in (\D^{r})^{\varphi = 1}$. A similar check shows that $z' \in (\D^{r})^{\varphi = 1}$. Finally, $(\tau - 1)y' + (1 - \delta^{-1}\gamma)z' = (\tau - 1)y + (1 - \delta^{-1}\gamma)z - (\tau - 1)(\gamma - 1)x' - (1 - \delta^{-1}\gamma)(\tau - 1)x' = 0$. Therefore $(y', z') \in \ker g_{(\D^{r})^{\varphi = 1}}$ maps to the class of $(x, y, z)$ in $\frac{\ker d_1}{\im d_0}$.
            \vspace{0.2cm}
            \item Suppose $x \in \D^{pr}$ represents a class in $\left(\frac{\D^{pr}}{(\varphi - 1)\D^{r}}\right)^{\tau = 1, \gamma = 1}$ mapping to $0$ under $\delta_3$. Pick $y_x, z_x \in \D^{r}$ such that $(\gamma - 1)x = (\varphi - 1)y_x \text{ and }  (\tau - 1)x = (\varphi - 1)z_x.$
            Now assume that there exist $y_x', z_x' \in (\D^{r})^{\varphi = 1}$ such that $(\tau - 1)y_x + (1 - \delta^{-1}\gamma)z_x = (\tau - 1)y_x' + (1 - \delta^{-1}\gamma)z_x'$. We claim that $(x, y_x - y_x', z_x - z_x') \in \ker d_1$. Indeed, 
            \[
                (\gamma - 1)x + (1 - \varphi)(y_x - y_x') = (\gamma - 1)x + (1 - \varphi)y_x - (1 - \varphi)y_x' = 0.
            \]
            Similarly, $(\tau - 1)x + (1 - \varphi)(z_x - z_x') = (\tau - 1)x + (1 - \varphi)z_x - (1 - \varphi)z_x' = 0$. Finally, we also have $(\tau - 1)(y_x - y_x') + (1 - \delta^{-1}\gamma)(z_x - z_x') = 0$ using the definition of $y_x'$ and $z_x'$.
            \vspace{0.2cm}
            \item Suppose $z  \in (\D^{r})^{\varphi = 1}$ represents a class in $\frac{(\D^{r})^{\varphi = 1}}{(\tau - 1, 1 - \delta^{-1}\gamma)}$ mapping to $0$ under $\delta_4$. This means that there exists $(x', y', z') \in \D^{pr} \oplus \D^{r} \oplus \D^{r}$ such that $d_1(x', y', z') = (0, 0, z)$. Writing this explicitly, we see that $x' \in \left(\D^{pr}/(\varphi - 1)\D^{r}\right)^{\tau = 1, \gamma = 1}$ and $(\tau - 1)y' + (1 - \delta^{-1}\gamma)z' = z$. So, $z = \delta_3(x')$.
            \vspace{0.2cm}
            \item Suppose $\delta_5(x, y, z) = 0$. This means that there exist $x', y' \in \D^{r}$ and $z' \in \D^{pr}$ such that $(\gamma - 1)z' = x + (\varphi  - 1)x'$ and $(\tau - 1)z' = y + (\varphi - 1)y'$. Therefore, 
            \[
                (x, y, z) - d_1(z', x', y') = (0, 0, z - (\tau - 1)x' - (1 - \delta^{-1}\gamma)y').
            \]
            To show that this element belongs to the image of $\delta_4$, we only need to show that $z - (\tau - 1)x' - (1 - \delta^{-1}\gamma)y' \in (\D^{r})^{\varphi = 1}$. Indeed, 
            \begin{eqnarray*}
                (\varphi - 1)(z - (\tau - 1)x' - (1 - \delta^{-1}\gamma)y') & = & (\varphi - 1)z - (\tau - 1)\left((\gamma - 1)z' - x\right) \\ 
                && \quad - (1 - \delta^{-1}\gamma)((\tau - 1)z' - y) \\
                & = & d_2(x, y, z) = 0.
            \end{eqnarray*}
            
            \item Suppose $x, y \in \D^{pr}$ are such that $(x, y) \in \ker g_{(\D^{pr}/(\varphi - 1)\D^{r})}$. Then, there exists $z \in \D^{r}$ such that $(\tau - 1)x + (1 - \delta^{-1}\gamma)y = (\varphi - 1)(-z)$. This shows that $(x, y, z) \in \ker d_2$ and its class in $\frac{\ker d_2}{\im d_1}$ maps to the class of $(x, y)$ in $\frac{\ker g_{(\D^{pr}/(\varphi - 1)\D^{r})}}{\im f_{(\D^{pr}/(\varphi - 1)\D^{r})}}$ under $\delta_5$. \qedhere
        \end{enumerate}
    \end{proof}

\section{Explicit maps from group cohomology to cohomology of $(\varphi, \tau)$-modules}

    To show that $H^1(\G_K, V)$ is isomorphic to $\frac{\ker d_1}{\im d_0}$ and that $H^2(\G_K, V)$ is isomorphic to $\frac{\ker d_2}{\im d_1}$, we produce maps between complex \eqref{DHW exact sequence} and the complex written in Proposition~\ref{Substitute for 6 term HS}, and use the five-lemma.

\subsection{Computations for $H^1(\G_K, V)$}
    The following proposition gives us an explicit map from $H^1(\G_K, V)$ to $\frac{\ker d_1}{\im d_0}$.

    \begin{prop}\label{Definition of h1}
        Let $\alpha \in H^1(\G_K, V)$. Then, there exists a $1$-cocycle $c : \G_K \to V$ representing $\alpha$ such that $c(h) = (h - 1)s^{(pr)}x$ for some $x \in \Dt^{\dagger, pr}_L(V)$ congruent to $\eta^{(pr)}(\alpha)$ mod $(\varphi - 1)\Dt^{\dagger, r}_L(V)$. Moreover, for any $g \in \G_K$, the element $c(g) - (g - 1)s^{(pr)}x \in (S \hat{\otimes}_{\Qp} \Bt^{\dagger, r}) \otimes_S V$ belongs to $\Dt^{\dagger, r}_L(V)$, i.e., it is $H_{\infty}$-fixed. Furthermore, the map
        \begin{eqnarray*}
                h_1 : H^1(\G_K, V) & \to & \frac{\ker d_1}{\im d_0} \\
                \alpha & \mapsto & (-x, c(\tgamma) - (\tgamma - 1)s^{(pr)}x, c(\ttau) - (\ttau - 1)s^{(pr)}x) \mod \im d_0
            \end{eqnarray*}
            is independent of the choices of $c$ and $x$, and is an $S$-linear homomorphism.
    \end{prop}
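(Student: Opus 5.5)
Start with an arbitrary $1$-cocycle $c_0$ representing $\alpha$. Its restriction to $H_{\infty}$ is a $1$-cocycle whose class is $\mathrm{res}(\alpha)\in H^1(H_{\infty},V)$. By the explicit description of $\eta^{(pr)}$ at the end of Section~\ref{Cohomology of Hinfty modules}, choose $x\in\Dt^{\dagger,pr}_L(V)$ representing $\eta^{(pr)}(\mathrm{res}\,\alpha)$, where we write $\eta^{(pr)}(\alpha)$ for this by abuse. Then $c_0|_{H_\infty}$ and $h\mapsto (h-1)s^{(pr)}x$ differ by a coboundary $h\mapsto (h-1)v$ for some $v\in V$. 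Set $c=c_0-\partial v$. This is a cocycle in the same class with $c(h)=(h-1)s^{(pr)}x$ for $h\in H_\infty$.

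For $H_\infty$-invariance of $w(g):=c(g)-(g-1)s^{(pr)}x$, note that $w$ is itself a $1$-cocycle of $\G_K$ with values in $(S\hat\otimes\Bt^{\dagger,r})\otimes_SV$, being $c$ minus a coboundary, and it vanishes on $H_\infty$. The cocycle relation gives $w(hg)=w(h)+hw(g)=hw(g)$ and $w(gh')=w(g)+gw(h')=w(g)$. Since $H_\infty$ is normal, $hg=g(g^{-1}hg)$, so $hw(g)=w(g)$. Hence $w(g)$ is $H_\infty$-fixed, and by Lemma~\ref{Hinfty fixed points} together with Theorem~\ref{theo ana families}(2) it lies in $\D^r$.

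Next we check that $(-x,w(\tgamma),w(\ttau))\in\ker d_1$. Applying $\varphi-1$ to $w(g)$ and using $(\varphi-1)c(g)=0$ (since $c$ is $V$-valued) and $(\varphi-1)s^{(pr)}=\mathrm{id}$, we get $(\varphi-1)w(g)=-(g-1)x$. This yields the first two rows of $d_1$. For the third row, observe that $w$ descends to a cocycle $G_\infty\to\D^r$, because it is inflated from $G_\infty$ by the displayed identities. By the computation in Proposition~\ref{Explicit Ginfty cohomology}, a cocycle satisfies $(\tau-1)w(\gamma)+(1-\delta^{-1}\gamma)w(\tau)=0$, since $(w(\gamma),w(\tau))\in\ker g_M$. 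One must check that the values at the lifts $\tgamma,\ttau$ are the values at $\gamma,\tau$ of the descended cocycle; this holds because $w$ is constant on $H_\infty$-cosets.

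For independence, there are two sources of choice. Changing $x$ to $x+(\varphi-1)u$ with $u\in\D^r$ forces a change of $c$ on $H_\infty$. Changing $c$ by a coboundary $\partial v$ compatible with the same $x$ forces $(h-1)v=0$ for all $h\in H_\infty$, so $v\in V^{H_\infty}\subset \D^r$; the triple then changes by $(0,(\gamma-1)v,(\tau-1)v)=d_0(v)$, because $(\varphi-1)v=0$. For the change $x\mapsto x+(\varphi-1)u$, note $s^{(pr)}(\varphi-1)u-u\in\ker(\varphi-1)=V$. Take $c'=c+\partial v'$ where $v'$ is this element (up to sign); the triple then changes by $d_0(u)$ up to sign. $S$-linearity and additivity follow from the $S$-linearity of $s^{(pr)}$, since sums of valid choices $(c,x)$ are valid choices for sums. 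The main obstacle is the third-row identity: verifying carefully that the passage from $w$ on $\G_K$ to $G_\infty$ and the operator $\delta^{-1}$ are consistent with the chosen section $\tgamma,\ttau$.
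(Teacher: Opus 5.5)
Your proposal is correct, and most of it is the paper's argument in slightly tidier form. The normalization $c=c_0-\partial v$ is the same. Your observation that $w=c-\partial(s^{(pr)}x)$ is a cocycle vanishing on the normal subgroup $H_\infty$ repackages the paper's direct computation of $h\bigl(c(g)-(g-1)s^{(pr)}x\bigr)$. Your two-step independence argument covers the same ground as the paper's three bullets: first move $x$ by $(\varphi-1)u$ and correct $c$ by $\partial v'$ with $v'=s^{(pr)}(\varphi-1)u-u\in V$, which changes the triple by $-d_0(u)$; then compare two cocycles normalized for the same $x$, which differ by $\partial v$ with $v\in V^{H_\infty}$.

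The real difference is the third row of $d_1$. The paper (Lemma~\ref{Triple is in ker d1}) stays on $\G_K$. It expands $(\tau^{\chi(\gamma)}-1)w(\tgamma)+(\delta-\gamma)w(\ttau)$ through the approximations $\chi_n(\gamma)$ and the cocycle relation. It then kills the discrepancy $h\in H_\infty$, defined by $\ttau^{\chi(\gamma)}\tgamma=\tgamma\ttau h$, using $c(h)=(h-1)s^{(pr)}x$. You instead use that $w$ is constant on $H_\infty$-cosets to descend it to a continuous cocycle $\bar w\colon G_\infty\to\D^r$, continuous because $\G_K\to G_\infty$ is a quotient map. After that the section plays no role, so the ``main obstacle'' you flag at the end is not actually an obstacle. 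This route is cleaner.

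However, do not lean only on Proposition~\ref{Explicit Ginfty cohomology}. Its statement presupposes that every cocycle satisfies $(c(\gamma),c(\tau))\in\ker g_M$, but its proof only argues at the level of classes modulo $(\tau-1)M$. Supply the one line yourself: $\gamma\tau=\tau^{\chi(\gamma)}\gamma$ and $\bar w(\tau^{\chi(\gamma)})=\delta\,\bar w(\tau)$ (by continuity) give $(\tau^{\chi(\gamma)}-1)\bar w(\gamma)+(\delta-\gamma)\bar w(\tau)=0$. Applying $\delta^{-1}$ then yields the third row.
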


    \begin{proof}
        Let $\alpha \in H^1(\G_K, V)$ be represented by a $1$-cocycle $c'$. Let $x \in \Dt^{\dagger, pr}_L(V)$ represent the class $\eta^{(pr)}([c'\vert_{H_{\infty}}])$ modulo $(\varphi - 1)\Dt^{\dagger, r}_L(V)$. Then, by the discussion at the end of Section~\ref{Cohomology of Hinfty modules}, there exists $v \in V$ such that $c'(h) = (h - 1)s^{(pr)}x + (h - 1)v$. Then, the $1$-cocycle $c : \G_K \to V$ defined by $c(g) = c'(g) - (g - 1)v$ also represents $\alpha$ and $c(h) = (h - 1)s^{(pr)}x$ for all $h \in H_{\infty}$.
    
        For any $h \in H_{\infty}$ and $g \in \G_K$, the cocycle condition implies that
        \[
            h\left(c(g) - (g - 1)s^{(pr)}x\right) = c(hg) - c(h) - hg s^{(pr)}x + hs^{(pr)}x.
        \]
        Using $c(h) = (h - 1)s^{(pr)}x$, we write
        \[
            c(hg) - c(h) - hgs^{(pr)}x + hs^{(pr)}x = c(hg) - hgs^{(pr)}x + s^{(pr)}x.
        \]
        Since $H_{\infty}$ is a normal subgroup of $\G_K$, there exists $h' \in H_{\infty}$ such that $hg = gh'$. So we substitute this in the RHS of the equation above and use the cocycle condition to write
        \[  
            c(gh') - gh's^{(pr)}x + s^{(pr)}x = c(g) + gc(h') - gh' s^{(pr)}x + s^{(pr)}x.
        \]
        Now we substitute $c(h') = (h' - 1)s^{(pr)}x$ in the last expression to get
        \[
            c(g) + gc(h') - gh' s^{(pr)}x + s^{(pr)}x = c(g) - (g - 1)s^{(pr)}x.
        \]
        Therefore we have shown that $H_{\infty}$ fixes $c(g) - (g - 1)s^{(pr)}x$. This shows that the tuple $$(-x, c(\tgamma) - (\tgamma - 1)s^{(pr)}x, c(\ttau) - (\ttau - 1)s^{(pr)}x)$$ belongs to $\Dt^{\dagger, pr}_L(V) \oplus \Dt^{\dagger, r}_L(V) \oplus \Dt^{\dagger, r}_L(V)$. 

        Now we show that $h_1$ is independent of the choices of $v, x$, and $c'$.
        \begin{itemize}
            \item Fix $c'$ and $x$. Let $v$ and $w$ be elements in $V$ such that $c'(h) = (h - 1)s^{(pr)}x + (h - 1)v$ and $c'(h) = (h - 1)s^{(pr)}x + (h - 1)w$. In particular, $w - v \in V^{H_{\infty}}\subset \Dt^{\dagger, r}_L(V)$. Let $c_v, c_w : \G_K \to V$ be defined by $c_v(g) = c'(g) - (g - 1)v$ and $c_w(g) = c'(g) - (g - 1)w$. Then, the difference
            \begin{eqnarray*}
                 && \left(-x, c_v(\tgamma) - (\tgamma - 1)s^{(pr)}x, c_v(\ttau) - (\ttau - 1)s^{(pr)}x\right) - \left(-x, c_w(\tgamma) - (\tgamma - 1)s^{(pr)}x, c_w(\ttau) - (\ttau - 1)s^{(pr)}x\right) \\
                && = (0, (\tgamma - 1)(w - v), (\ttau - 1)(w - v)) \\
                && = d_0(w - v)
            \end{eqnarray*}
            is clearly in $\im d_0$.
            \item Fix $c'$. Let $x, y \in \Dt^{\dagger, pr}_L(V)$ both represent the class $\eta^{(pr)}([c'\vert_{H_{\infty}}])$. Let $v_x, v_y \in V$ be elements such that
            \[
                c'(h) = (h - 1)s^{(pr)}x + (h - 1)v_x, \quad c'(h) = (h - 1)s^{(pr)}y + (h - 1)v_y.
            \]
            In particular, $s^{(pr)}y + v_y - s^{(pr)}x - v_x \in \Dt^{\dagger, r}_L(V)$. Let $c_x, c_y : \G_K \to V$ be defined by $c_x(g) = c'(g) - (g - 1)v_x$ and $c_y(g) = c'(g) - (g - 1)v_y$. Then, the difference
            \begin{eqnarray*}
                 && \left(-x, c_x(\tgamma) - (\tgamma - 1)s^{(pr)}x, c_x(\ttau) - (\ttau - 1)s^{(pr)}x \right) - \left(-y, c_y(\tgamma) - (\tgamma - 1)s^{(pr)}y, c_y(\ttau) - (\ttau - 1)s^{(pr)}y \right) \\
                && = (y - x, (\tgamma - 1)(v_y - v_x + s^{(pr)}y - s^{(pr)}x), (\ttau - 1)(v_y - v_x + s^{(pr)}y - s^{(pr)}x)) \\
                && = d_0(v_y - v_x + s^{(pr)}y - s^{(pr)}x)
            \end{eqnarray*}
            is in $\im d_0$.

                \item Let $d, e : \G_K \to V$ represent $\alpha$. Therefore, there exists $v \in V$ such that $d(g) - e(g) = (g - 1)v$. Let $x_d, x_e \in \Dt^{\dagger, pr}_L(V)$ represent $\eta^{(pr)}([d\vert_{H_{\infty}}]) = \eta^{(pr)}([e\vert_{H_{\infty}}])$. Let $v_d, v_e \in V$ be such that
                \[
                    d(h) = (h - 1)s^{(pr)}x_d + (h - 1)v_d, \quad e(h) = (h - 1)s^{(pr)}x_e + (h - 1)v_e.
                \]
                In particular, $v - v_d + v_e - s^{(pr)}x_d + s^{(pr)}x_e \in \Dt^{\dagger, r}_L(V)$. Let $c_d, c_e : \G_K \to V$ be defined by $c_d(g) = d(g) - (g - 1)v_d$ and $c_e(g) = e(g) - (g - 1)v_e$. Then, the difference
                \begin{eqnarray*}
                    && \left(-x_d, c_d(\tgamma) - (\tgamma - 1)s^{(pr)}x_d, c_d(\ttau) - (\ttau - 1)s^{(pr)}x_d\right) - \left(-x_e, c_e(\tgamma) - (\tgamma - 1)s^{(pr)}x_e, c_e(\ttau) - (\ttau - 1)s^{(pr)}x_e\right) \\
                    && = \left(x_e - x_d, (\tgamma - 1)(v - v_d + v_e - s^{(pr)}x_d + s^{(pr)}x_e), (\ttau - 1)(v - v_d + v_e - s^{(pr)}x_d + s^{(pr)}x_e)\right) \\
                    && = d_0(v - v_d + v_e - s^{(pr)}x_d + s^{(pr)}x_e)
                \end{eqnarray*}
                is in $\im d_0$.
        \end{itemize}
        Lemma~\ref{Triple is in ker d1} below shows that the image of $h_1$ actually belongs to $\dfrac{\ker d_1}{\im d_0}$. The fact that $h_1$ is an $S$-linear homomorphism is easy to check.
    \end{proof}

    \begin{lemm}\label{Triple is in ker d1}
        With the same assumptions as Lemma~\ref{Definition of h1}, we have
        \begin{enumerate}
            \item $(g - 1)(-x) + (1 - \varphi)(c(g) - (g - 1)s^{(pr)}x) = 0$,
            \item 
                $(\tau - 1)(c(\tgamma) - (\tgamma - 1)s^{(pr)}x) + (1 - \delta^{-1}\gamma)(c(\ttau) - (\ttau - 1)s^{(pr)}x) = 0.$
        \end{enumerate}
        \begin{proof}
            Since the action of $\G_K$ commutes with $\varphi$, we see that $(g - 1)(-x) - (1 - \varphi)(g - 1)s^{(pr)}x = 0$. Also, since $c(g) \in V$, we have 
            $(1 - \varphi)c(g) = 0$. This proves part $1$.

            Next, we prove part $2$. It suffices to show that
            \[
                (\tau^{\chi(\gamma)} - 1)(c(\tgamma) - (\tgamma - 1)s^{(pr)}x) + (\delta - \gamma)(c(\ttau) - (\ttau - 1)s^{(pr)}x) = 0.
            \]
            
            For $n \geq 0$, let $\chi_n(\gamma)$ be a sequence of non-negative integers converging to $\chi(\gamma)$ in the $p$-adic topology. Therefore $\ttau^{\chi_n(\gamma)} \to \ttau^{\chi(\gamma)}$. We prove
            \[
                (\ttau^{\chi(\gamma)} - 1)(c(\tgamma) - (\tgamma - 1)s^{(pr)}x) + \left((1 + \ttau + \cdots + \ttau^{\chi_n(\gamma) - 1}) - \tgamma\right)(c(\ttau) - (\ttau - 1)s^{(pr)}x) \to 0.
            \]
            Using the cocycle condition, we write the last expression as
            \begin{eqnarray*}
                && c(\ttau^{\chi(\gamma)}\tgamma) - c(\ttau^{\chi(\gamma)}) - c(\tgamma) - \ttau^{\chi(\gamma)}\tgamma s^{(pr)}x + \ttau^{\chi(\gamma)} s^{(pr)}x + \tgamma s^{(pr)}x - s^{(pr)}x \\
                && + c(\tilde{\tau}^{\chi_n(\gamma)}) - \tgamma c(\ttau) - \tilde{\tau}^{\chi_n(\gamma)}s^{(pr)}x + s^{(pr)}x + \tgamma\ttau s^{(pr)}x - \tgamma s^{(pr)}x.
            \end{eqnarray*}
            Letting $n \to \infty$, we see that the second term in the first line cancels with the first term in the second line, and the fifth term in the first line cancels with the third term in the second line. Therefore, after taking limits, making some cancellations and applying the cocycle condition, the expression above becomes
            \[
                c(\ttau^{\chi(\gamma)}\tgamma) - c(\tgamma\ttau) - \ttau^{\chi(\gamma)}\tgamma s^{(pr)}x + \tgamma\ttau s^{(pr)}x.
            \]
            Writing $\ttau^{\chi(\gamma)}\tgamma = \tgamma\ttau h$ for some $h \in H_{\infty}$ and applying the cocycle condition again, the expression above becomes
           
            \[
                \tgamma\ttau c(h) - \tgamma\ttau (h - 1)s^{(pr)}x.
            \]
            This is $0$ since $c(h) = (h - 1)s^{(pr)}x$.
        \end{proof}
    \end{lemm}

\subsection{Computations for $H^2(\G_K, V)$} Now that we have defined the map $h_1 : H^1(\G_K, V) \to \frac{\ker d_1}{\im d_0}$, we construct a map $h_2 : H^2(\G_K, V) \to \frac{\ker d_2}{\im d_1}$.

We henceforth make the following assumption for ease of exposition. This will be removed in Remark~\ref{Remove Tors}. 

\vspace{.2cm}

\noindent{\textbf{(Tors)}}:  We assume that $\mu_p \subset K$, so $\langle \gamma \rangle \simeq \Zp$. 
\vspace{.2cm}

This hypothesis implies that we may choose a $\tgamma \in \G_K$ such that $\langle \tgamma \rangle \simeq \Z_p$.

    \begin{lemm}\label{x and y as 2-cocycles}
        The functions $\beta_{\tgamma} : \langle \tgamma \rangle \times \langle \tgamma \rangle \to V$ and $\beta_{\ttau} : \langle \ttau \rangle \times \langle \ttau \rangle \to V$ defined by
        \[
            \beta_{\tgamma}(\tgamma^a, \tgamma^b) = s^{(pr)}\gamma^a \frac{\gamma^b - 1}{\gamma - 1}x - \tgamma^a s^{(pr)}\frac{\gamma^b - 1}{\gamma - 1}x, \quad \beta_{\ttau}(\ttau^a, \ttau^b) = s^{(pr)}\tau^a\frac{\tau^b - 1}{\tau - 1}y - \ttau^a s^{(pr)}\frac{\tau^b - 1}{\tau - 1}y
        \]
        are $2$-cocycles.
    \end{lemm}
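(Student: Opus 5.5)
Throughout, $x$ and $y$ denote the lifts to $\D^{pr}$ fixed above (they are arbitrary for this lemma). I will prove the statement for $\beta_{\tgamma}$; the argument for $\beta_{\ttau}$ is identical with $\gamma,\tgamma,x$ replaced by $\tau,\ttau,y$.

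The key observation is that $\beta_{\tgamma}$ is a coboundary in the larger module $W := (S\hat{\otimes}\Bt^{\dagger,r})\otimes_S V$, taken up to an error term that is killed by the cocycle identity. For $a\in\Zp$ set
\[
u(\tgamma^a) := s^{(pr)}\tfrac{\gamma^a-1}{\gamma-1}x \in W .
\]
This is well defined: $a\mapsto \frac{\gamma^a-1}{\gamma-1}$ is the continuous extension of $a\mapsto 1+\gamma+\cdots+\gamma^{a-1}$ from $\N$ to $\Zp$, exactly as for $\delta$. It is also continuous in $a$, since $s^{(pr)}$ is continuous. On $\D^{pr}$ the operators $\gamma$ and $\tgamma$ agree, because $\D^{pr}$ is $H_\infty$-fixed.

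First I check the additivity identity
\[
\tfrac{\gamma^{a+b}-1}{\gamma-1}=\tfrac{\gamma^{a}-1}{\gamma-1}+\gamma^a\tfrac{\gamma^{b}-1}{\gamma-1}.
\]
It holds for $a,b\in\N$ by telescoping, and extends to all $a,b\in\Zp$ by continuity. Applying $s^{(pr)}$, which is additive but not $\tgamma$-equivariant, gives
\[
u(\tgamma^{a+b}) = u(\tgamma^a) + s^{(pr)}\gamma^a\tfrac{\gamma^b-1}{\gamma-1}x .
\]
Comparing with the definition of $\beta_{\tgamma}$, this rewrites as
\[
\beta_{\tgamma}(\tgamma^a,\tgamma^b)=u(\tgamma^a)+\tgamma^a u(\tgamma^b)-u(\tgamma^{a+b}) = (\partial u)(\tgamma^a,\tgamma^b).
\]
So $\beta_{\tgamma}=\partial u$ as $W$-valued cochains on $\langle\tgamma\rangle\simeq\Zp$, which uses hypothesis \textbf{(Tors)}. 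In particular $\partial\beta_{\tgamma}=\partial\partial u=0$, so the cocycle condition holds in $W$, and hence in $V$.

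It remains to show that $\beta_{\tgamma}$ takes values in $V=\ker(\varphi-1)$, the kernel of the surjection $\varphi-1:W\to (S\hat\otimes\Bt^{\dagger,pr})\otimes_S V$ from the proof of Proposition~\ref{Hinfty cohomology}. I apply $\varphi-1$ to $\beta_{\tgamma}(\tgamma^a,\tgamma^b)$. Since $s^{(pr)}$ is a section of $\varphi-1$ and $\varphi$ commutes with the Galois action, we get
\[
(\varphi-1)\beta_{\tgamma}(\tgamma^a,\tgamma^b)=\gamma^a\tfrac{\gamma^b-1}{\gamma-1}x-\tgamma^a\tfrac{\gamma^b-1}{\gamma-1}x=0,
\]
using again that $\tgamma$ acts as $\gamma$ on $\D^{pr}$. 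Continuity of $\beta_{\tgamma}$ follows from continuity of $s^{(pr)}$ and of the group action.

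The only point needing care is making $u(\tgamma^a)$ and the additivity identity rigorous for $a\in\Zp$ rather than $a\in\N$. This is the same density-and-continuity argument the paper uses for $\delta$, relying on continuity of the $\langle\gamma\rangle$-action on $\D^{pr}$, so I expect no real obstacle. For $\beta_{\ttau}$ one uses $\tilde{\tau^a}=\ttau^a$ and $\langle\ttau\rangle\simeq\Zp$ in the same way.
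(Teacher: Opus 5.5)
The paper leaves this lemma as an exercise, and your argument is a correct way to do it. Writing $\beta_{\tgamma}$ as a coboundary of the $W$-valued cochain $u(\tgamma^a)=s^{(pr)}\frac{\gamma^a-1}{\gamma-1}x$ gives the cocycle identity from $\partial\circ\partial=0$; this is the same cancellation a direct expansion produces, using only additivity of $s^{(pr)}$ and $\frac{\gamma^{a+b}-1}{\gamma-1}=\frac{\gamma^a-1}{\gamma-1}+\gamma^a\frac{\gamma^b-1}{\gamma-1}$. Applying $\varphi-1$ then correctly shows that the values lie in $V$.

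One small slip: with the convention $\partial u(\sigma_1,\sigma_2)=u(\sigma_1)-u(\sigma_1\sigma_2)+\sigma_1u(\sigma_2)$, your computation actually gives $\beta_{\tgamma}=-\partial u$, not $\partial u$. This does not affect the conclusion.
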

    \begin{proof}
        This proof is left as an exercise to the reader.
    \end{proof}
    
    \begin{lemm}\label{cocycle value is 0 if second coordinate in H}
        Every class $\alpha \in H^2(\G_K, V)$ can be represented by a cocycle $c: \G_K \times \G_K \to V$ such that $c(g, h) = 0$ for all $g \in \G_K$, $h \in H_{\infty}$.
    \end{lemm}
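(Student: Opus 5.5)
Given a class $\alpha \in H^2(\G_K, V)$ represented by an arbitrary continuous $2$-cocycle $c'$, the plan is to modify $c'$ by a coboundary $\partial b$ so that the new cocycle vanishes whenever the second argument lies in $H_{\infty}$.

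The key input is Proposition~\ref{Hinfty cohomology}(3), which gives $H^2(H_{\infty}, V) = 0$. One could try to apply this to the restriction $c'\vert_{H_{\infty}\times H_{\infty}}$, but that only controls $c'$ on $H_\infty \times H_\infty$, not on $\G_K \times H_\infty$. So the cleaner route is to use the continuous section $g \mapsto \tilde{g}$ of $\G_K \to G_{\infty}$ fixed in the introduction. This gives a homeomorphism $\G_K \simeq G_\infty \times H_{\infty}$, $g = \tilde{\bar g}\, h_g$ with $h_g := \tilde{\bar g}^{-1}g \in H_\infty$ depending continuously on $g$. The idea is to build $b$ first on $H_{\infty}$ and then extend it to $\G_K$ via this decomposition.

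\textbf{Step 1.} Since $H^2(H_{\infty},V)=0$, there is a continuous $1$-cochain $b_0 : H_\infty \to V$ with $c'(h_1,h_2) = \partial b_0(h_1,h_2)$ for $h_1,h_2 \in H_\infty$.

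\textbf{Step 2.} Define $b : \G_K \to V$ by
\[
b(g) := c'(\tilde{\bar g}, h_g) + \tilde{\bar g}\, b_0(h_g).
\]
This is continuous because the section and $g\mapsto h_g$ are continuous. Note that for $g=h\in H_\infty$ we have $\tilde{\bar g}=\tilde 1 = 1$; I would check that the section sends the identity to $1$, which holds since $\tilde{\gamma^0\tau^0}=\tilde{\gamma^0}\tilde{\tau^0}$ and $\ttau^0=1$, choosing $\tilde{\gamma^0}=1$. Normalizing $c'$ so that $c'(1,\cdot)=0$ (possible after subtracting a constant coboundary), we get $b\vert_{H_\infty}=b_0$.

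\textbf{Step 3.} Set $c = c' - \partial b$ and verify $c(g,h)=0$ for $g \in \G_K$, $h\in H_\infty$. Write $g = s k$ with $s=\tilde{\bar g}$ and $k=h_g$. Then $gh = s(kh)$ with $kh \in H_\infty$ and $\tilde{\overline{gh}}=s$. We compute
\[
\partial b(g,h) = b(g) - b(gh) + g\,b(h) = c'(s,k) - c'(s,kh) + s\,b_0(k) - s\,b_0(kh) + sk\,b_0(h).
\]
The $b_0$-terms combine to $s\,\partial b_0(k,h) = s\,c'(k,h)$. Applying the cocycle identity for $c'$ to the triple $(s,k,h)$,
\[
s\,c'(k,h) - c'(sk,h) + c'(s,kh) - c'(s,k) = 0,
\]
shows that $\partial b(g,h) = c'(g,h)$, as required.

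The main obstacle is Step 1: Proposition~\ref{Hinfty cohomology} is phrased for continuous cohomology, so one must make sure the cochain $b_0$ it produces is genuinely continuous. This is built into the definition of continuous cohomology, so the vanishing of the class gives a continuous $b_0$. The remaining work is bookkeeping with the normalization of $c'$.
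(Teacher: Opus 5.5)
Your strategy is the same as the paper's: kill $c'$ on $H_{\infty}\times H_{\infty}$ using $H^2(H_{\infty},V)=0$, then extend the trivializing cochain to $\G_K$. The extension goes through the decomposition $g=\tilde{\bar g}\,h_g$ given by the fixed section, with a correction term built from $c'(\tilde{\bar g},h_g)$. However, the correction term in Step 2 has the wrong sign, and the identity claimed in Step 3 is false as written.

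With $b(sk)=c'(s,k)+s\,b_0(k)$ you correctly get $\partial b(g,h)=c'(s,k)-c'(s,kh)+s\,c'(k,h)$. But the cocycle identity for $(s,k,h)$ says $s\,c'(k,h)=c'(sk,h)-c'(s,kh)+c'(s,k)$, so
\[
\partial b(g,h)=c'(g,h)+2\bigl(c'(s,k)-c'(s,kh)\bigr),
\]
not $c'(g,h)$. Concretely, take $g=s$ in the image of the section, so $k=1$. Your normalizations give $b(s)=0$ and $\partial b(s,h)=-c'(s,h)$. Hence $c(s,h)=2c'(s,h)$, which need not vanish.

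The fix is to define $b(g):=-c'(\tilde{\bar g},h_g)+\tilde{\bar g}\,b_0(h_g)$. Then $b\vert_{H_{\infty}}=b_0$ still holds by the normalization $c'(1,\cdot)=0$. The same computation now gives
\[
\partial b(g,h)=-c'(s,k)+c'(s,kh)+s\,c'(k,h)=c'(sk,h)=c'(g,h),
\]
so $c=c'-\partial b$ vanishes on $\G_K\times H_{\infty}$.

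With $f'=-b$ this is exactly the paper's construction: $f'(\tgamma^a\ttau^b h)=c'(\tgamma^a\ttau^b,h)+\tgamma^a\ttau^b f'(h)$ and $c=c'+\partial f'$. The only difference is in the verification. The paper first checks $c(\tgamma^a\ttau^b,h)=0$, then deduces $c(\tgamma^a\ttau^b h_1,h_2)=0$ from the cocycle identity and $c\vert_{H_{\infty}\times H_{\infty}}=0$. Your corrected computation handles a general $g$ in one step.
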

    \begin{proof}
        Let $c'$ be a normalized representative for $\alpha$, i.e., $c'(g, 1) = 0 = c'(1, g)$ for all $g \in \G_K$.
 Since $c'\vert_{H_{\infty} \times H_{\infty}}$ represents the zero class in $H^2(H_{\infty}, V)$, we see that there exists $f' : H_{\infty} \to V$ such that $c' + \partial f' = 0$ on $H_{\infty}\times H_{\infty}$. We extend $f' \text{ to } \G_K$ by setting $f'(\tgamma^a\ttau^b) = 0$ for all $a, b \in \Zp$. Further we set $f'(\tgamma^a\ttau^b h)$ as 
\[
            f'(\tgamma^a\ttau^b h) = c'(\tgamma^a\ttau^b, h)  + \tgamma^a\ttau^b f'(h).
        \]
         With $c=c'+\partial f'$, one checks that $c(\tgamma^a\ttau^b, h) = 0$. Hence from the cocycle condition we get that 
          \[
            c(\tgamma^a\ttau^b h_1, h_2) = -c(\tgamma^a\ttau^b, h_1) + c(\tgamma^a\ttau^b, h_1h_2) + \tgamma^a\ttau^b c(h_1, h_2) = 0
        \]
        for any $a, b \in \Zp$ and $h_1, h_2 \in H_{\infty}$.

    \end{proof}

    \begin{lemm}\label{Existence of a c}
        Let $\alpha$ be an element in  $H^2(\G_K, V)$. Choose $x$ and $y \in \Dt^{\dagger, pr}_L(V)$ congruent to $\eta^{(pr)}\Big(\rho(\alpha)(\gamma)\Big)$ and $ \eta^{(pr)}\Big(\rho(\alpha)(\tau)\Big)$ respectively, modulo $(\varphi - 1)\Dt^{\dagger, r}_L(V)$. Then  $\alpha$ can be represented by a cocycle $c : \G_K \times \G_K \to V$ such that
        \begin{enumerate}
            \item $c(g, h) = 0$ for all $g \in \G_K, h \in H_{\infty}$,
            \item $c(h, \tgamma) = (1 - h)s^{(pr)}x$, $c(h, \ttau) = (1 - h)s^{(pr)}y$,
            \item $c(\tgamma^a, \tgamma) = s^{(pr)}\gamma^a x - \tgamma^a s^{(pr)}x$ for all $a \in \Zp$,
            \item $c(\ttau^a, \ttau) = s^{(pr)}\tau^a y - \ttau^a s^{(pr)}y$ for all $a \in \Zp$,
            \item $c(\tgamma^a, \ttau^b) = s^{(pr)}\gamma^a \frac{\tau^b - 1}{\tau - 1}y - \tgamma^a s^{(pr)}\frac{\tau^b - 1}{\tau - 1}y$ for all $a, b \in \Zp$.
        \end{enumerate}
    \end{lemm}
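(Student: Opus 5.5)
Start from a cocycle $c_0$ representing $\alpha$ as provided by Lemma~\ref{cocycle value is 0 if second coordinate in H}, so that $c_0(g,h)=0$ for $g\in\G_K$, $h\in H_\infty$. We will modify $c_0$ by coboundaries $\partial f$ with $f:\G_K\to V$ satisfying $f\vert_{H_\infty}=0$ and $f(gh)=f(g)$ for $h\in H_\infty$. Such $f$ give $\partial f(g,h)=f(g)-f(gh)+gf(h)=0$, so property (1) is preserved. The function $f$ is determined by its values on the section $\tgamma^a\ttau^b$, which we choose freely as continuous functions of $(a,b)$.

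\textbf{Property (2).} For a cocycle vanishing on $\G_K\times H_\infty$, the map $\rho$ described after \eqref{DHW exact sequence} gives $d_g(h)=c(g,g^{-1}hg)-c(h,g)=-c(h,g)$. Hence $h\mapsto -c_0(h,\tgamma)$ is a $1$-cocycle on $H_\infty$ whose class is $\rho(\alpha)(\gamma)$. By the explicit description of $\eta^{(pr)}$ at the end of Section~\ref{Cohomology of Hinfty modules}, there is $v_\gamma\in V$ with $-c_0(h,\tgamma)=(h-1)(s^{(pr)}x+v_\gamma)$, and similarly a $v_\tau$ for $\ttau$ and $y$. Since $\partial f(h,g)=f(h)-f(hg)+hf(g)=(h-1)f(g)$ for $f$ of the above type, setting $f(\tgamma)=v_\gamma$ and $f(\ttau)=v_\tau$ yields (2).

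\textbf{Properties (3)--(5).} The same computation for arbitrary $\tgamma^a\ttau^b$ shows that $h\mapsto c(h,\tgamma^a\ttau^b)$ is cohomologous to $(1-h)s^{(pr)}(\text{lift of }\rho(\alpha)(\gamma^a\tau^b))$. The cocycle $\rho(\alpha)$ evaluated at $\gamma^a$ is $\frac{\gamma^a-1}{\gamma-1}x$, and at $\tau^b$ it is $\frac{\tau^b-1}{\tau-1}y$. The goal is to pin down the values $c(\tgamma^a,\tgamma)$, $c(\ttau^a,\ttau)$ and $c(\tgamma^a,\ttau^b)$ by choosing $f(\tgamma^a\ttau^b)$.

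A candidate normalization compares $c$ restricted to $\langle\tgamma\rangle$ with the $2$-cocycle $\beta_{\tgamma}$ of Lemma~\ref{x and y as 2-cocycles}, and likewise compares $c$ on $\langle\ttau\rangle$ with $\beta_{\ttau}$. The formulas in (3) and (4) are exactly $\beta_{\tgamma}(\tgamma^a,\tgamma)$ and $\beta_{\ttau}(\ttau^a,\ttau)$. The difference $c\vert_{\langle\tgamma\rangle}-\beta_{\tgamma}$ is a $2$-cocycle on $\langle\tgamma\rangle\simeq\Zp$; this uses (Tors). Since $\Zp$ has cohomological dimension $1$, this difference is $\partial f_\gamma$ for some $f_\gamma$ on $\langle\tgamma\rangle$, and adjusting by $f_\gamma$ on the $\tgamma^a$ gives (3). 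The same argument with $\ttau$ gives (4).

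For (5), define $f(\tgamma^a\ttau^b)$ for general $a,b$ recursively, via the cocycle relation $\partial f(\tgamma^a,\ttau^b)=f(\tgamma^a)-f(\tgamma^a\ttau^b)+\tgamma^af(\ttau^b)$. This lets us prescribe $c(\tgamma^a,\ttau^b)$ freely for $a,b\neq0$ without disturbing (3), (4), or the values on $H_\infty$. One checks that the resulting $f$ is continuous and that (2) is consistent with the new choice of $f(\ttau)$, $f(\tgamma)$.

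\textbf{Main obstacle.} The delicate step is making all these adjustments compatible. Each modification must not break (1) or (2), and the $f$ used for (3) and (4) must agree with the $v_\gamma,v_\tau$ chosen for (2). I would try first to absorb (2) into (3)--(4): check that $\beta_{\tgamma}$ already satisfies the compatibility forced by (2), so that $c-\beta$ on the relevant cosets is a genuine cocycle, which can then be trivialized in one step. Continuity of the resulting $f$ in $(a,b)$ also needs checking; it follows from continuity of $s^{(pr)}$ and of the section.
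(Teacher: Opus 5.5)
Your outline follows the paper's proof step for step. You start from Lemma~\ref{cocycle value is 0 if second coordinate in H}, and you get (2) by adding $\partial f$ with $f$ factoring through $G_\infty$ and $f(\tgamma)=v_\gamma$, $f(\ttau)=v_\tau$. You get (3) and (4) from $H^2(\langle\tgamma\rangle,V)=0=H^2(\langle\ttau\rangle,V)$ together with Lemma~\ref{x and y as 2-cocycles}, and you get (5) by solving the coboundary relation for $f(\tgamma^a\ttau^b)$.

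The gap is the step you call the main obstacle, which you leave open. Your sentence ``one checks that (2) is consistent with the new choice of $f(\tgamma)$, $f(\ttau)$'' is false for an arbitrary choice. Suppose $c'$ satisfies (1) and (2), and you add $\partial F$ with $F$ factoring through $G_\infty$ and $F(1)=0$. This changes $c'(h,\tgamma)$ by $\partial F(h,\tgamma)=F(h)-F(h\tgamma)+hF(\tgamma)=(h-1)F(\tgamma)$, and similarly at $\ttau$. So (2) survives if and only if $F(\tgamma),F(\ttau)\in V^{H_\infty}$. Here $F(\tgamma)=f_\gamma(\tgamma)$, where $f_\gamma$ is a cochain with $c'\vert_{\langle\tgamma\rangle\times\langle\tgamma\rangle}+\partial f_\gamma=\beta_{\tgamma}$. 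Vanishing of $H^2$ determines $f_\gamma$ only up to a continuous $1$-cocycle $\xi$ on $\langle\tgamma\rangle$, and $\xi(\tgamma)$ can be any element of $V$. Nothing forces $f_\gamma(\tgamma)$ to be $H_\infty$-invariant.

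The missing idea is to use exactly this freedom, as the paper does. Subtract from $f_\gamma$ the $1$-cocycle on $\langle\tgamma\rangle\simeq\Zp$ whose value at $\tgamma$ is $f_\gamma(\tgamma)$; evaluation at a topological generator is surjective from continuous $1$-cocycles onto $V$. This gives $f_\gamma(\tgamma)=0$, and you arrange $f_\tau(\ttau)=0$ the same way. Evaluating the trivialization at $(1,1)$ and using (1) gives $f_\gamma(1)=0=f_\tau(1)$. Also $c'(\tgamma^a,1)=0$ and $c'(1,\ttau^b)=c'(1,1)=0$. Hence the single formula $F(\tgamma^a\ttau^b)=c'(\tgamma^a,\ttau^b)+f_\gamma(\tgamma^a)+\tgamma^a f_\tau(\ttau^b)-s^{(pr)}\gamma^a\frac{\tau^b-1}{\tau-1}y+\tgamma^a s^{(pr)}\frac{\tau^b-1}{\tau-1}y$ restricts to $f_\gamma$ when $b=0$ and to $f_\tau$ when $a=0$. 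This gives continuity without a case split and $F(\tgamma)=F(\ttau)=0$, hence (1)--(5) for $c'+\partial F$.

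Your proposed remedy is not needed. You suggest checking that $\beta_{\tgamma}$ ``already satisfies the compatibility forced by (2)'' and trivializing $c-\beta$ on ``the relevant cosets''. As stated this is not well defined, since $\beta_{\tgamma}$ lives only on $\langle\tgamma\rangle$. Moreover, (2) constrains only the value at $\tgamma$ of the trivializing cochain, not $\beta_{\tgamma}$ itself.
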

    \begin{proof}
        Using Lemma~\ref{cocycle value is 0 if second coordinate in H}, we pick $c'' : \G_K \times \G_K \to V$ such that $(1)$ is satisfied. 
 Therefore there exist $v_1, v_2 \in V$ such that: \begin{itemize}
        \item $
            c''(\tgamma, \tgamma^{-1}h\tgamma) - c''(h, \tgamma) = (h - 1)s^{(pr)}x + (h - 1)v_1, $ \item $ c''(\ttau, \ttau^{-1}h\ttau) - c''(h, \ttau) = (h - 1)s^{(pr)}y + (h - 1)v_2.
        $
        \end{itemize}
        Let $f'' : \G_K  \to V$ be a continuous function factoring through $G_{\infty}$ and satisfying  $f''(\tgamma) = v_1$, $f''(\ttau) = v_2$, and $f''(1)=0$. Then $c' := c'' + \partial f''$ satisfies $(1), (2)$.

        Since $\langle \tgamma \rangle , \langle \ttau \rangle \subseteq \G_K$ are procyclic, we see that $H^2(\langle \tgamma \rangle, V) = 0 = H^2(\langle \ttau \rangle, V)$. Therefore using Lemma~\ref{x and y as 2-cocycles}, we get continuous maps $f'_{\tgamma}: \langle \tgamma \rangle \to V$ and $f'_{\ttau} : \langle \ttau \rangle \to V$ such that
        \begin{equation}\label{eq:1}
           c'\vert_{\langle{\tgamma \rangle} \times \langle \tgamma \rangle} = \beta_{\tgamma} - \partial f'_{\tgamma}, \quad c'\vert_{\langle \ttau \rangle \times \langle \ttau \rangle} = \beta_{\ttau} - \partial f'_{\ttau}.  
        \end{equation} Modifying $f'_{\tgamma}$ and $f'_{\ttau}$ by a $1$-cocycle, we may assume that  $f'_{\tgamma}(\tgamma) = 0 = f'_{\ttau}(\ttau)$.

        We define $f' : \G_K \to G_{\infty} \to V$ by
        \[
            f'(\tgamma^a\ttau^b) = c'(\tgamma^a, \ttau^b) + f'_{\tgamma}(\tgamma^a) + \tgamma^a f_{\ttau}'(\ttau^b) - s^{(pr)}\gamma^a \frac{\tau^b - 1}{\tau - 1}y + \tgamma^a s^{(pr)}\frac{\tau^b - 1}{\tau - 1}y.
        \]
        Using \eqref{eq:1}, we may check that $f'_{\ttau}(1)=0=f'_{\tgamma}(1).$ Therefore $f'(\tgamma^a)=f'_{\tgamma}(\tgamma^a)$ and $f'(\ttau^b)=f'_{\ttau}(\ttau^b)$.
        Defining $c = c' + \partial f'$, we can check that $c$ satisfies $(3), (4) \text{ and } (5)$. Next, we verify that $c$ also satisfies $(1) \text{ and } (2)$. Indeed, 
        \begin{enumerate}[(i)]
            \item $c(g, h) = c'(g, h) + f'(g) - f'(gh) + gf'(h) = 0 $ (since $f'$ factors through $G_\infty$),
            \item $c(h, \tgamma) = c'(h, \tgamma) + f'(h) - f'(h\tgamma) + hf'(\tgamma) = (1 - h)s^{(pr)}x$ and similarly $c(h, \ttau) = (1 - h)s^{(pr)}y$. \qedhere
        \end{enumerate}
    \end{proof}

    \begin{lemm}\label{Second coordinate only matters mod H}
        Let $c$ be a $2$-cocycle such that $c(g, h) = 0$ for any $g \in \G_K, h \in H_{\infty}$. Then, $c(g_1, g_2 h) = c(g_1, g_2)$ for any $g_1, g_2 \in \G_K$ and $h \in H_{\infty}$.
    \end{lemm}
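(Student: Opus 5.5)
This is a direct consequence of the $2$-cocycle identity, applied to the triple $(g_1, g_2, h)$.

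Recall that for a $2$-cocycle $c : \G_K \times \G_K \to V$ and any $a, b, d \in \G_K$, the cocycle condition reads
\[
    a\, c(b, d) - c(ab, d) + c(a, bd) - c(a, b) = 0.
\]
We take $a = g_1$, $b = g_2$ and $d = h$, with $h \in H_{\infty}$. This gives
\[
    g_1 c(g_2, h) - c(g_1 g_2, h) + c(g_1, g_2 h) - c(g_1, g_2) = 0.
\]

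By hypothesis $c(g, h) = 0$ whenever the second argument lies in $H_{\infty}$. Both $c(g_2, h)$ and $c(g_1g_2, h)$ have second argument $h \in H_{\infty}$, so both vanish; in particular the term $g_1 c(g_2, h)$ is $g_1$ applied to $0$. The identity therefore reduces to $c(g_1, g_2 h) = c(g_1, g_2)$, which is the claim.

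There is no genuine obstacle. The only point to be careful about is the ordering and sign convention of the cocycle identity: it must be the one used in the definition of $\partial$ in the paper, namely $\partial f(\sigma_1, \sigma_2) = f(\sigma_1) - f(\sigma_1 \sigma_2) + \sigma_1 f(\sigma_2)$, and its degree-two analogue. With that convention the group element acts on the first argument, so the term carrying the action is $g_1 c(g_2, h)$, which vanishes by hypothesis.
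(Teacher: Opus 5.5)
Your proof is correct: applying the $2$-cocycle identity to $(g_1, g_2, h)$ and using that both $c(g_2,h)$ and $c(g_1g_2,h)$ vanish gives the claim directly. The paper leaves this lemma as an exercise, and your one-line argument is the intended one, with the same sign convention the paper uses elsewhere.
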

    \begin{proof}
        This proof is left as an exercise to the reader.
    \end{proof}
In Lemma \ref{Existence of a c}, we have obtained values of $c(h,\tgamma)$ and $c(h,\ttau)$. The following lemma computes the values of $c(h,g)$ for an arbitrary $g \in \G_K$. 
    \begin{lemm}\label{lem:c(h,g)}
        Let $c$ be a $2$-cocycle obtained in Lemma~\ref{Existence of a c}. Then, for any $g = \tgamma^a\ttau^b h_0 \in \G_K$, we have
        \[
            c(h, g) = -(h - 1)s^{(pr)}\left(\frac{\gamma^a - 1}{\gamma - 1}x + \gamma^a\frac{\tau^b - 1}{\tau - 1}y\right).
        \]
    \end{lemm}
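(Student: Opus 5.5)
By Lemma~\ref{Second coordinate only matters mod H}, $c(h,\tgamma^a\ttau^b h_0)=c(h,\tgamma^a\ttau^b)$, so I may take $h_0=1$ and prove the formula for $g=\tgamma^a\ttau^b$. The main tool is the cocycle identity
\[
h\,c(g_1,g_2)-c(hg_1,g_2)+c(h,g_1g_2)-c(h,g_1)=0 ,
\]
which gives the recursion
\[
c(h,g_1g_2)=c(h,g_1)+c(hg_1,g_2)-h\,c(g_1,g_2).
\]
Writing $hg_1=g_1h'$ with $h'=g_1^{-1}hg_1\in H_\infty$, a second application of the identity to the triple $(g_1,h',g_2)$ gives
\[
c(g_1h',g_2)=c(g_1,h'g_2)+g_1c(h',g_2)-c(g_1,h') .
\]
Here $c(g_1,h')=0$ by Lemma~\ref{Existence of a c}(1). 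Also $h'g_2=g_2h''$ with $h''\in H_\infty$, so Lemma~\ref{Second coordinate only matters mod H} gives $c(g_1,h'g_2)=c(g_1,g_2)$. Combining,
\[
c(h,g_1g_2)=c(h,g_1)+(1-h)c(g_1,g_2)+g_1c(g_1^{-1}hg_1,g_2).
\]
This is a twisted additivity of $g\mapsto c(h,g)$. The proof then reduces to verifying the formula on generators and checking that the right-hand side satisfies the same recursion.

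Define $\Phi_h(g)$ to be the claimed right-hand side. It suffices to check three things.
\begin{enumerate}
\item $\Phi_h(\tgamma)=(1-h)s^{(pr)}x$ and $\Phi_h(\ttau)=(1-h)s^{(pr)}y$, matching Lemma~\ref{Existence of a c}(2).
\item $\Phi_h$ satisfies the recursion above for $g_1=\tgamma^a$, $g_2=\tgamma$, using Lemma~\ref{Existence of a c}(3) for $c(\tgamma^a,\tgamma)$. The same must hold for $g_1=\ttau^b$, $g_2=\ttau$ using (4), and for $g_1=\tgamma^a$, $g_2=\ttau^b$ using (5).
\item The formula for $\tgamma^a\ttau^b$ follows from the pure-power cases via the recursion with $g_1=\tgamma^a$, $g_2=\ttau^b$.
\end{enumerate}
For integer $a\ge0$ the first two checks give the result by induction on $a$, and similarly for $b$. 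I then extend to $a,b\in\Zp$ by continuity of $c$ and of the operators $\frac{\gamma^a-1}{\gamma-1}$, since both sides are continuous in $a$ and $b$.

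The key computation in the recursion check is the following. Set $w=\frac{\gamma^a-1}{\gamma-1}x$ and $h'=\tgamma^{-a}h\tgamma^a$. Then
\[
\tgamma^a\Phi_{h'}(\tgamma)=\tgamma^a(1-h')s^{(pr)}x=(1-h)\tgamma^a s^{(pr)}x,
\]
and
\[
(1-h)c(\tgamma^a,\tgamma)=(1-h)\bigl(s^{(pr)}\gamma^a x-\tgamma^a s^{(pr)}x\bigr).
\]
Adding these to $\Phi_h(\tgamma^a)=-(h-1)s^{(pr)}w$ gives
\[
-(h-1)s^{(pr)}\bigl(w+\gamma^a x\bigr)=-(h-1)s^{(pr)}\frac{\gamma^{a+1}-1}{\gamma-1}x ,
\]
which is the desired formula for $\tgamma^{a+1}$. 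The $\tau$ case is identical. For the mixed case, use (5): the term $(1-h)\tgamma^a s^{(pr)}(\cdots)$ coming from $\tgamma^a\Phi_{h'}(\ttau^b)$ cancels the matching term in $(1-h)c(\tgamma^a,\ttau^b)$. What remains is $(1-h)s^{(pr)}\gamma^a\frac{\tau^b-1}{\tau-1}y$, the second summand of $\Phi_h$.

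I expect the main obstacle to be justifying the passage from integer to $p$-adic exponents. This needs continuity of $c$ together with the fact that the formulas in Lemma~\ref{Existence of a c}(3)--(5) are stated for all $a\in\Zp$, which I will assume. A second delicate point is the reduction $c(g_1,h'g_2)=c(g_1,g_2)$ via normality of $H_\infty$. The last is that $\Phi_{h'}$ is used with a conjugated $h'$, which is harmless because the formula is uniform in $h\in H_\infty$.
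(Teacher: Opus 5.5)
Your proposal is correct and follows essentially the same route as the paper. The paper's chain of equalities is exactly your twisted-additivity identity $c(h,g_1g_2)=c(h,g_1)+(1-h)c(g_1,g_2)+g_1c(g_1^{-1}hg_1,g_2)$, derived the same way and applied to $(\ttau^k,\ttau)$, $(\tgamma^k,\tgamma)$ and $(\tgamma^a,\ttau^b)$. The paper telescopes over integer exponents (your induction) and then uses Lemma~\ref{Second coordinate only matters mod H} to absorb $h_0$. Your explicit continuity argument for passing from integer to $p$-adic exponents fills in a step the paper leaves implicit.
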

    \begin{proof}
        For an integer $k \geq 0$, we have
        \begin{eqnarray*}
            (h - 1)\ttau^k s^{(pr)}y & = & \ttau^{k}(\ttau^{-k} h \ttau^{k} - 1)s^{(pr)}y \\
            & = & -\ttau^k c(\ttau^{-k}h \ttau^{k}, \ttau) \\
            & = & c(\ttau^k, \ttau^{-k}h\ttau^{k + 1}) - c(h\ttau^k, \ttau) - c(\ttau^k, \ttau^{-k}h \ttau^k) \\
            & = & c(\ttau^k, \ttau) - c(h\ttau^k, \ttau).
        \end{eqnarray*}
       The second and the fourth equalities follow from Lemma \ref{Existence of a c} and Lemma \ref{Second coordinate only matters mod H},  while the third equality is the cocycle condition.  Using the $2$-cocycle condition on $c(h \ttau^k, \ttau)$, we see that
        \[
            (h - 1)\ttau^k s^{(pr)}y = (1 - h)c(\ttau^k, \ttau) + c(h, \ttau^k) - c(h, \ttau^{k + 1}).
        \]
        Using Lemma~\ref{Existence of a c}(4), we see that
        \[
            (h - 1)s^{(pr)}\tau^k y = c(h, \ttau^k) - c(h, \ttau^{k + 1}).
        \]
        Given any integer $b \geq 1$, we may sum the display above for $k = 0, \cdots, b - 1$ to get
        \begin{eqnarray}\label{tau part of c(h, _)}
            -c(h, \ttau^b) = (h - 1)s^{(pr)}\frac{\tau^b - 1}{\tau - 1}y.
        \end{eqnarray}
        We can similarly prove that
        \begin{eqnarray}\label{gamma part of c(h, _)}
            -c(h, \tgamma^a) = (h - 1)s^{(pr)}\frac{\gamma^a - 1}{\gamma - 1}x.
        \end{eqnarray}

        Now,
        \begin{eqnarray*}
            (h - 1)\tgamma^a s^{(pr)}\frac{\tau^b - 1}{\tau - 1}y & = & \tgamma^a (\tgamma^{-a}h\tgamma^a - 1)s^{(pr)}\frac{\tau^b - 1}{\tau - 1}y \\
            & = & -\tgamma^a c(\tgamma^{-a}h\tgamma^{a}, \ttau^b) \\
            & = & c(\tgamma^a, \tgamma^{-a} h \tgamma^a \ttau^b) - c(h\tgamma^a, \ttau^b) - c(\tgamma^a, \tgamma^{-a}h \tgamma^a) \\
            & = &   (1 - h)c(\tgamma^a, \ttau^b) + c(h, \tgamma^a) - c(h, \tgamma^a\ttau^b).
        \end{eqnarray*}
      Here, the second equality follows from \eqref{tau part of c(h, _)} and the fourth equality from Lemmas \ref{Existence of a c}, \ref{Second coordinate only matters mod H} and the cocycle condition applied to $c(h\tgamma^a,\ttau^b)$.
      Using Lemma~\ref{Existence of a c}(5) and \eqref{gamma part of c(h, _)}, we see that
        \[
            -c(h, \tgamma^a\ttau^b) = (h - 1)s^{(pr)}\left(\frac{\gamma^a - 1}{\gamma - 1}x + \gamma^a\frac{\tau^b - 1}{\tau - 1}y\right).
        \]

        This lemma is now proved by applying Lemma~\ref{Second coordinate only matters mod H} to the display above.
    \end{proof}

    \begin{prop}\label{H2 fixed under Hinfty}
        Let $\alpha \in H^2(\G_K, V)$. Choose $x, y \in \Dt^{\dagger, pr}_L(V)$ congruent to $\eta^{(pr)}\Big(\rho(\alpha)(\gamma)\Big), \eta^{(pr)}\Big(\rho(\alpha)(\tau)\Big)$, respectively, modulo $(\varphi - 1)\Dt^{\dagger, r}_L(V)$. Then, for $c : \G_K \times \G_K \to V$ obtained using Lemma~\ref{Existence of a c}, the element $z := c(\tgamma, \tgamma^{-1}\ttau\tgamma) - c(\ttau, \tgamma) - (\ttau - 1)s^{(pr)}x + \tgamma s^{(pr)}\left(\frac{\tau^{\chi(\gamma)^{-1}} - 1}{\tau - 1}y\right) - s^{(pr)}y \in  (S \hat{\otimes}_{\Qp} \Bt^{\dagger, r}) \otimes_S V$ belongs to $\Dt^{\dagger, r}_L(V)$, i.e., it is $H_{\infty}$-fixed.
        Furthermore, the map
        \begin{eqnarray*}
            h_2 : H^2(\G_K, V) & \to & \frac{\ker d_2}{\im d_1} \\
            \alpha & \mapsto & \Big(-x, -y, -z\Big) \mod \im d_1
        \end{eqnarray*}
        is independent of the choices of $x$, $y$ and $c$, and is an $S$-linear homomorphism.
    \end{prop}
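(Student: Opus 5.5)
The proof splits into three parts: showing $z$ is $H_\infty$-fixed, showing $(-x,-y,-z)\in\ker d_2$, and showing independence of choices. Throughout, the main tools are the explicit cocycle values of Lemmas~\ref{Existence of a c}, \ref{Second coordinate only matters mod H} and \ref{lem:c(h,g)}, together with the fact that the $\G_K$-action on $(S\hat{\otimes}\Bt^{\dagger,r})\otimes_S V$ restricted to $\D^{(r,pr)}$ factors through $G_\infty$.

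\emph{Step 1: $z$ is $H_\infty$-fixed.} For $h\in H_\infty$ I would compute $(h-1)z$ term by term. The two cocycle values are handled with the cocycle identity
\[
hc(g_1,g_2)=c(hg_1,g_2)-c(h,g_1g_2)+c(h,g_1)
\]
applied to $(g_1,g_2)=(\tgamma,\tgamma^{-1}\ttau\tgamma)$ and $(\ttau,\tgamma)$. In $c(h\tgamma,\cdot)$ and $c(h\ttau,\cdot)$ one rewrites $h\tgamma=\tgamma h'$ and $h\ttau=\ttau h''$, then uses the cocycle identity again together with $c(\cdot,H_\infty)=0$ and Lemma~\ref{Second coordinate only matters mod H}. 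This reduces everything to values $c(h',g)$ with $h'\in H_\infty$, which are given explicitly by Lemma~\ref{lem:c(h,g)}. We need $\tgamma^{-1}\ttau\tgamma\equiv\ttau^{\chi(\gamma)^{-1}}$ modulo $H_\infty$, and $\ttau\tgamma\equiv\tgamma\ttau^{\chi(\gamma)^{-1}}$. The resulting terms have the shape $(h-1)s^{(pr)}(\cdots)$, and they should cancel exactly against $(h-1)$ applied to the three correction terms $-(\ttau-1)s^{(pr)}x+\tgamma s^{(pr)}(\cdots y)-s^{(pr)}y$. This is the main obstacle: a bookkeeping computation in which conjugating $h$ past $\tgamma$ and $\ttau$ must be tracked carefully. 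I would first do it for integer exponents and pass to limits by continuity, as in Lemma~\ref{Triple is in ker d1}.

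\emph{Step 2: $(-x,-y,-z)\in\ker d_2$.} We need $(\tau-1)x+(1-\delta^{-1}\gamma)y+(\varphi-1)z=0$. Since $c$ is $V$-valued and $\varphi$ commutes with $\G_K$, we have $(\varphi-1)c=0$ and $(\varphi-1)s^{(pr)}=\id$. Hence
\[
(\varphi-1)z=-(\tau-1)x+\gamma\tfrac{\tau^{\chi(\gamma)^{-1}}-1}{\tau-1}y-y .
\]
By the monodromy relation, $\gamma\frac{\tau^{\chi(\gamma)^{-1}}-1}{\tau-1}=\delta^{-1}\gamma$, which is exactly the identity used in the proof of Proposition~\ref{Explicit Ginfty cohomology}. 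So the sum vanishes.

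\emph{Step 3: independence of choices.} Replacing $x$ by $x+(\varphi-1)u$ and $y$ by $y+(\varphi-1)w$ with $u,w\in\D^r$ forces a corresponding change of $c$ by $\partial f$. Here $f$ is built from $s^{(pr)}(\varphi-1)u-u\in V$, and similarly for $w$ (note $s^{(pr)}(\varphi-1)u-u$ lies in $\ker(\varphi-1)=V$). I would check that the new triple differs from the old one by $d_1(0,u,w)$, up to signs. Changing $c$ within the class while keeping properties (1)--(5) of Lemma~\ref{Existence of a c} amounts to $c\mapsto c+\partial f$ with $f$ factoring through $G_\infty$, i.e. $f(g)\in V^{H_\infty}$ by properties (1)--(2). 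Then $z$ changes by $f(\tgamma)$- and $f(\ttau)$-terms, and the formula of Proposition~\ref{Explicit Ginfty cohomology}(3) shows this change is $(\tau-1)a+(1-\delta^{-1}\gamma)b$ with $a,b\in(\D^r)^{\varphi=1}$. That lies in $d_1(0,a,b)$ with $x,y$ unchanged, since $(1-\varphi)a=(1-\varphi)b=0$. If $x,y$ are altered by elements coming from $\im f$ of $\D^{pr}/(\varphi-1)\D^r$, the same reasoning adds a $d_1(w',0,0)$-type term. Finally, $S$-linearity follows because every choice can be made additively and $S$-linearly: sums of admissible $(x,y,c)$ for $\alpha,\beta$ are admissible for $\alpha+\beta$, and $s^{(pr)}$ is $S$-linear.
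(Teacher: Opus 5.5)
\textbf{Where the proposal stands.} Your Steps 1 and 2 are sound. Step 1 is the paper's own computation; the paper organizes it by showing that each $z_{g_1,g_2}$ is $H_\infty$-fixed via Lemma~\ref{lem:c(h,g)} and then subtracting. Step 2 correctly supplies the check $(-x,-y,-z)\in\ker d_2$, which the paper leaves implicit. Your handling of a change of $x,y$ modulo $(\varphi-1)\D^r$ matches the explicit $f$ used in Section~\ref{sec:well-defined}.

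\textbf{The gap: independence of $c$ for fixed $x,y$.} It is not true that two cocycles $c_1$ and $c_2=c_1+\partial f$, both satisfying (1)--(5) of Lemma~\ref{Existence of a c}, differ by $\partial f$ with $f$ factoring through $G_\infty$ and taking values in $V^{H_\infty}$.

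What (1), (2) and Lemma~\ref{lem:c(h,g)} actually give, for $h\in H_\infty$, is $f(gh)=f(g)+gf(h)$ and $f(hg)=f(h)+hf(g)$. So $f|_{H_\infty}$ is a $1$-cocycle whose class $\beta$ is $G_\infty$-invariant. Moreover $\partial f$ descends to a $V^{H_\infty}$-valued $2$-cocycle on $G_\infty$ representing $\mathrm{tr}(\beta)$. You can make $f$ factor through $G_\infty$ exactly when this class vanishes, i.e.\ when $\beta\in\im(\mathrm{res})$.

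Conditions (3)--(5) do not force this. They pin down $\partial f$ only at the pairs $(\gamma^a,\gamma)$, $(\tau^a,\tau)$ and $(\gamma^a,\tau^b)$, whereas the class is read off from the value at $(\tau,\gamma)$. Conversely, starting from any $\beta\in H^1(H_\infty,V)^{G_\infty}$, you get an admissible $c_2$ by taking the transgression cochain and correcting it on $\langle\tgamma\rangle$ and $\langle\ttau\rangle$ as in the proof of Lemma~\ref{Existence of a c}.

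Hence the class of $z_2-z_1$ in $\coker g_{V^{H_\infty}}\simeq H^2(G_\infty,V^{H_\infty})$ can be any element of $\ker(\mathrm{inf})=\im(\mathrm{tr})$, and this group is nonzero in general. For example, take $K=\Qp(\mu_p)$, $\pi=1-\zeta_p$ and $V=\Qp(1)$. Then $H^2(G_\infty,\Qp(1))\simeq\Qp$ is spanned by $\log\chi_{\cycl}\cup c$, with $c$ Kummer's cocycle. This class inflates to zero in $H^2(\G_K,\Qp(1))$, since its invariant is $\pm\log\chi_{\cycl}(\mathrm{rec}_K(\pi))=0$, because $N_{K/\Qp}(\pi)=p$.

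So $z_2-z_1$ need not lie in $(\tau-1)(\D^r)^{\varphi=1}+(1-\delta^{-1}\gamma)(\D^r)^{\varphi=1}$, where $(\D^r)^{\varphi=1}=V^{H_\infty}$. Consequently $(0,0,z_2-z_1)$ is in general not of the form $d_1(0,a,b)$ with $a,b\in(\D^r)^{\varphi=1}$, and your argument does not show that it lies in $\im d_1$.

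\textbf{The missing ingredient} is the one the paper uses:
\begin{itemize}
\item Write $[\partial f]=\mathrm{tr}(\beta)$.
\item Invoke the commutativity of the transgression square in \eqref{eq:comparing}; this is proved in Theorem~\ref{Overconvergent theorem} without using $h_2$. It identifies the class of $z_2-z_1$ with $\delta_3$ of a representative $x'$ of $\pm\eta^{(pr)}(\beta)$.
\item Conclude with $\delta_4\circ\delta_3=0$.
\end{itemize}
Concretely, the difference of triples is $d_1(x',y_{x'},z_{x'})$, whose first coordinate $x'\in\D^{pr}$ is generally nonzero.
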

    \begin{proof}
       For $g_1 = \tgamma^{a_1}\ttau^{b_1}h_1, g_2 = \tgamma^{a_2}\ttau^{b_2}h_2 \in \G_K$, define the element $z_{g_1, g_2}$ as 
        \begin{eqnarray*}
            z_{g_1, g_2} & = & c(g_1, g_2) + s^{(pr)}\left(\frac{\gamma^{a_1} - 1}{\gamma - 1}x + \gamma^{a_1}\frac{\tau^{b_1} - 1}{\tau - 1}y\right) - s^{(pr)}\left(\frac{\gamma^{a_1 + a_2} - 1}{\gamma - 1} x + \gamma^{a_1 + a_2}\frac{\tau^{b_1 \chi(\gamma)^{-a_2}+b_2} - 1}{\tau - 1}y\right)  \\
            & & \qquad \qquad \qquad \qquad \qquad + g_1s^{(pr)}\left(\frac{\gamma^{a_2} - 1}{\gamma - 1}x + \gamma^{a_2}\frac{\tau^{b_2} - 1}{\tau - 1}y\right).
        \end{eqnarray*}
        We claim that $z_{g_1, g_2}$ is fixed under $H_{\infty}$. Indeed, using Lemma \ref{lem:c(h,g)}, we obtain
        \begin{eqnarray*}
            (h - 1)z_{g_1, g_2} & = & c(hg_1, g_2) + c(h, g_1) - c(h, g_1g_2) - c(g_1, g_2) - c(h, g_1) + c(h, g_1g_2) - g_1 c(g_1^{-1}hg_1, g_2) \\
            & = & c(hg_1, g_2) - c(g_1, g_2) - c(hg_1, g_2) - c(g_1, g_1^{-1}hg_1) + c(g_1, g_1^{-1}hg_1g_2).
        \end{eqnarray*}
        The first and the third terms cancel, the second term cancels with the fifth term using Lemma~\ref{Second coordinate only matters mod H} and the fourth term is $0$ by Lemma~\ref{Existence of a c}(1). Therefore
        \begin{eqnarray}\label{Differences are H-fixed}
            (h - 1)z_{g_1, g_2} = 0.
        \end{eqnarray}

       This computation shows that $z = z_{g_1, g_2} - z_{g_1', g_2'}$ for $(g_1, g_2) = (\tgamma, \tgamma^{-1}\ttau\tgamma)$ and $(g_1', g_2') = (\ttau, \tgamma)$ is fixed under $H_{\infty}$.

The fact that $h_2$ is independent of the choices of $x, y$ and $c$ will be proved in Section \ref{sec:well-defined}.
\end{proof}

    \begin{rema}\label{Remove Tors}
        In this remark, we show how to remove the assumption \textbf{(Tors)}. This makes the following theorem true for arbitrary $K$. 
        
        Assume that $K$ is arbitrary. So, $\Gamma_K \simeq \Delta \times \Zp$ for a finite prime-to-$p$ subgroup $\Delta$ of $\Gamma_K$. Let $\gamma$ be a topological generator of $\Gamma_K$. Choose an arbitrary lift $\tgamma \in H_{\tau, K}$ of $\gamma$. So the procyclic subgroup of $H_{\tau, K}$ generated by $\tgamma$ surjects onto $\Gamma_K$. Choose generators $d$ of $\Delta$ and $\gamma'$ of the Sylow-$p$-subgroup of $\Gamma_K$ such that $\gamma = d\gamma'$. Choosing lifts $\tilde{d}$ and $\tilde{\gamma'}$ of $d$ and $\gamma$, respectively in $\langle \tgamma\rangle$, we may define $\tilde{\sigma} = \tilde{d}^{a_1}\tilde{\gamma'}^{a_2}$ for any $\sigma = d^{a_1}\gamma'^{a_2} \in \Gamma_K$. Note that the lift $\tilde{\gamma'}$ can be chosen to be a multiplicative lift.

        If, in the lemmas above, one replaces all $\tgamma^a$ with $\tilde{d}^{a_1}\tilde{\gamma'}^{a_2}$, where $a_1 \in \{0, 1, \ldots, |\Delta| - 1\}$ and $a_2 \in \Zp$, then the proofs go through mutatis mutandis.
    \end{rema}

    \begin{theo}\label{Overconvergent theorem}
        Let $V$ be a family of representations of $\G_K$ and $\D^{\dagger, r}_{\tau, K}(V)$ the associated $(\varphi, \tau)$-module. For $r \geq r_0$, let $\D^{r}_L = S\hat{\otimes}\Bt^{\dagger, r}_L\otimes_{S \hat{\otimes}\B_{\tau, K}^{\dagger, r}} D_{\tau, K}^{\dagger, r}(V)$. Then, the cohomology of the complex
        {\small
        \[
            0 \to \D^{r}_L \xrightarrow{d_0 = \begin{pmatrix}\varphi - 1 \\ \gamma - 1 \\ \tau - 1\end{pmatrix}} \D^{pr}_L \oplus \D^{r}_L \oplus \D^{r}_L \xrightarrow{d_1 = \begin{pmatrix}\gamma - 1 & 1 - \varphi & 0 \\ \tau - 1 & 0 & 1 - \varphi \\ 0 & \tau - 1 & 1 - \delta^{-1}\gamma\end{pmatrix}} \D^{pr}_L \oplus \D^{pr}_L \oplus \D^{r}_L \xrightarrow{d_2 = (\tau - 1, 1 - \delta^{-1}\gamma, \varphi - 1)} \D^{pr}_L \to 0
        \]
        }
        is isomorphic to the Galois cohomology of $V$, where $\delta = \frac{\tau^{\chi(\gamma)} - 1}{\tau - 1}$.
    \end{theo}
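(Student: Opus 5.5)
We treat each degree separately, as outlined in the introduction. In degree $0$, $H^0(\G_K, V) = (V^{H_\infty})^{G_\infty}$. By Proposition~\ref{Hinfty cohomology}(1), $V^{H_\infty} \simeq (\D^r_L)^{\varphi = 1}$. Since $\gamma,\tau$ topologically generate $G_\infty$, taking $G_\infty$-invariants gives exactly $\ker d_0 = \{x \in \D^r_L : \varphi x = \gamma x = \tau x = x\}$. The last degree and all higher degrees vanish on both sides. $H^3(\G_K,V)=0$ because $\G_K$ has $p$-cohomological dimension $2$ and the coefficients are rational. The $(\varphi,\tau)$ side vanishes because the degree-$3$ cohomology is the cokernel of $\Dt$-cohomology $\D^{pr}/(\varphi-1)\D^r$ under $g$. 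More precisely, $\coker d_2$ is a quotient of $\coker g_{\D^{pr}/(\varphi-1)\D^r} \simeq H^2(G_\infty, H^1(H_\infty,V))$ via Proposition~\ref{Explicit Ginfty cohomology}, and this group embeds into $H^3(\G_K,V)=0$ by Hochschild--Serre, since $H^2(H_\infty,V)=0$ and $G_\infty$ has cohomological dimension $2$. So the real content is degrees $1$ and $2$.

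For degrees $1$ and $2$ we build a map of exact sequences. The top row is \eqref{DHW exact sequence} and the bottom row is the sequence of Proposition~\ref{Substitute for 6 term HS}. The vertical maps are as follows:
\begin{enumerate}
\item the $H^1(G_\infty,V^{H_\infty})$ term maps via Proposition~\ref{Explicit Ginfty cohomology}(2) applied to $M = V^{H_\infty} \simeq (\D^r)^{\varphi=1}$;
\item $h_1$ from Proposition~\ref{Definition of h1};
\item $H^1(H_\infty,V)^{G_\infty}$ maps via $-\eta^{(pr)}$, which is $G_\infty$-equivariant, so invariants go to $(\D^{pr}/(\varphi-1)\D^r)^{\tau=1,\gamma=1}$;
\item $H^2(G_\infty,V^{H_\infty})$ maps via Proposition~\ref{Explicit Ginfty cohomology}(3), with a sign;
\item $h_2$ from Proposition~\ref{H2 fixed under Hinfty};
\item $H^1(G_\infty,H^1(H_\infty,V))$ maps via Proposition~\ref{Explicit Ginfty cohomology}(2) applied to $M=H^1(H_\infty,V)\simeq \D^{pr}/(\varphi-1)\D^r$, composed with $-\eta^{(pr)}$.
\end{enumerate}
Items (1), (3), (4) and (6) are isomorphisms by Propositions~\ref{Explicit Ginfty cohomology} and \ref{Hinfty cohomology}. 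The signs must be chosen consistently so that every square commutes. Once that holds, the five lemma applied to the first five terms, and then the four-term version at $h_2$ using surjectivity of the last maps, shows that $h_1$ and $h_2$ are isomorphisms.

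The squares are checked in order. The square $\inf$ vs $\delta_1$ commutes directly: a cocycle inflated from $G_\infty$ is trivial on $H_\infty$, so we may take $x=0$. The square $\mathrm{res}$ vs $\delta_2$ commutes by the definition of $h_1$. The square $\rho$ vs $\delta_5$ commutes by the choice of $x,y$ in Lemma~\ref{Existence of a c}. The square $\inf$ vs $\delta_4$ needs the following argument. A class from $H^2(G_\infty,V^{H_\infty})$ has $\rho=0$, so we may take $x=y=0$. The inflated cocycle can be modified as in Lemma~\ref{Existence of a c}, and then $z = c(\tgamma,\tgamma^{-1}\ttau\tgamma)-c(\ttau,\tgamma)$, matching Proposition~\ref{Explicit Ginfty cohomology}(3). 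One has to check that the modifications in Lemma~\ref{Existence of a c} change $z$ only by an element of $\im g$, or, equivalently, that they are coboundaries of functions factoring through $G_\infty$ with values in $V^{H_\infty}$.

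The main obstacle is the transgression square, $\mathrm{tr}$ vs $\delta_3$. Start from $x\in\D^{pr}$ representing a $G_\infty$-invariant class, with cocycle $\sigma\mapsto(\sigma-1)s^{(pr)}x$ on $H_\infty$. We extend it to the explicit cochain $y(g)=(g-1)s^{(pr)}x - w(g)$ on $\G_K$, where $w$ is chosen on the section $\tgamma^a\ttau^b$ so that $\partial y$ is $H_\infty$-fixed. The natural choice is $w(\tgamma^a\ttau^b) = s^{(pr)}$ of the element $\frac{\gamma^a-1}{\gamma-1}(\varphi-1)y_x+\gamma^a\frac{\tau^b-1}{\tau-1}(\varphi-1)z_x$, corrected by the $\D^r$-elements $y_x,z_x$. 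We then compute $\partial y(\tgamma,\tgamma^{-1}\ttau\tgamma)-\partial y(\ttau,\tgamma)$ and compare it with $(\tau-1)y_x+(1-\delta^{-1}\gamma)z_x$ modulo $\im g$. This computation resembles Lemma~\ref{lem:c(h,g)}, with the same telescoping over powers of $\ttau$ and a limiting argument for $\ttau^{\chi(\gamma)}$ as in Lemma~\ref{Triple is in ker d1}. Finally, we must verify the deferred well-definedness of $h_2$: independence of $x,y,c$ modulo $\im d_1$. This comes from changing $x,y$ by $(\varphi-1)\D^r$ and changing $c$ by coboundaries preserving the normalizations of Lemma~\ref{Existence of a c}, and in each case exhibiting the difference as $d_1$ of an explicit triple, as was done for $h_1$.
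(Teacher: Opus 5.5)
Your proposal is correct and is essentially the paper's proof. It uses the same map from the six-term Hochschild--Serre sequence to the sequence of Proposition~\ref{Substitute for 6 term HS}, with the same vertical maps and signs, and concludes with the five lemma. It also uses the same extension of the $H_\infty$-cocycle for the transgression square, provided the correction $w(\gamma^a\tau^b)$ is exactly the $H_\infty$-fixed element $\frac{\gamma^a-1}{\gamma-1}y_x+\gamma^a\frac{\tau^b-1}{\tau-1}z_x$ and not $s^{(pr)}$ of anything, since $s^{(pr)}$ is not $H_\infty$-equivariant.

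There are two minor differences. First, the paper only identifies $H^3(\G_K,V)\simeq H^2(G_\infty,H^1(H_\infty,V))\simeq \D^{pr}_L/\im d_2$ rather than proving vanishing. Second, the paper proves that $h_2$ does not depend on $c$ by noting that $\partial f$ gives a class in $H^2(G_\infty,V^{H_\infty})$ killed by inflation, hence a transgression, and then using the commutative third square; you instead propose writing down an explicit $d_1$-preimage, and both routes work.
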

    \begin{proof}
        The Hochschild-Serre spectral sequence gives the following exact sequence
        \begin{eqnarray}\label{Plain 6 term HS}
            && 0 \to H^1(G_{\infty}, V^{H_{\infty}}) \to H^1(\G_K, V) \to H^1(H_{\infty}, V)^{G_{\infty}} \to  \\
            && \qquad \qquad H^2(G_{\infty}, V^{H_{\infty}}) \to H^2(\G_K, V) \to H^1(G_\infty, H^1(H_{\infty}, V)) \to 0. \nonumber
        \end{eqnarray}
        Next, we produce a map of complexes between sequence~\eqref{Plain 6 term HS} and the sequence in Proposition~\eqref{Substitute for 6 term HS}:
        \begin{equation}\label{eq:comparing}
            \begin{tikzcd}[column sep = small]
                H^1(G_{\infty}, V^{H_{\infty}}) \ar[r, "\inf", hookrightarrow]\ar[d, "{(\mathrm{ev}_{\gamma}, \mathrm{ev}_{\tau})}"] & H^1(\G_K, V) \ar[r, "\mathrm{res}"]\ar[d, "h_1"] & H^1(H_{\infty}, V)^{G_{\infty}} \ar[r, "\mathrm{tr}"] \ar[d, "-\eta^{(pr)}"] & H^2(G_{\infty}, V^{H_{\infty}}) \ar[r, "\inf"] \ar[d, "{\mathrm{ev}_{\left(\tau, \gamma\right)}} - \mathrm{ev}_{\left(\gamma, \gamma^{-1}\tau\gamma\right)}"] & H^2(\G_K, V) \ar[r, "\rho", twoheadrightarrow] \ar[d, "h_2"] & H^1(G_\infty, H^1(H_{\infty}, V)) \ar[d, "{(-\mathrm{ev}_{\gamma}, -\mathrm{ev}_{\tau})}"] \\
                \dfrac{\ker g_{(\D_L^{r})^{\varphi = 1}}}{\im f_{(\D_L^{r})^{\varphi = 1}}} \ar[r, "\delta_1", hookrightarrow] & \dfrac{\ker d_1}{\im d_0} \ar[r, "\delta_2"] & \left(\frac{\D_L^{pr}}{(\varphi - 1)}\right)^{\tau = 1, \gamma = 1} \ar[r, "\delta_3"] & \frac{(\D_L^{r})^{\varphi = 1}}{(\tau - 1, 1 - \delta^{-1}\gamma)} \ar[r, "\delta_4"] & \dfrac{\ker d_2}{\im d_1} \ar[r, "\delta_5", twoheadrightarrow] & \dfrac{\ker g_{(\D_L^{pr}/(\varphi - 1))}}{\im f_{(\D_L^{pr}/(\varphi - 1))}},
            \end{tikzcd}
       \end{equation}
        where $\mathrm{ev}_{*}$ means evaluating a function at $*$.

        Next, we show that the diagram above commutes.
        \begin{enumerate}
            \item Let $\alpha \in H^1(G_{\infty}, V^{H_{\infty}})$. Choose a $1$-cocycle $c : G_{\infty} \to V^{H_{\infty}}$ representing $\alpha$. Since $\mathrm{res}\circ\inf(\alpha) = 0$ in $H^1(H_{\infty}, V)$, we may choose $x$ in Proposition~\ref{Definition of h1} to be $0$. Then,
            \[
                h_1\circ\inf(\alpha) = (0, c(\gamma), c(\tau)).
            \]
            This is precisely equal to $\delta_1 \circ (\mathrm{ev}_\gamma, \mathrm{ev}_{\tau}) (\alpha)$.

            \item Let $\alpha \in H^1(\G_K, V)$. Then,
            \[
                \delta_2\circ h_1(\alpha) = -x,
            \]
            where $x \in \D^{pr}_L$ is congruent to $\eta^{(pr)}(\mathrm{res}(\alpha))$ modulo $(\varphi - 1)\D^{r}_L$. This is obviously equal to $-\eta^{(pr)}\circ\mathrm{res}(\alpha)$.

            \item Let $\alpha \in H^1(H_{\infty}, V)^{G_{\infty}}$. Let $c : H_{\infty} \to V$ be a $1$-cocycle representing $\alpha$. Fix $x \in \D_L^{pr}$ such that $\eta^{(pr)}(\alpha) = x$ modulo $(\varphi - 1)\D_L^{r}$. We may assume that $c$ is given by $c(h) = (h - 1)s^{(pr)}x$ for all $h \in H_{\infty}$.
            
            As $\alpha$ is fixed by $G_{\infty}$, there are $v, w \in V$ such that
            \[
                \tgamma c(\tgamma^{-1}h\tgamma) = c(h) + (h - 1)v \quad \text{and} \quad \ttau c(\ttau^{-1}h\ttau) = c(h) + (h - 1)w
            \]
            for all $h \in H_{\infty}$. These imply that, with 
            $y=\tgamma s^{(pr)}x-s^{(pr)}x-v \in \D_L^{r}$ and $z =\ttau s^{(pr)}x-s^{(pr)}x-w \in \D_L^{r}$, we have
            \begin{eqnarray}\label{Prep to define delta_3}
                (\tgamma - 1)s^{(pr)}x = y + v \quad \text{ and } \quad (\ttau - 1)s^{(pr)}x = z + w.
            \end{eqnarray}
            In particular, after applying $\varphi - 1$, we get
            \[
                (\gamma - 1)x = (\varphi - 1)y \quad \text{ and } \quad (\tau - 1)x = (\varphi - 1)z.
            \]

           \noindent \textbf{Claim:} For arbitrary integers $a, b \geq 0$, the element
        
    \begin{eqnarray}\label{Extension to G_K}
                (\tgamma^a \ttau^b - 1)s^{(pr)}x - \frac{\gamma^a - 1}{\gamma - 1}y - \gamma^a \frac{\tau^b - 1}{\tau - 1}z \in V.
            \end{eqnarray}
           \noindent \textit{Proof of Claim:} Consider $(\ttau - 1)s^{(pr)}x = z + w$. Applying $\ttau^{m - 1}$ and iteratively substituting, we see that
            \begin{eqnarray}\label{Extension to powers of tau}
                (\ttau^b - 1)s^{(pr)}x = (1 + \tau + \cdots + \tau^{b - 1})z + w' = \frac{\tau^b - 1}{\tau - 1}z + w'
            \end{eqnarray}
            for some $w' \in V$.  Similarly, we see that
            \begin{eqnarray}\label{Extension to powers of gamma}
                (\tgamma^a - 1)s^{(pr)}x = \frac{\gamma^a - 1}{\gamma - 1}y + v'
            \end{eqnarray}
            for some $v' \in V$. Now we apply $\tgamma^a$ to \eqref{Extension to powers of tau} and substitute \eqref{Extension to powers of gamma} to get
            \[
                (\tgamma^a\ttau^b - 1)s^{(pr)}x = \frac{\gamma^a - 1}{\gamma - 1}y + \gamma^a\frac{\tau^b - 1}{\tau - 1}z + v''
            \]
            for some $v'' \in V$. This proves the claim.

            A quick check using continuity shows that formula \eqref{Extension to G_K} is true for arbitrary $a, b \in \Zp$. This means that we have extended $c$ to a continuous $1$-cochain $c' : \G_K \to V$ given by 
            $$c'(g):=(g-1)s^{(pr)}x - \frac{\gamma^a - 1}{\gamma - 1}y - \gamma^a \frac{\tau^b - 1}{\tau - 1}z $$
            where $g\mapsto \gamma^a\tau^b$ under the canonical surjection $\G_K \rightarrow G_\infty$. We can easily check the following:  
            \begin{enumerate}
                \item $c'\vert_{H_\infty}=c$,
                \item $c'(st)=c'(s)+sc'(t)$ for all $s \in \G_K$ and $t \in H_\infty$,
                 \item $c'(ts)=c'(t)+tc'(s)$ for all $s \in \G_K$ and $t \in H_\infty.$
            \end{enumerate}
            A formal check done, e.g., in \cite[Proposition 1.6.6]{Neu}, shows that $\partial c'$ satisfies the three bullet points given in Section \ref{Explicit maps in inflation-restriction}.
            The transgression map $\mathrm{tr}$ applied to $\alpha$ yields the class $[\partial c']$. Finally, we check that $$(\mathrm{ev}_{(\gamma, \gamma^{-1}\tau\gamma)}-\mathrm{ev}_{(\tau, \gamma)})\circ \mathrm{tr} = \delta_3 \circ \eta^{(pr)}.$$ Hence we are reduced to show that 
            \[
                \partial c'(\gamma, \gamma^{-1}\tau\gamma) - \partial c'(\tau, \gamma) = (\tau - 1)y + (1 - \delta^{-1}\gamma)z.
            \]
            Indeed, the left hand side 
            \begin{eqnarray*}
                & = & c'(\tgamma) - c'(\tgamma\tilde{\tau^{\chi(\gamma)^{-1}}}) + \tgamma c'(\tilde{\tau^{\chi(\gamma)^{-1}}}) - c'(\ttau) + c'(\ttau\tgamma) - \ttau c'(\tgamma) \\
                & = & (1 - \ttau)[(\tgamma - 1)s^{(pr)}x - y] + \tgamma\Big[(\tilde{\tau^{\chi(\gamma)^{-1}}} - 1)s^{(pr)}x - \frac{\tau^{\chi(\gamma)^{-1}} - 1}{\tau - 1}z\Big] - [(\ttau - 1)s^{(pr)}x - z] \\
                & & \quad - \tilde{\tau\gamma}(h_1 - 1)s^{(pr)}x + \tilde{\tau\gamma}(h_2 - 1)s^{(pr)}x,
            \end{eqnarray*}
            where $h_1, h_2 \in H_{\infty}$ are defined by $\tgamma\tilde{\tau^{\chi(\gamma)^{-1}}} = \tilde{\tau\gamma}h_1$ and $\ttau\tgamma = \tilde{\tau\gamma}h_2$. After some cancellations, we get that the expression above is $ (\tau - 1)y + (1 - \delta^{-1}\gamma)z$ which completes the proof of the commutativity of the third square.

            \item Let $\alpha \in H^2(G_{\infty}, V^{H_{\infty}})$. Pick a normalized cocycle $c' : G_{\infty}\times G_{\infty} \to V^{H_{\infty}}$ representing $\alpha$. Pre-composing $c'$ with  the canonical surjection $\G_K \to G_{\infty}$ represents the class $\inf \alpha$. We may modify $c'$ by a coboundary so that the new cocycle $c$ satisfies all conditions in Lemma~\ref{Existence of a c} for $x = 0 = y$. Indeed, we construct $f' : \G_K \to V^{H_{\infty}}$ as is done in the proof of Lemma~\ref{Existence of a c} (the fact that $f'(\G_K) \subseteq V^{H_{\infty}}$ can be seen by evaluating \eqref{eq:1} at $(\tgamma^a,\tgamma)$ and $(\ttau^b,\ttau)$ and applying induction). Thinking of $f'$ as a continuous function from $G_{\infty}$ to $V^{H_{\infty}}$, we get a new $2$-cocycle $c = c' + \partial f'$ representing $\alpha$ such that pre-composing $c$ with $\G_K \to G_{\infty}$ satisfies all conditions in Lemma~\ref{Existence of a c} for $x = 0 = y$. Then,
            \[
                h_2 \circ \inf(\alpha) = (0, 0, c(\ttau, \tgamma) - c(\tgamma, \tgamma^{-1}\ttau\tgamma)) = \delta_4 \circ (\mathrm{ev_{(\tau, \gamma)}} - \mathrm{ev}_{(\gamma, \gamma^{-1}\tau\gamma)})(\alpha).
            \]

            \item The commutativity of the fifth square is similar to that of the second square.
        \end{enumerate}
        Applying the five lemma twice to diagram \ref{eq:comparing} and noting that all the vertical arrows, except possibly the maps $h_1$ and $h_2$, are isomorphisms by Propositions \ref{Explicit Ginfty cohomology} and \ref{Hinfty cohomology}, we see that the maps $h_1$ and $h_2$ are indeed isomorphisms.
        
\vspace{.2cm}

Since $H^i(H_\infty,V)=0$ for $i \geq 2$ by Proposition \ref{Hinfty cohomology}, we obtain $H^3(\G_K,V) \simeq H^2(G_\infty, H^1(H_\infty,V))$ by Hochschild-Serre.
This module is further isomorphic to 
$ \frac{H^1(H_\infty,V)}{\im g_{H^1(H_\infty,V)}}$
by Proposition \ref{Explicit Ginfty cohomology} which equals
$\frac{\D_L^{pr}}{\im d_2}$ by Proposition \ref{Hinfty cohomology}.
    \end{proof}

Taking direct limits in Theorem \ref{Overconvergent theorem}, we get the following corollary.

\begin{coro}
    Let $V$ be a family of representations of $\G_K$ and $\D_L := \Dt^{\dagger}_{L}(V)$ as in Definition~\ref{Definition of families of phi tau modules and their base change}. Then, the cohomology of the complex
        {\small
        \[
            0 \to \D_L \xrightarrow{d_0 = \begin{pmatrix}\varphi - 1 \\ \gamma - 1 \\ \tau - 1\end{pmatrix}} \D_L \oplus \D_L \oplus \D_L \xrightarrow{d_1 = \begin{pmatrix}\gamma - 1 & 1 - \varphi & 0 \\ \tau - 1 & 0 & 1 - \varphi \\ 0 & \tau - 1 & 1 - \delta^{-1}\gamma\end{pmatrix}} \D_L \oplus \D_L \oplus \D_L \xrightarrow{d_2 = (\tau - 1, 1 - \delta^{-1}\gamma, \varphi - 1)} \D_L \to 0
        \]
        }
        is isomorphic to the Galois cohomology of $V$, where $\delta = \frac{\tau^{\chi(\gamma)} - 1}{\tau - 1}$.
\end{coro}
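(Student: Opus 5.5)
The corollary follows from Theorem~\ref{Overconvergent theorem} by passing to the direct limit over $r\to\infty$. Exactness of filtered colimits and compatibility with transition maps then carry the isomorphisms through.

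\textbf{Step 1: identify the modules.} By Definition~\ref{Definition of families of phi tau modules and their base change}, and because $S\hat\otimes\Bt^{\dagger}_L=\bigcup_r S\hat\otimes\Bt^{\dagger,r}_L$, we have $\D_L=\varinjlim_{r\geq r_0}\D^r_L$. Here the transition maps $\D^r_L\to\D^{r'}_L$ for $r\le r'$ are the natural injections induced by $\Bt^{\dagger,r}_L\subset\Bt^{\dagger,r'}_L$. Concretely, $\D_L$ is the union of the $\D^r_L$, since the $\D^r_L$ are all base changes of the single module $D^{\dagger,r_0}_{\tau,K}(V)$ and tensor products commute with filtered colimits.

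\textbf{Step 2: form the limit of the complexes.} For each $r\ge r_0$, let $C^\bullet_r$ be the four-term complex of Theorem~\ref{Overconvergent theorem}. Its terms are indexed by $r$ or $pr$ and its differentials are built from $\varphi-1$, $\gamma-1$, $\tau-1$ and $1-\delta^{-1}\gamma$. All of these operators are compatible with the inclusions $\D^r_L\hookrightarrow\D^{r'}_L$ and $\D^{pr}_L\hookrightarrow\D^{pr'}_L$, so the $C^\bullet_r$ form a directed system of complexes. Since $\varinjlim_r\D^{pr}_L=\varinjlim_r\D^r_L=\D_L$ (the index sets are cofinal), we get $\varinjlim_r C^\bullet_r$ equal to the complex in the corollary. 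Filtered colimits of modules are exact, so $H^i(\varinjlim_r C^\bullet_r)=\varinjlim_r H^i(C^\bullet_r)$.

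\textbf{Step 3: the isomorphisms are compatible with the transition maps.} Theorem~\ref{Overconvergent theorem} gives isomorphisms $H^i(\G_K,V)\xrightarrow{\sim}H^i(C^\bullet_r)$ for every $r\geq r_0$. Once these are compatible with the transition maps, the colimit system is constant up to isomorphism, and the corollary follows. The compatibility is the main point to check, and I would verify it degree by degree.

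\begin{itemize}
\item For $i=0$, the map is the inclusion $V^{\G_K}\subset\D^r_L$, which is clearly compatible.
\item For $h_1$ and $h_2$, the formulas use the section $s^{(pr)}$ and elements $x,y\in\D^{pr}_L$. If $r\le r'$, I would take $x,y$ chosen for $r$ as the choices for $r'$, which is legitimate because $\eta^{(pr)}$ and $\eta^{(pr')}$ are compatible. Both arise as connecting maps of the sequences $0\to V\to(S\hat\otimes\Bt^{\dagger,(r,r')})\otimes V\to\cdots$, which map to one another.
\item The sections $s^{(pr)}$ and $s^{(pr')}$ may differ on $\D^{pr}_L$. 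Their difference takes values in $V=\ker(\varphi-1)$. Such a change alters the cocycle representatives only by coboundaries with values in $V$. I expect this to change the triples only by elements of $\im d_0$, respectively $\im d_1$, exactly as in the independence-of-choices arguments in Proposition~\ref{Definition of h1} and Section~\ref{sec:well-defined}.
\item The degree $3$ identification, $H^3(\G_K,V)\cong\D^{pr}_L/\im d_2$, is induced by $\eta^{(pr)}$ and is likewise compatible.
\end{itemize}

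A more robust alternative avoids the explicit formulas. The comparison diagram \eqref{eq:comparing} is natural in $r$, and its outer vertical isomorphisms, which come from Propositions~\ref{Explicit Ginfty cohomology} and \ref{Hinfty cohomology}, are visibly compatible with the transition maps. In that case it is enough that $h_1,h_2$ are compatible for some choices, and I would choose them compatibly as above.
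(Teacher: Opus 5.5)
Your proposal is correct and follows the paper's route. The paper obtains the corollary in one line by taking direct limits over $r$ in Theorem~\ref{Overconvergent theorem}; your Steps 1--3 spell out what that line leaves implicit, namely the identification $\D_L=\varinjlim_r \D^r_L$, exactness of filtered colimits, and compatibility of the isomorphisms with the transition maps. That compatibility holds as you expect, and in fact on the nose. Keep the same $x,y$ and replace $s^{(pr)}$ by $s^{(pr')}$, whose difference $\theta$ is $V$-valued. For $h_1$ the cocycle changes by the coboundary $g\mapsto (g-1)\theta(x)$; for $h_2$ it changes by $\partial f$ with $f(\tgamma^a\ttau^b)=-\theta(\frac{\gamma^a-1}{\gamma-1}x+\gamma^a\frac{\tau^b-1}{\tau-1}y)$. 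In both cases the triples defining $h_1$ and $h_2$ are unchanged.
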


\subsection{Completion of the proof of Proposition \ref{H2 fixed under Hinfty}}\label{sec:well-defined}
In this section we are going to show that the map $h_2$ in Proposition \ref{H2 fixed under Hinfty} is well-defined. 

Let us fix $x, y \in \Dt^{\dagger, pr}_L(V)$ congruent to $\eta^{(pr)}\Big(\rho(\alpha)(\gamma)\Big), \eta^{(pr)}\Big(\rho(\alpha)(\tau)\Big)$, respectively, modulo $(\varphi - 1)\Dt^{\dagger, r}_L(V)$.  Let $c_1$ and $c_2 = c_1 + \partial f$ be two representatives for $c$ satisfying all the conditions  of Lemma~\ref{Existence of a c}. Then, $\partial f(g, h) = 0 = \partial f(h, g)$ for all $g \in \G_K, h \in H_{\infty}$. Also, by \eqref{Differences are H-fixed}, $\partial f(g_1,g_2) \in V^{H_{\infty}}$ for all $g_1,g_2 \in \G_K$. A proof similar to that of Lemma~\ref{Second coordinate only matters mod H} shows that the vanishing of $\partial f (h,g)$ implies that 
$\partial f (hg_1,g_2)=h\partial f (g_1,g_2)=\partial f (g_1,g_2)$ which gives that 
$\partial f : G_{\infty} \times G_\infty \to V^{H_{\infty}}$ is a $2$-cocycle, i.e., $\partial f \in H^2(G_{\infty},V^{H_{\infty}})$. 
Let $z_1,z_2$ be the elements obtained using Proposition \ref{H2 fixed under Hinfty} with cocyles $c_1$, $c_2$ respectively. From diagram \ref{eq:comparing}, one can see that $$(-x,-y,-z_1)-(-x,-y,-z_2)=\delta_4 \circ \left(\partial f (\gamma,\gamma^{-1}\tau\gamma)-\partial f(\tau,\gamma)\right).$$
But $\inf(\partial f)=0$ implies $\partial f = \mathrm{tr}(f')$ for some $f' \in H^1(H_\infty,V)^{G_\infty}.$ Therefore, by commutativity of the third square in diagram \ref{eq:comparing}, we obtain that $(-x,-y,-z_1)-(-x,-y,-z_2)=\delta_4 \circ \delta_3 \circ (-\eta^{(pr)})(f')=0.$

\vspace{.2cm}

Next we prove independence of choices of $x$ and $y$. Let $x_1, x_2 \in \Dt^{\dagger, pr}_L(V)$ be congruent to $\eta^{(pr)}\Big(\rho(\alpha)(\gamma)\Big)$ modulo $(\varphi - 1)\Dt^{\dagger, r}_L(V)$ and let $y_1, y_2 \in \Dt^{\dagger, pr}_L(V)$ be congruent to $\eta^{(pr)}\Big(\rho(\alpha)(\tau)\Big)$ modulo $(\varphi - 1)\Dt^{\dagger, r}_L(V)$. In particular, there exist $x, y \in \Dt^{\dagger, r}_L(V)$ such that
\[
    x_1 - x_2 = (\varphi - 1)x \text{ and } y_1 - y_2 = (\varphi - 1)y.
\]

Let $c_1 : \G_K \times \G_K \to V$ be a representative of $\alpha$ satisfying all five conditions in Lemma~\ref{Existence of a c} for $x_1$ and $y_1$. Let $f$ be a continuous function on $\G_K$ factoring through $G_{\infty}$ defined by
\[
    f(\tgamma^a\ttau^b) = s^{(pr)}\left[\frac{\gamma^a - 1}{\gamma - 1}(\varphi - 1)x + \gamma^a \frac{\tau^b - 1}{\tau - 1}(\varphi - 1)y\right] - \left[\frac{\gamma^a - 1}{\gamma - 1}x + \gamma^a\frac{\tau^b - 1}{\tau - 1}y\right].
\]
We claim that the cocycle $c_2 := c_1 + \partial f$ satisfies the conditions of Lemma~\ref{Existence of a c} for $x_2$ and $y_2$.
\begin{itemize}
    \item Given any $g \in \G_K$ and $h \in H_{\infty}$, we see that
    \[
        c_2(g, h) = c_1(g, h) + f(g) - f(gh) + gf(h) = 0.
    \]
    
    \item Given any $h \in H_{\infty}$, we have
    \begin{eqnarray*}
        c_2(h, \tgamma) & = & c_1(h, \tgamma) + f(h) - f(h\tgamma) + hf(\tgamma) \\
        & = & (1 - h)s^{(pr)}x_1 - (1 - h)f(\tgamma) \\
        & = & (1 - h)\left(s^{(pr)}x_1 - s^{(pr)}(\varphi - 1)x + x\right) \\
        & = & (1 - h)s^{(pr)}x_2.
    \end{eqnarray*}
    Similarly,
$c_2(h, \ttau) = (1 - h)s^{(pr)}y_2.$
    
    \item For any $a \in \Zp$, we have
    \begin{eqnarray*}
        c_2(\tgamma^a, \tgamma) & = & c_1(\tgamma^a, \tgamma) + f(\tgamma^a) - f(\tgamma^{a + 1}) + \tgamma^a f(\tgamma) \\
        & = & s^{(pr)}\gamma^a x_1 - \tgamma^a s^{(pr)}x_1 + s^{(pr)}\left[\frac{\gamma^a - 1}{\gamma - 1}(\varphi - 1)x\right] - \frac{\gamma^a - 1}{\gamma - 1}x \\
        & & \qquad - s^{(pr)}\left[\frac{\gamma^{a + 1} - 1}{\gamma - 1}(\varphi - 1)x\right] + \frac{\gamma^{a + 1} - 1}{\gamma - 1}x + \tgamma^a\left\{s^{(pr)}\left[(\varphi - 1)x\right] - x\right\} \\
        & = & s^{(pr)}\gamma^a x_2 - \tgamma^a s^{(pr)}x_2.
    \end{eqnarray*}
    \item For any $b \in \Zp$, we have
    \begin{eqnarray*}
        c_2(\ttau^b, \ttau) & = & c_1(\ttau^b, \ttau) + f(\ttau^b) - f(\ttau^{b + 1}) + \ttau^b f(\ttau) \\
        & = & s^{(pr)}\tau^b y_1 - \ttau^b s^{(pr)}y_1 + s^{(pr)}\left[\frac{\tau^b - 1}{\tau - 1}(\varphi - 1)y\right] - \frac{\tau^b - 1}{\tau - 1}y \\
        & & \qquad - s^{(pr)}\left[\frac{\tau^{b + 1} - 1}{\tau - 1}(\varphi - 1)y\right] + \frac{\tau^{b + 1} - 1}{\tau - 1}y + \ttau^b\left\{s^{(pr)}\left[(\varphi - 1)y\right] - y\right\} \\
        & = & s^{(pr)}\tau^b y_2 - \ttau^b s^{(pr)}y_2.
    \end{eqnarray*}
    \item For any $a, b \in \Zp$, we have
    \begin{eqnarray*}
        c_2(\tgamma^a, \ttau^b) & = & c_1(\tgamma^a, \ttau^b) + f(\tgamma^a) - f(\tgamma^a \ttau^b) + \tgamma^a f(\ttau^b) \\
        & = & s^{(pr)}\gamma^a\frac{\tau^b - 1}{\tau - 1}y_1 - \tgamma^a s^{(pr)}\frac{\tau^b - 1}{\tau - 1}y_1 + s^{(pr)}\left[\frac{\gamma^a - 1}{\gamma - 1}(\varphi - 1)x\right] - \frac{\gamma^a - 1}{\gamma - 1}x \\
        & & \quad - s^{(pr)}\left[\frac{\gamma^a - 1}{\gamma - 1}(\varphi - 1)x + \gamma^a \frac{\tau^b - 1}{\tau - 1}(\varphi - 1)y\right] + \left[\frac{\gamma^a - 1}{\gamma - 1}x + \gamma^a\frac{\tau^b - 1}{\tau - 1}y\right] \\
        & & \qquad + \tgamma^a\left\{s^{(pr)}\left[\frac{\tau^b - 1}{\tau - 1}(\varphi - 1)y\right] - \frac{\tau^b - 1}{\tau - 1}y\right\} \\
        & = & s^{(pr)}\gamma^a \frac{\tau^b - 1}{\tau - 1}y_2 - \tgamma^a s^{(pr)}\frac{\tau^b - 1}{\tau - 1}y_2.
    \end{eqnarray*}
\end{itemize}

This proves our claim. 
As before, let  $z_1,z_2$ be the elements obtained using Proposition \ref{H2 fixed under Hinfty} for cocyles $c_1$, $c_2$ respectively. 
Then the difference $z_1-z_2$ is 
\begin{eqnarray*}
    z_1 - z_2 & = & - \partial f(\tgamma, \tgamma^{-1}\ttau\tgamma)+ \partial f(\ttau, \tgamma)  - (\ttau - 1)s^{(pr)}(\varphi - 1)x + \tgamma s^{(pr)}\left[\frac{\tau^{\chi(\gamma)^{-1}} - 1}{\tau - 1}(\varphi - 1)y\right] - s^{(pr)}(\varphi - 1)y \\
    & = & f(\ttau) + \ttau f(\tgamma) - f(\tgamma) - \tgamma f(\tgamma^{-1}\ttau \tgamma) - (\ttau - 1)s^{(pr)}(\varphi - 1)x \\
    & & \quad + \tgamma s^{(pr)}\left[\frac{\tau^{\chi(\gamma)^{-1}} - 1}{\tau - 1}(\varphi - 1)y\right] - s^{(pr)}(\varphi - 1)y \\
    & = & s^{(pr)}(\varphi - 1)y - y + (\ttau - 1)\left[s^{(pr)}(\varphi - 1)x - x\right] - \tgamma\left[s^{(pr)}\frac{\tau^{\chi(\gamma)^{-1}}-1}{\tau - 1}(\varphi - 1)y - \frac{\tau^{\chi(\gamma)^{-1}}-1}{\tau - 1}y\right] \\
    & & \quad - (\ttau - 1)s^{(pr)}(\varphi - 1)x + \tgamma s^{(pr)}\left[\frac{\tau^{\chi(\gamma)^{-1}} - 1}{\tau - 1}(\varphi - 1)y\right] - s^{(pr)}(\varphi - 1)y \\
    & = & -(\tau - 1)x - (1 - \delta^{-1}\gamma)y.
\end{eqnarray*}
Therefore,
\[
    (-x_2, -y_2, -z_2) - (-x_1, -y_1, -z_1) = ((\varphi - 1)x, (\varphi - 1)y, -(\tau - 1)x - (1 - \delta^{-1}\gamma)y) = -d_1(0, x, y) \in \im d_1.
\]
This finishes the proof of Proposition \ref{H2 fixed under Hinfty} and hence Theorem \ref{Overconvergent theorem}.

\section*{Acknowledgements}
We thank Laurent Berger and Léo Poyeton and Kiran Kedlaya for interesting discussions. This work was supported by the National Research Foundation of Korea (NRF) grant funded by the Korea government (MSIT) (No. RS-2025-02262988 and No. RS-2025-00517685). The first author also gratefully acknowledges the support received from the HRI postdoctoral fellowship, where a part of this work was completed. The second author would like to thank Harish Chandra Research Institute for its hospitality during his visit to the institute where parts of this work were completed. The third author gratefully acknowledges support from Inspire Research Grant, DST, Govt. of India.

\printbibliography

\end{document}